\newcommand{\CM}{Cohen-Macaulay}
\newcommand{\fA}{\mathfrak{a}}
\newcommand{\eC}{\EuScript{C}}
\newcommand{\Bf}{\mathbf{f} }
\newcommand{\bx}{\mathbf{x} }
\newcommand{\n}{\mathfrak{n} }
\newcommand{\m}{\mathfrak{m} }
\newcommand{\q}{\mathfrak{q} }
\newcommand{\Z}{\mathbb{Z} }
\newcommand{\Q}{\mathbb{Q} }
\newcommand{\cV}{\mathcal{V}}
\newcommand{\T}{\mathcal{T} }
\newcommand{\N}{\mathbb{N} }
\newcommand{\R}{\mathcal{R} }
\newcommand{\Cc}{\mathcal{C} }
\newcommand{\Kc}{\mathcal{K} }
\newcommand{\Fb}{\mathbb{F} }
\newcommand{\Gb}{\mathbb{G} }
\newcommand{\rt}{\rightarrow}
\newcommand{\ov}{\overline}
\newcommand{\Om}{\Omega}
\newcommand{\wt}{\widetilde }
\newcommand{\image}{\operatorname{image}}
\newcommand{\Tr}{\operatorname{Tr}}
\newcommand{\codim}{\operatorname{codim}}
\newcommand{\depth}{\operatorname{depth}}
\newcommand{\ann}{\operatorname{ann}}
\newcommand{\cx}{\operatorname{cx}}
\newcommand{\rank}{\operatorname{rank}}
\newcommand{\curv}{\operatorname{curv}}
\newcommand{\Ass}{\operatorname{Ass}}
\newcommand{\htt}{\operatorname{height}}
\newcommand{\proj}{\operatorname{proj}}
\newcommand{\Spec}{\operatorname{Spec}}
\newcommand{\CMS}{\operatorname{\underline{CM}}}
\newcommand{\CMa}{\operatorname{CM}}
\newcommand{\Gau}{\underline{\Gamma}}
\newcommand{\projdim}{\operatorname{projdim}}
\newcommand{\Hom}{\operatorname{Hom}}
\newcommand{\md}{\operatorname{mod}}
\newcommand{\mds}{\operatorname{\underline{mod}}}
\newcommand{\sHom}{\operatorname{\underline{Hom}}}
\newcommand{\Ext}{\operatorname{Ext}}
\newcommand{\Tor}{\operatorname{Tor}}
\theoremstyle{plain}
\newtheorem{theorem}{Theorem}[section]
\newtheorem{corollary}[theorem]{Corollary}
\newtheorem{lemma}[theorem]{Lemma}
\newtheorem{proposition}[theorem]{Proposition}
\newtheorem{conjecture}[theorem]{Conjecture}
\theoremstyle{definition}
\newtheorem{definition}[theorem]{Definition}
\newtheorem{remark}[theorem]{Remark}
\newtheorem{construction}[theorem]{Construction}
\theoremstyle{remark}
\begin{document}

\title[Stable category]{On the Stable category of maximal Cohen-Macaulay modules over Gorenstein rings }
\author{Tony~J.~Puthenpurakal}
\date{\today}
\address{Department of Mathematics, IIT Bombay, Powai, Mumbai 400 076}

\email{tputhen@math.iitb.ac.in}
\subjclass{Primary  13C14,  13D09 ; Secondary 13C60, 13D02}
\keywords{stable category of Gorenstein rings, complete intersections, support varieties, Hensel rings  periodic complexes, Auslander-Reiten triangles}

 \begin{abstract}
Let $(A,\m), (B,\n) $ be  Gorenstein local rings  and let $\CMS(A)$ be its stable category of finitely generated maximal \CM \ $A$-modules. Suppose $\CMS(A) \cong \CMS(B)$ as triangulated categories. Then we show
\begin{enumerate}
  \item  If $A$ is an abstract complete intersection of codimension $c$ then so is $B$.
  \item If $A, B$ are Henselian and not hypersurfaces then $\dim A = \dim B$.
  \item  If $A$ (and so $B$)  are complete hypersurface  singularity and multiplicity of $A$ is at least three then $\dim A - \dim B$ is even.
  \item If $A, B$ are Henselian and $A$ is an isolated singularity then so is $B$.
\end{enumerate}
We also give some applications of our results.  It should be remarked that
if
 $R,S$ are complete \CM \ but not necessarily Gorenstein  and if there is an triangle isomorphism between the singularity categories of $R$ and $S$ then it is possible that $\dim R - \dim S$ is odd, see  M.~Kalck; Adv. Math. 390 (2021), Paper No. 107913.
\end{abstract}
 \maketitle
\section{introduction}

Representation theory of Artin Algebras is a well established branch of mathematics. Auslander discovered that many concepts in representation theory of Artin algebras have
natural analogues in the study of maximal \CM \ ( = MCM)  modules of a commutative \CM \  local ring $A$. See \cite{Y} for a nice exposition of these ideas. In this paper we study analogues of a natural concept in theory of Artin algebras in the study of MCM modules over commutative Gorenstein local ring.

Let $R$ be a commutative Artin ring and let $\Lambda$ be a not-necessarily commutative Artin $R$-algebra. Let $\mds(\Lambda)$ denote the stable category of $\Gamma$.
The study of equivalences of stable categories of Artin $R$-algebras has a rich history; see \cite[Chapter X]{ARS}.  By following Auslander's idea  we investigate equivalences of the stable
category of MCM modules over commutative \CM \ local rings. If $A$ is Gorenstein local then the stable category of MCM $A$-modules has a triangulated structure. So as a first
iteration in this program, in this paper we investigate triangle equivalences of stable categories of MCM modules over commutative Gorenstein rings.

 In general,  stable category of Artin algebras (over a commutative Artin ring) need not be triangulated. So by an equivalence we just mean as equivalence of additive categories over a commutative Artin ring.  If $\Gamma$ is a self-injective Artin $R$-algebra then $\mds(\Gamma)$ is triangulated.  Furthermore if
$f \colon \mds(\Gamma) \rt \mds(\Lambda)$ is an equivalence of additive categories
then $f$ commutes with the shift functor on the corresponding triangulated categories; see \cite[1.12, Chapter X]{ARS}. However for the best  of our knowledge  this isomorphism is \emph{NOT
natural}. So it does not follow that $f$ is an equivalence of triangulated categories.

If two finite dimensional self-injective algebra's over a field are derived equivalent then they are also stably equivalent by a result of Rickard, see \cite[2.2]{R2}.  Another result of Rickard's shows that if two rings are derived equivalent  then their center's are isomorphic, see \cite[9.2]{R1}. Thus derived equivalences
of commutative rings are not interesting.

Let $(A,\m)$ be a  commutative Gorenstein local ring with residue field $k$. Let $\CMa(A)$ denote the full subcategory of finitely generated  MCM $A$-modules  and let $\CMS(A)$ denote the stable category of MCM $A$-modules. Recall that objects in $\CMS(A)$ are same as objects in $\CMa(A)$. However the set of morphisms $\underline{\Hom_A}(M,N)$ between $M$ and $N$ is  $= \Hom_A(M,N)/P(M,N)$ where $P(M,N)$ is the set of $A$-linear maps from $M$ to $N$ which factor through a finitely generated free module. It is well-known that $\CMS(A)$ is  a triangulated category with translation functor $\Omega^{-1}$,  (see \cite{Bu}; cf. \ref{T-struc}).  Here $\Omega(M)$ denotes the syzygy of $M$ and $\Omega^{-1}(M)$ denotes the co-syzygy of $M$. Also recall that an object $M$ is zero in $\CMS(A)$ if and only if it is free  considered as an $A$-module. Furthermore $M \cong N$ in $\CMS(A)$ if and only
if there exists finitely generated free modules $F,G$ with $M\oplus F \cong N \oplus G$ as $A$-modules.  If $A$ is regular local then all MCM
modules are free. So $\CMS(A) = 0$.

We use Neeman's book \cite{N} for notation on triangulated categories. However we will assume that if $\mathcal{C}$ is a triangulated category then $\Hom_\mathcal{C}(X, Y)$ is a set for any objects $X, Y$ of $\mathcal{C}$.

Let $R$ be a commutative  Noetherian  ring. Let $D^b(R)$ be the bounded derived category of $R$. A complex $X$  of $R$-modules is said to  be perfect if it is isomorphic in $D^b(R)$ to  a bounded complex of finitely generated projective $R$-modules. The subcategory $D^b_{perf}(R)$ of perfect complexes  is equal to $[R]$ the thick hull of $R$ considered as a subcategory of $D^b(R)$ (see \cite[1.2.1]{Bu}). The quotient category $D^b(R)/[R]$ is called the singularity category of $R$ and is denoted by $D_{sg}(R)$. Note if $R$ is  of finite Krull dimension then $D_{sg}(R) = 0$ if and only if $R$ is a regular ring. Buchweitz proved that if $(R,\n)$ is a Gorenstein local ring then $\CMS(R) \cong D_{sg}(R)$ (see \cite[4.4.1]{Bu}).

\textbf{I} \emph{ Properties  invariant under stable equivalences:}

\textbf{I (a):}    We say a local ring $A$ is a \emph{geometric complete intersection}  if $A = Q/(f_1,\ldots, f_c)$ where $Q$ is regular local and $f_1,\ldots, f_c$ is a $Q$-regular sequence. We say $A$ is an \emph{abstract complete intersection} if the completion $\widehat{A}$ is a geometric complete intersection.    Geometric complete intersections are abstract complete intersections but the converse is \emph{not} true, see  \cite[section 2]{HJ}.

Our first result is:
\begin{theorem}\label{theorem-ci}
Let $A, B$ be Gorenstein local rings. Assume $\CMS(A)$ is triangle equivalent to $\CMS(B)$. If $A$ is an abstract complete intersection of co-dimension $c$ then $B$ is also an abstract complete intersection of co-dimension $c$.
\end{theorem}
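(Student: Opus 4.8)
The plan is to read off the codimension of a Gorenstein local ring $A$ from the triangulated category $\CMS(A)$ by detecting the complexity $\cx_A(k)$ of the residue field: by a theorem of Gulliksen this is finite exactly when $A$ is a complete intersection, and then it equals $\codim A$. The case $c=0$ is immediate, since then $A$ is regular, $\CMS(A)=0$, hence $\CMS(B)=0$ and $B$ is regular; so assume $c\ge 1$ and $A$ is not regular.

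The difficulty is that a triangle equivalence $F\colon\CMS(A)\rt\CMS(B)$ need not carry the distinguished object $M_A:=\Omega^{\dim A}_A(k)$ of $\CMS(A)$ to the analogous object over $B$, only to some a priori unstructured maximal \CM\ $B$-module. So I would isolate a categorical property and a categorical invariant that recover the relevant data from $M_A$. Call an object $X$ of $\CMS(C)$ \emph{locally finite} if the ring $\operatorname{End}_{\CMS(C)}(X)$ is Artinian; since $\operatorname{End}_{\CMS(C)}(X)=\underline{\operatorname{End}}_C(X)$ is module-finite over $C$, this is equivalent to $X$ being free on the punctured spectrum of $C$, and it is preserved by triangle equivalences. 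For locally finite $X$ all the groups $\Hom_{\CMS(C)}(X,\Omega^{-n}X)\cong\widehat{\Ext}^n_C(X,X)$ have finite length over $C$ (they vanish after localizing at a nonmaximal prime, where $X$ becomes free), hence finite length over $\operatorname{End}_{\CMS(C)}(X)$; the two lengths differ only by a bounded multiplicative factor, because the endomorphism ring is module-finite over the local ring $C$ and so its simple modules are finite-dimensional over the residue field. Since $\widehat{\Ext}^n_C(X,X)=\Ext^n_C(X,X)$ for $n\gg0$, the growth rate of the manifestly categorical sequence $n\mapsto\operatorname{length}_{\operatorname{End}_{\CMS(C)}(X)}\Hom_{\CMS(C)}(X,\Omega^{-n}X)$ equals that of $\operatorname{length}_C\Ext^n_C(X,X)$; write $\cx_C(X,X)$ for the corresponding complexity. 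Thus ``$X$ is locally finite'' and, for such $X$, the number $\cx_C(X,X)$ are both invariant under $F$.

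I would then run the comparison in both directions. The object $M_A$ is locally finite (indeed $k_\mathfrak{p}=0$ for $\mathfrak{p}\neq\m$) and, by shift-invariance of Tate cohomology together with $\cx_A(k)=\codim A=c$, satisfies $\cx_A(M_A,M_A)=c$. Hence $N_B:=F(M_A)$ is locally finite over $B$ with $\cx_B(N_B,N_B)=c$. Symmetrically, $N_A:=F^{-1}\bigl(\Omega^{\dim B}_B(l)\bigr)$, with $l$ the residue field of $B$, is locally finite over $A$ with $\cx_A(N_A,N_A)=\cx_B(l)$. Because $A$ is a complete intersection of codimension $c$, Gulliksen's finiteness of Ext over a complete intersection — the total Ext module is finitely generated over a polynomial ring in $c$ degree-two variables — forces $\cx_A(N_A,N_A)\le c$, so $\cx_B(l)\le c<\infty$; by Gulliksen's theorem $B$ is then a complete intersection with $\codim B=\cx_B(l)\le c$. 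Applying the same Ext-finiteness over the complete intersection $B$ to the locally finite module $N_B$ gives $\cx_B(N_B,N_B)\le\codim B$, i.e.\ $c\le\codim B$. Therefore $\codim B=c$, as claimed.

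The step I expect to be the crux is the construction in the second paragraph: pinning down a numerical invariant of $\CMS(A)$ equal to $\cx_A(k)$ that is visibly preserved by an abstract triangle equivalence. Two points need care: encoding ``free on the punctured spectrum'' purely categorically (as Artinian-ness of the internal endomorphism ring), and, after transporting this property along $F$, recovering that $\operatorname{End}_{\CMS(B)}(N_B)$ has finite length \emph{over $B$} — which holds because it is module-finite over the local ring $B$ — so that $N_B$ is again free on the punctured spectrum and $\cx_B(N_B,N_B)$ is a genuine complexity to which Gulliksen's criterion applies. The remaining inputs (Gulliksen's two theorems, shift-invariance of Tate cohomology, and the compatibility of complexity with completion) are standard, and are used here as black boxes.
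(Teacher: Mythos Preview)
Your argument is correct and takes a genuinely different route from the paper. The paper proves the theorem via a ``slice'' construction (Theorem~\ref{slice}): using Eisenbud operators it produces, for any MCM $A$-module $M$ of complexity $\geq 2$, an exact triangle $K \to \Omega^{n+2s}M \to \Omega^n M \to \Omega^{-1}K$ with $\cx K = \cx M - 1$; feeding this through an arbitrary triangulated functor $f\colon \CMS(A)\to\CMS(B)$ and tracking Betti numbers via Lemma~\ref{betti-stable} shows $\cx_B f(M)\leq \cx_A M$ (Theorem~\ref{basic}), and then Gulliksen's criterion finishes. Your approach bypasses the slice machinery entirely: you encode ``free on the punctured spectrum'' categorically via Artinian-ness of the stable endomorphism ring, and read off $\cx_A(k)$ from the growth of $\Hom_{\CMS(A)}(M_A,\Sigma^n M_A)$, which transports directly under the equivalence. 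One small point you leave implicit: to pass from Gulliksen's bound on $\mu(\Ext^n)$ to a bound on $\ell_A(\Ext^n)$ you need that some fixed $\m^s$ annihilates $\Ext^{\geq 1}_A(N_A,N_A)$; this does follow from finite generation over $A[t_1,\dots,t_c]$ together with finite length of each graded piece, but it deserves a sentence. The paper's method buys more --- Theorem~\ref{basic} holds for \emph{any} triangulated functor, not just equivalences, and the slice construction also drives the periodicity results in Section~3 --- whereas your argument is shorter and more intrinsically categorical for this particular statement.
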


Let $(A,\m)$ be an  abstract complete intersection of co-dimension $c$. For $1 \leq r \leq c$ let $\CMS_{\leq r}(A)$ be the thick subcategory of $\CMS(A)$ generated by MCM $A$-modules with complexity $\leq r$ (for definition of complexity see \ref{bella-rangella}).
If $A$ and $B$ are abstract complete intersections with infinite residue fields then it essentially follows from techniques used to prove \ref{theorem-ci} that if $\CMS_{\leq r}(A)$ is triangle equivalent to $\CMS_{\leq s}(B)$ then necessarily $r = s$. However the following result is a bit un-expected:
\begin{theorem}\label{cm-r}
Let $(A,\m)$, $(B,\n)$ be geometric complete intersections of \\ co-dimension $m,n \geq 2$ respectively and with algebraically closed residue fields. Let  \\
$r \geq \max\{ m/2, n/2 \}$ and also $r \leq \min\{ m, n \}$. If $\CMS_{\leq r}(A)$ is triangle equivalent to $\CMS_{\leq r}(B)$ then necessarily
 $m = n$.
\end{theorem}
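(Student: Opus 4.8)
The plan is to extract from the triangle equivalence the lattice of thick subcategories on each side, identify that lattice by support-variety theory with a purely combinatorial object -- the poset of small-dimensional subvarieties of a projective space -- and then read off the codimension from an intersection-theoretic invariant of that poset; the hypothesis $r\ge\max\{m/2,n/2\}$ is precisely what makes the last step work.

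First recall (Avramov--Buchweitz) that over a geometric complete intersection $A$ of codimension $m$ every $M\in\CMa(A)$ has a support variety $V_A(M)$, a conical closed subvariety of $\mathbb{A}^m_k$ with $\dim V_A(M)=\cx_A(M)$; write $\mathbb{P}V_A(M)\subseteq\mathbb{P}^{m-1}_k$ for its projectivization, so $\dim\mathbb{P}V_A(M)=\cx_A(M)-1$. Since $V_A(-)$ is additive on sums, invariant under $\Omega$, and subadditive on triangles, the full subcategory $\{M:\cx_A M\le r\}$ is thick; it contains the generators of $\CMS_{\le r}(A)$, hence equals $\CMS_{\le r}(A)$. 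Next, by the classification of thick subcategories of the singularity category of a complete intersection (Stevenson, building on the tensor-triangular techniques of Hopkins--Neeman--Thomason together with the Avramov--Buchweitz support machinery), the thick subcategories of $\CMS(A)=D_{sg}(A)$ correspond order-isomorphically to specialization-closed subsets of a Noetherian space $\mathbb{V}_A$, with the subcategory supported on a closed $Z$ generated by the $M$ with $\mathbb{P}V_A(M)\subseteq Z$ (realizability, where $k$ algebraically closed is used); when $A$ has an isolated singularity $\mathbb{V}_A=\mathbb{P}^{m-1}_k$. Thus $\CMS_{\le r}(A)$ is the subcategory supported on $W^A_r:=\{x\in\mathbb{P}^{m-1}_k:\dim\overline{\{x\}}\le r-1\}$, and its lattice of thick subcategories is the lattice of specialization-closed subsets of $W^A_r$, i.e. the lattice of down-sets of the poset $P^A_r$ of irreducible subvarieties of $\mathbb{P}^{m-1}_k$ of dimension $\le r-1$, ordered by inclusion.

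A triangle equivalence $\CMS_{\le r}(A)\simeq\CMS_{\le r}(B)$ induces an isomorphism of lattices of thick subcategories, hence of lattices of down-sets of $P^A_r$ and $P^B_r$; since any poset is recovered from its lattice of down-sets as the sub-poset of completely join-irreducible elements (the principal down-sets), we get a poset isomorphism $P^A_r\cong P^B_r$. It remains to show $P^A_r$ determines $m$ when $r\le m\le 2r$. If $r=1$ then $m=n=2$ and we are done, so assume $r\ge 2$. If $P^A_r$ has a greatest element, then $\mathbb{P}^{m-1}_k$ has dimension $\le r-1$, i.e. $m=r$; being order-invariant, this forces $n=r=m$. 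Otherwise $P^A_r$ has infinitely many maximal elements, namely the subvarieties of dimension exactly $r-1$: for distinct maximal $X_1\neq X_2$, whether they admit a common lower bound means $X_1\cap X_2\neq\emptyset$, and the maximal rank of such a lower bound is $\dim(X_1\cap X_2)$, both order-theoretic notions. By the dimension theorem for intersections in $\mathbb{P}^{m-1}_k$, two subvarieties of dimension $r-1$ always meet when $2(r-1)\ge m-1$, every component of their intersection then has dimension $\ge 2r-m-1$, and a generic pair of $(r-1)$-planes achieves equality, while for $m=2r$ there are disjoint $(r-1)$-planes. Hence the order-invariant pair
\[
\bigl(\,[\text{every distinct maximal pair has a common lower bound}],\ \min\{\dim(X_1\cap X_2):X_1\neq X_2\text{ maximal, }X_1\cap X_2\neq\emptyset\}\,\bigr)
\]
equals $(\mathrm{true},\,2r-m-1)$ when $r<m\le 2r-1$ and $(\mathrm{false},\,0)$ when $m=2r$; in every case it determines $m$ given $r$. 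Applying it to $P^A_r\cong P^B_r$ gives $m=n$.

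The delicate point, and the one I expect to be the main obstacle, is dropping the isolated-singularity assumption. For a general geometric complete intersection of codimension $m$, Stevenson's space $\mathbb{V}_A$ is strictly larger than $\mathbb{P}^{m-1}_k$: it maps to $\Spec A$ with fibre over the closed point equal to $\mathbb{P}^{m-1}_k$ (since $\cx_A k=m$ always), but with extra "base" directions coming from $\operatorname{Sing} A$, so $W^A_r$ acquires a component depending on $A$, not merely on $m$. One must check that $m$ is still recoverable from $P^A_r=(W^A_r,\subseteq)$ -- the natural route is to characterize the closed-point fibre $\mathbb{P}^{m-1}_k$ intrinsically inside $W^A_r$ (e.g. as the set of elements all of whose generizations within $W^A_r$ remain "vertical") and then run the intersection-theoretic invariant of the previous paragraph on the maximal elements lying in that fibre. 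Carrying this out, together with verifying realizability of support varieties over $\mathbb{V}_A$ in the required generality over an algebraically closed field, is where the real work lies; once the support space is pinned down, the combinatorics is routine.
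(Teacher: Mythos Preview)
Your approach via Stevenson's thick-subcategory classification is genuinely different from the paper's, and considerably heavier. The paper argues directly: assuming $m \geq n+1$, choose disjoint linear subspaces $X, Y \subset \mathbb{P}^{m-1}$ with $\dim X = r-1$ and $\dim Y = m-r-1$, and realize them (Bergh) as support varieties $\cV^*(M), \cV^*(N)$ of MCM $A$-modules. Since $r \geq m/2$ one has $\cx_A N = m-r \leq r$, so both $M, N$ lie in $\CMS_{\leq r}(A)$. From $\cV^*(M,N) = X \cap Y = \emptyset$ one gets $\Ext^i_A(M,N) = 0$ for $i \gg 0$, hence $\sHom_B(\psi M, \Omega_B^{-i}\psi N) = 0$ for $i \gg 0$, hence $\cV^*(\psi M) \cap \cV^*(\psi N) = \emptyset$ in $\mathbb{P}^{n-1}$. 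But the equivalence preserves complexity (proved earlier in the paper by an elementary induction), so these two varieties have dimensions $r-1$ and $m-r-1$; their sum is $m-2 \geq n-1$, and the projective dimension theorem forces a nonempty intersection in $\mathbb{P}^{n-1}$ --- contradiction.

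This direct route has two concrete advantages over yours. First, Avramov--Buchweitz support theory and Bergh's realizability hold for \emph{arbitrary} geometric complete intersections, so no isolated-singularity hypothesis is needed and the gap you correctly flagged simply does not arise. Second, the paper uses only that a triangle equivalence preserves $\sHom$ (automatic) and complexity (elementary); your argument needs the entire lattice of thick subcategories of $\CMS_{\leq r}(A)$ to be governed by support, and you should check carefully that Stevenson's theorem in this setting classifies \emph{all} thick subcategories rather than only those closed under the $D^{perf}(\mathbb{P}^{c-1})$-action, since your equivalence is merely triangulated, not monoidal. Your poset-combinatorial extraction of $m$ from $P^A_r$ is correct and attractive, but the paper reaches the same intersection-theoretic punchline with a single well-chosen pair of modules rather than the whole lattice; in particular your proposed fix for the non-isolated case (isolating the closed-point fibre inside $W^A_r$ order-theoretically) is unnecessary once one argues this way.
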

Proof of Theorem \ref{cm-r} uses techniques developed by Avramov and Buchweitz in \cite{avr-b}.
We make the following:
\begin{conjecture}
Let $(A,\m)$, $(B,\n)$ be geometric complete intersections of \\ co-dimension $m,n$ respectively and with algebraically closed residue fields. Let  \\
$r \leq \min\{ m, n \}$. If $\CMS_{\leq r}(A)$ is triangle equivalent to $\CMS_{\leq r}(B)$ then necessarily
 $m = n$.
\end{conjecture}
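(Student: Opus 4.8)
This is stated as a conjecture; below is the attack I would attempt, together with the point at which I expect it to stall. The plan is to reduce everything to incidence geometry of support varieties à la Avramov--Buchweitz \cite{avr-b}. Write $A=Q/(f_1,\dots,f_m)$ and $B=Q'/(g_1,\dots,g_n)$ with $Q,Q'$ regular local and the $f_i,g_j$ regular sequences. Each MCM $A$-module $M$ carries a support variety $V_A(M)\subseteq\mathbb{A}^m_k$, a closed cone with $\dim V_A(M)=\cx_A(M)\le m$; for MCM $M,N$ the support of the total stable $\Ext$ is $V_A(M)\cap V_A(N)$, and its dimension is the growth rate of the sequence $\bigl(\Hom_{\CMS(A)}(M,\Omega^{-n}N)\bigr)_{n\ge0}$ — in particular it can be read off the triangulated category $\CMS(A)$ (this is the input from \cite{avr-b} that already drives Theorem~\ref{theorem-ci}). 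Since complexity is unchanged by syzygy and $V_A(Y)\subseteq V_A(X)\cup V_A(Z)$ along a triangle $X\to Y\to Z\to\Omega^{-1}X$, the MCM modules of complexity $\le r$ already form a thick subcategory, so $\CMS_{\le r}(A)=\{M:\cx_A(M)\le r\}$; and, $k$ being algebraically closed, the realization theorem of \cite{avr-b} provides, for every closed cone $W\subseteq\mathbb{A}^m_k$ of dimension $\le r$, an MCM module $M$ with $V_A(M)=W$, and such $M$ lies in $\CMS_{\le r}(A)$.

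Given a triangle equivalence $F\colon\CMS_{\le r}(A)\to\CMS_{\le r}(B)$ I would first transport the crude numerical invariant. As in Theorem~\ref{theorem-ci}, $F$ preserves complexity; since $r\le\min\{m,n\}$ the top complexity attained in either category is $r$ (realized by modules with linear support varieties), so the top-complexity objects correspond under $F$, and for two of them the growth of $\bigl(\Hom_{\CMS(A)}(M,\Omega^{-n}N)\bigr)_n=\dim\bigl(V_A(M)\cap V_A(N)\bigr)$ agrees with that of $\bigl(\Hom_{\CMS(B)}(FM,\Omega^{-n}FN)\bigr)_n=\dim\bigl(V_B(FM)\cap V_B(FN)\bigr)$. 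Hence the integer
\[
\delta(A):=\min\bigl\{\,\dim\bigl(V_A(M)\cap V_A(N)\bigr):\cx_A(M)=\cx_A(N)=r\,\bigr\}
\]
satisfies $\delta(A)=\delta(B)$. By the projective dimension theorem in $\mathbb{P}^{m-1}_k$ any two $r$-dimensional cones in $\mathbb{A}^m_k$ meet in dimension $\ge\max\{0,2r-m\}$, and two generic $r$-dimensional linear subspaces (available by realization) meet in exactly that dimension, so $\delta(A)=\max\{0,2r-m\}$ and $\delta(B)=\max\{0,2r-n\}$. Therefore $\max\{0,2r-m\}=\max\{0,2r-n\}$, which forces $m=n$ unless $2r\le m$ and $2r\le n$; in particular this already recovers Theorem~\ref{cm-r} (and a bit more).

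The remaining, and genuinely hard, case is $r\le\tfrac12\min\{m,n\}$: then $\delta(A)=\delta(B)=0$, and a single intersection dimension sees nothing of the ambient dimensions. Here I would pass to the full lattice $\operatorname{Thick}\bigl(\CMS_{\le r}(A)\bigr)$. Granting a Stevenson-type classification for the complete intersections at hand — for isolated singularities this lattice is the poset of specialization-closed subsets of $\mathbb{P}^{m-1}_k$ of dimension $\le r-1$, i.e. the incidence combinatorics of dimension-$(r-1)$ subvarieties of $\mathbb{P}^{m-1}$ — the conjecture becomes the statement that this poset determines $m$. For $r\ge2$ some of the combinatorics is visible (e.g. whether two given irreducible top-dimensional members are forced to intersect separates small $m$), but I do not see that pure incidence pins down $m$ in general; already for $r=1$ the lattice remembers only the cardinality of $k$ and the fact that $m\ge2$, so a further ingredient is unavoidable, and finding it is where I expect the main obstacle to lie. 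Two concrete difficulties must be faced: (i) the residue fields of $A$ and $B$, though algebraically closed, need not be isomorphic, so no cardinality-type feature of the poset is automatically preserved and one must argue at the level of the $k$-linear triangulated structure; and (ii) for non-isolated singularities the space replacing $\mathbb{P}^{m-1}$ is more intricate, so one should either prove the needed classification in that generality or first reduce to the isolated case. A plausible refinement is to track how the ``hypersurface fibres'' — the thick subcategories generated by modules whose support is a fixed line in $\mathbb{A}^m$, which behave like stable categories of hypersurfaces — sit inside $\CMS_{\le r}(A)$ and interact, and to feed the results (2)--(4) of the abstract back in; making such a fibrewise analysis recover $m$ uniformly in $r$ is the crux.
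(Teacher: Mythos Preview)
This statement is posed in the paper as an open conjecture; there is no proof to compare against, and you rightly present an attack that stalls. Your first two paragraphs reproduce the paper's partial result (Theorem~\ref{cm-r}) by the same mechanism---realize prescribed support cones via \cite{bergh}, then invoke the projective intersection bound in $\mathbb{P}^{m-1}$---packaged into the symmetric invariant $\delta(A)=\max\{0,2r-m\}$ rather than the paper's asymmetric choice of a pair with $(\cx M,\cx N)=(r,m-r)$ and disjoint projective supports.

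One step needs tightening. You assert that $\dim\bigl(V_A(M)\cap V_A(N)\bigr)=\cx_A(M,N)$ ``can be read off the triangulated category'' as the growth rate of $\bigl(\sHom_A(M,\Omega^{-i}N)\bigr)_i$. A triangle equivalence yields only \emph{abelian-group} isomorphisms of these Hom sets; with residue fields possibly of different cardinalities there is no intrinsic size by which to measure growth, so the \emph{positive} values of $\delta$ are not obviously categorical, and hence your ``and a bit more'' beyond Theorem~\ref{cm-r} is not yet earned. The paper sidesteps this by using only the \emph{vanishing} $\Ext_A^{\gg0}(M,N)=0$---which is categorical---together with preservation of complexity (Theorem~\ref{basic}); your argument is easily repaired the same way, and then it delivers exactly the range $r\ge\max\{m/2,n/2\}$. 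Your diagnosis of the genuine obstacle when $r\le\tfrac{1}{2}\min\{m,n\}$---already visible at $r=1$, where the thick-subcategory combinatorics of $\CMS_{\le1}$ carries essentially no information about $m$---is on target, and the paper offers no further progress in that regime.
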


\textbf{I (b):}

Our next result shows that in many cases stable equivalence preserves dimension.
\begin{theorem}\label{dim}
Let $(A,\m), (B, \n)$ be Henselian Gorenstein  with $\CMS(A)$ triangle equivalent to $\CMS(B)$.
Then
\begin{enumerate}[\rm (1)]
\item
If $A$ is not a hypersurface ring then $\dim A = \dim B$.
\item
If $A$ is a hypersurface ring having an MCM $A$-module $M$ with $\Omega(M) \ncong M$ then $\dim A - \dim B$ is even.
\end{enumerate}
\end{theorem}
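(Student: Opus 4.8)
The plan is to recover $\dim A$ from the Auslander--Reiten theory of the triangulated category $\CMS(A)$; throughout I write $\Omega^{-1}$ for its shift functor. The first step is to make the hypotheses intrinsic. Recall that a Gorenstein local ring $A$ is a hypersurface if and only if $\Omega^{2}X\cong X$ in $\CMS(A)$ for every object $X$: the forward implication is Eisenbud's theorem on matrix factorizations, and for the converse, applying the condition to $X=\Omega^{\dim A}(k)$ shows that the Betti numbers of $k$ are eventually $2$-periodic, hence bounded, so $A$ is a hypersurface by a theorem of Gulliksen. Thus ``$\Omega^{2}X\cong X$ for all $X$'', and likewise ``$\Omega X\cong X$ for all $X$'', are properties of the triangulated category $\CMS(A)$. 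Combining this with Theorem~\ref{theorem-ci}: in case~(1) neither $A$ nor $B$ is a hypersurface (in particular neither is regular, so $\CMS(A)\cong\CMS(B)\neq 0$); in case~(2) both $A$ and $B$ are hypersurfaces, and both possess an MCM module not isomorphic to its syzygy.

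Next I would single out the full subcategory $\mathcal{C}_A\subseteq\CMS(A)$ of MCM modules locally free on the punctured spectrum. This is thick, and for $A$ Henselian Gorenstein it is precisely the thick closure of the class of objects of $\CMS(A)$ admitting an Auslander--Reiten triangle: an indecomposable non-free MCM module has an AR triangle in $\CMS(A)$ exactly when it is locally free on the punctured spectrum (Auslander's existence theorem for AR sequences over Gorenstein rings), and the existence and shape of an AR triangle are preserved by triangle equivalences. Hence any triangle equivalence $F\colon\CMS(A)\to\CMS(B)$ restricts to a triangle equivalence $\mathcal{C}_A\to\mathcal{C}_B$. On $\mathcal{C}_A$ every indecomposable non-free object $X$ fits in an AR triangle $\tau X\to E\to X\to\Omega^{-1}(\tau X)$, and a short computation with the Auslander transpose over a Gorenstein ring --- via $\mathrm{Tr}\,M\cong\Omega^{-2}(M^{*})$ and $(\Omega^{j}N)^{*}\cong\Omega^{-j}(N^{*})$ --- identifies $\tau X\cong\Hom_A(\Omega^{d}\,\mathrm{Tr}\,X,A)\cong\Omega^{2-d}(X)$, where $d=\dim A$ (equivalently, $\mathcal{C}_A$ is a $(d-1)$-Calabi--Yau category).

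Now the dimension can be extracted. Put $X_A=\Omega^{d}_A(k)$, an object of $\mathcal{C}_A$ that is MCM and, since $A$ is not regular, nonzero in $\CMS(A)$. In case~(1), $A$ is not a hypersurface, so the Betti numbers of $k$ --- hence of $X_A$ --- are unbounded; it follows that some non-free indecomposable summand $X$ of $X_A$ is non-periodic, i.e.\ $\Omega^{j}X\cong X$ only for $j=0$ (otherwise $X_A$ would have bounded Betti numbers). Applying $F$ to the AR triangle ending at $X$ and comparing with the AR triangle ending at $F(X)$ in $\CMS(B)$, which by the same formula has the shape $\Omega^{2-d_B}(F(X))\to\cdot\to F(X)\to\cdot$ with $d_B=\dim B$, uniqueness of AR triangles gives $\Omega^{2-d_A}(F(X))\cong\Omega^{2-d_B}(F(X))$, so $\Omega^{d_B-d_A}(F(X))\cong F(X)$; as $F(X)$ is non-periodic, $d_A=d_B$. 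In case~(2) both rings are hypersurfaces, so $\Omega^{2}\cong\mathrm{id}$ on the whole category; granting an object $X\in\mathcal{C}_A$ with $\Omega X\not\cong X$, the same comparison yields $\Omega^{d_B-d_A}(F(X))\cong F(X)$ together with $\Omega^{2}F(X)\cong F(X)$ but $\Omega F(X)\not\cong F(X)$, whence $d_A-d_B$ is even.

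The step I expect to be the main obstacle is the ``granting'' in case~(2): one must produce an object of $\mathcal{C}_A$ not fixed by $\Omega$, equivalently show that for a hypersurface the conditions ``$\Omega X\cong X$ for all $X\in\mathcal{C}_A$'' and ``$\Omega X\cong X$ for all MCM $X$'' are equivalent. I would attack this by d\'evissage along the singular locus, or by a direct matrix-factorization argument showing that if some MCM module is not $\Omega$-periodic then neither is $\Omega^{d}_A(k)$. A minor technical point --- that the AR machinery (existence of AR triangles, and the formula $\tau\cong\Omega^{2-d}$) is available over Henselian, not merely complete, Gorenstein rings --- is handled using that Henselian local rings satisfy Krull--Schmidt.
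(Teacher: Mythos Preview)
Your approach coincides with the paper's: both use that AR-triangles in $\CMS(A)$ exist precisely for indecomposable MCM modules free on the punctured spectrum, that the AR-translate there is $\Omega^{2-d}$, and that a triangle equivalence carries AR-triangles to AR-triangles; comparing translates on the two sides forces $\Omega^{d_B-d_A}X\cong X$ for any such $X$. In Case~(1) the only difference is the choice of test object: the paper takes an indecomposable summand of the MCM approximation of $\m$ (automatically in $\mathcal{C}_A$) and invokes its \emph{extremality} (eventually strictly increasing Betti numbers, from \cite{AR}) rather than your $\Omega^d(k)$ plus Gulliksen's theorem; both choices work. Your worry about Henselian versus complete is handled: the paper establishes uniqueness and the existence criterion for AR-triangles directly over Henselian Gorenstein rings in the section preceding the proof.

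The obstacle you flag in Case~(2) is real, and the paper does not resolve it. Its proof simply reads ``As in case~(1) we get $\Omega_A^{s-r}(M)\cong M$,'' applying the AR-triangle comparison directly to the given $M$; but that comparison is only available for indecomposables free on the punctured spectrum, and nothing in the hypothesis places $M$ (or any indecomposable summand of it) in $\mathcal{C}_A$. So the missing step you identify---producing, from an arbitrary MCM module over a hypersurface with $\Omega M\not\cong M$, an object of $\mathcal{C}_A$ with the same property---is a genuine gap in the argument as written, not a difficulty you have introduced; neither your d\'evissage sketch nor the paper actually supplies it.
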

\begin{remark}
\begin{enumerate}
  \item If $A$ is a geometric hypersurface ring with multiplicity $\geq 3$ then there exists an MCM $A$-module $M$ with $\Omega(M) \ncong M$.
 (\emph{Sketch of a proof:}) A geometric hypersurface  has an Ulrich module  $U$ (i.e., $U$ is an MCM $A$-module with multiplicity equal to its number of minimal generators), see \cite[2.4]{HUB}. If multiplicity of $A$ is $\geq 3$ then by  \cite[Theorem 2]{P2}  it follows that
if $U$ is any Ulrich $A$-module then $\Omega(U) \ncong U$.

\item
Kn\"{o}rrer periodicity \cite{K} gives examples of hypersurface singularities with $\CMS(A)$ triangle equivalent to $\CMS(B)$ and $\dim A - \dim B$ a non-zero
even number. Note that Kn\"{o}rrer does not show the functor defined in \cite[Section 3]{K} is triangulated. However this is not difficult to prove. We also note that Kn\"{o}rrer periodicity does not preserve multiplicity. So multiplicity is not a stable invariant.

 \item
 If $(A,\m)$ is an excellent  Henselian Gorenstein ring which is an isolated singularity  then the functor $ -\otimes \widehat{A} \colon \CMS(A) \rt \CMS(\widehat{A})$ is an equivalence of triangulated categories. This fact is essentially contained in proof of
  Theorem 2.9 in \cite{W}
 (also see \cite[Remark A.6]{KMV}).

 \item
Let $R,S$ be complete \CM \ but not necessarily Gorenstein  and if there is an triangle isomorphism between the singularity categories of $R$ and $S$ then it is possible that
$\dim R - \dim S$ is odd, see \cite{Ka}. We believe the essential reason this occurs is that if $(R,\n)$ is non-Gorenstein then $D_{sg}(R)$ might not be Krull-Schmidt. In fact the example in  \cite[1.5]{Ka} is not Krull-Schmidt.
\end{enumerate}
\end{remark}

\s For a local ring $(A,\m)$ let $\Spec^0(A) = \Spec{A} \setminus \{\m\}$ denote the punctured spectrum of $A$.  Let $\CMS^0(A)$ denote the full subcategory of $\CMS(A)$ generated by MCM $A$-modules free on $\Spec^0(A)$. We note that if $E$ is an MCM $A$-module free on $\Spec^0(A)$, all syzygies of $E$ are also free on $\Spec^0(A)$.
Furthermore $\CMS^0(A)$ is a thick subcategory of $\CMS(A)$.
Proof of Theorem \ref{dim} also shows the following result:
\begin{lemma}\label{isolated}
Let $(A,\m), (B, \n)$ be Henselian Gorenstein  with  triangle equivalence $ \Psi \colon \CMS(A) \rt\CMS(B)$.
Then if $M \in \CMS^0(A)$ then $\Psi(M) \in \CMS^0(B)$. In particular if $A$ is an isolated singularity then so is $B$.
\end{lemma}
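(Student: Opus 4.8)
The plan is to characterise $\CMS^0(A)$ \emph{intrinsically}, in terms only of the triangulated structure of $\CMS(A)$, so that it is automatically carried into $\CMS^0(B)$ by $\Psi$; the statement about isolated singularities then follows after producing an explicit witness module. First I would record the following characterisation, where one uses that $A$ is Henselian and Gorenstein. For MCM $A$-modules $X,Y$ and a prime $\mathfrak p\neq\m$, the localisation $\sHom_A(X,Y)_{\mathfrak p}$ is naturally identified with $\sHom_{A_{\mathfrak p}}(X_{\mathfrak p},Y_{\mathfrak p})$ (using that $X$ is finitely generated), and this vanishes as soon as $X_{\mathfrak p}$ is $A_{\mathfrak p}$-free; hence $X\in\CMS^0(A)$ forces $\sHom_A(X,Y)$ to have finite length for every $Y$, while conversely, taking $Y=X$ and localising, finite length of $\sHom_A(X,X)$ forces $X_{\mathfrak p}$ free for every $\mathfrak p\neq\m$. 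Thus $X\in\CMS^0(A)$ if and only if $\sHom_A(X,Y)$ has finite length for all $Y\in\CMS(A)$. By Auslander's theory of almost split sequences over Henselian \CM\ local rings, transported to $\CMS(A)$ via its Frobenius structure (see \cite{Bu}; cf. \cite{Y}), for an indecomposable non-free $X$ this is equivalent to the existence of an Auslander--Reiten triangle in $\CMS(A)$ ending at $X$ (equivalently, starting at $X$), a condition that refers only to the triangulated category $\CMS(A)$. Since $A$ is Henselian, $\CMS(A)$ is Krull--Schmidt and $\CMS^0(A)$ is closed under direct summands, so an arbitrary object lies in $\CMS^0(A)$ iff each of its indecomposable summands does.

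Secondly, a triangle equivalence preserves Auslander--Reiten triangles: $\Psi$ preserves indecomposability, non-split triangles, and the ``right almost split'' lifting property, so an AR triangle $\tau X\rt E\rt X\rt\Omega^{-1}\tau X$ in $\CMS(A)$ is sent to an AR triangle $\Psi(\tau X)\rt\Psi(E)\rt\Psi(X)\rt\Omega^{-1}\Psi(\tau X)$ in $\CMS(B)$. Hence if $X\in\CMS^0(A)$ is indecomposable then $\Psi(X)$ admits an AR triangle ending at it, so $\Psi(X)\in\CMS^0(B)$; passing to indecomposable summands gives $\Psi(M)\in\CMS^0(B)$ for every $M\in\CMS^0(A)$. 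This proves the first assertion.

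Finally, suppose $A$ is an isolated singularity. Then every MCM $A$-module is free on $\Spec^0(A)$, i.e. $\CMS^0(A)=\CMS(A)$; since $\Psi$ is essentially surjective and $\CMS^0(B)$ is closed under isomorphism, the first assertion gives $\CMS^0(B)=\CMS(B)$. To deduce that $B$ is an isolated singularity, suppose not and pick $\mathfrak p\in\Spec^0(B)$ with $B_{\mathfrak p}$ non-regular; put $d=\dim B$ and $N=\Omega^d_B(B/\mathfrak p)$. The depth lemma gives $\depth N\geq d$, and $N\neq 0$ (otherwise $\projdim_B B/\mathfrak p<\infty$ and hence, localising, $B_{\mathfrak p}$ would be regular), so $N$ is an MCM $B$-module; localising a minimal free resolution of $B/\mathfrak p$ at $\mathfrak p$ shows $N_{\mathfrak p}\cong\Omega^d_{B_{\mathfrak p}}(\kappa(\mathfrak p))$ up to a free summand, which is non-free because $\kappa(\mathfrak p)$ has infinite projective dimension over the non-regular ring $B_{\mathfrak p}$. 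Thus $N$ is an MCM $B$-module which is not free on $\Spec^0(B)$, contradicting $\CMS^0(B)=\CMS(B)$; hence $B$ is an isolated singularity.

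I expect the main obstacle to be the implication ``an AR triangle ends at $X$ $\Rightarrow$ $X\in\CMS^0(A)$'' in the first step: this is the place where the Henselian and Gorenstein hypotheses are genuinely needed, and it must be quoted (or re-derived) carefully from the theory of almost split sequences over \CM\ local rings; one also has to be careful about the dictionary between AR sequences in $\CMa(A)$ and AR triangles in $\CMS(A)$, and about the fact that free modules are the zero object of $\CMS(A)$. The remaining steps are formal.
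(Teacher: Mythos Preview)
Your proposal is correct and follows essentially the same route as the paper: the core step is to characterise membership in $\CMS^0(A)$ for an indecomposable object by the existence of an AR-triangle ending at it (this is the paper's Proposition~\ref{exist-AR-triangle}), and then to observe that a triangle equivalence preserves AR-triangles (the paper's Lemma~\ref{fart}). Your finite-length characterisation of $\CMS^0(A)$ via $\sHom$ is a correct auxiliary remark but is not actually used in your argument, and the paper omits it; your explicit witness $N=\Omega^d_B(B/\mathfrak p)$ for the isolated-singularity conclusion is more detailed than the paper, which leaves that implication implicit. You have also correctly identified the one nontrivial point, namely the implication ``AR-triangle ends at $X$ $\Rightarrow$ $X\in\CMS^0(A)$'', which the paper proves directly (Proposition~\ref{exist-AR-triangle}, direction $(1)\Rightarrow(2)$) rather than quoting.
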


We  now discuss the technique used to prove Theorem \ref{dim} and Lemma \ref{isolated}. The notion of Auslander-Reiten (AR) triangles was first introduced by Happel in Hom-finite Krull-Schmidt triangulated categories, see \cite[1.4]{Happel}. Later in \cite[I.2.3]{RVan} it is shown that a Hom-finite Krull-Schmidt triangulated category has (right) AR-triangles if and only if  right Serre-functor. Our observation is that the notion of AR-triangle is categorical. Note if $A$ is Henselian then $\CMS(A)$ is Krull-Schmidt. So one can define notion of AR-triangle(ending at $M$)  when $M$ is indecomposable $ N \xrightarrow{f} E \xrightarrow{g} M \xrightarrow{h} \Om^{-1}(N)$  in $\CMS(A)$ (see \ref{defn-AR-triangle}). As our  category $\CMS(A)$ is no-longer Hom-finite it requires a proof that AR-triangles are unique upto isomorphism of triangles in $\CMS(A)$; see \ref{unique-AR}. Finally we prove that there is an AR-triangle ending at $M$ if and only if  $M \in \CMS^0(A)$. Furthermore  $N \cong \Om^{-d + 2}(M)$.
We then show that if $\psi \colon \CMS(A) \rt \CMS(B)$ is a triangle-equivalence and there is an AR-triangle ending in $M$ then $\psi $ induces naturally an AR-triangle ending in $\psi(M)$.
This proves Lemma \ref{isolated}. Finally we prove Theorem \ref{dim} using some techniques from \cite{AR}.

\textbf{II} \emph{Applications:}

(a) \emph{Essential periodicity of complete intersections:} \\
Recall a module $M$ is said to be periodic if $\Omega^r(M) \cong M$ for some $r \geq 1$.
Note if $A$ is \CM \ then a periodic module is necessarily MCM $A$-module.
Every abstract complete intersection  has a  periodic MCM module $M$ with $\Omega^2(M) \cong M$.
We show:
\begin{theorem}\label{obst-tri}
Let $(A,\m)$ be an abstract complete intersection of codimension $c$. Let $\mathcal{C}$ be some triangulated category with shift functor $\Sigma$. If $\phi \colon \CMS(A) \rt \mathcal{C}$ is a non-zero functor of triangulated categories then there exists $X \neq 0$ in $\mathcal{C}$ with $\Sigma^{2s}(X)  \cong X$ for some $s \geq 1$. If residue field of $A$ is infinite then we may choose $s = 1$.
\end{theorem}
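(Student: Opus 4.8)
The plan is to exploit the cohomology operators of the complete intersection $A$ together with the compatibility of a triangulated functor with the shift, carrying out a ``reduction of complexity'' argument inside $\CMS(A)$. First, since $\CMS(A) = 0$ when $A$ is regular, the hypothesis $\phi \neq 0$ forces $A$ to be singular, so its codimension satisfies $c \geq 1$; write $k$ for the residue field. As $\phi$ is not the zero functor there is an object $N \in \CMS(A)$ with $\phi(N) \neq 0$, and $N$, being non-free, has complexity $\cx_A(N) \in \{1, \dots, c\}$. The goal is to show that from such an $N$ one can always pass either directly to the sought periodic object $X$ of $\mathcal{C}$, or to a module of strictly smaller (but still positive) complexity whose $\phi$-image is again non-zero; iterating, the complexity must eventually force the first alternative.

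I would use the following standard input from the theory of support varieties over a complete intersection (Gulliksen, Eisenbud, Avramov--Buchweitz), which applies over an abstract complete intersection since complexity and the action of the cohomology operators are unaffected by completion. Let $S = k[\chi_1, \dots, \chi_c]$ be the graded polynomial ring of cohomology operators, $\deg \chi_i = 2$. A homogeneous element $\sigma \in S$ of degree $d$ determines, functorially in $L$, a morphism $\sigma_L \colon \Omega^{2d}L \to L$ in $\CMS(A)$; completing it to a triangle
\[
\Omega^{2d}L \xrightarrow{\ \sigma_L\ } L \longrightarrow L/\!\!/\sigma \longrightarrow \Omega^{2d-1}L,
\]
the support variety satisfies $V_A(L/\!\!/\sigma) = V_A(L) \cap V(\sigma)$ inside $\mathbb{A}^c_k$, where $V(\sigma)$ is the hypersurface cut out by $\sigma$. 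Because $V_A(-)$ is conical with $\dim V_A(L) = \cx_A(L)$, and $\cx_A(L) = 0$ precisely when $L \cong 0$ in $\CMS(A)$, it follows that whenever $V(\sigma)$ contains no irreducible component of $V_A(L)$ one has $\cx_A(L/\!\!/\sigma) = \cx_A(L) - 1$. Over an infinite residue field such a $\sigma$ of degree $d = 1$ exists, since a generic linear form avoids the finitely many components of $V_A(L)$, each of which is a positive-dimensional cone once $\cx_A(L) \geq 1$; for a general residue field one obtains such a $\sigma$ only of some degree $d \geq 1$ (prime avoidance in $S$).

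The iteration then proceeds as follows. Suppose $L \in \CMS(A)$ has $\phi(L) \neq 0$ and $\cx_A(L) \geq 1$; choose $\sigma$ of degree $d$ as above and apply $\phi$. Using the natural isomorphism $\phi\Omega^{-1} \cong \Sigma\phi$ (so that $\phi(\Omega^{2d}L) \cong \Sigma^{-2d}\phi(L)$) one obtains a triangle $\Sigma^{-2d}\phi(L) \xrightarrow{\phi(\sigma_L)} \phi(L) \to \phi(L/\!\!/\sigma) \to \Sigma^{-2d+1}\phi(L)$ in $\mathcal{C}$. If $\phi(\sigma_L)$ is an isomorphism, then $X := \phi(L) \neq 0$ satisfies $\Sigma^{2d}X \cong X$, and we are done with $s = d$ (and $s = 1$ when $k$ is infinite). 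If $\phi(\sigma_L)$ is not an isomorphism, then the third vertex $\phi(L/\!\!/\sigma)$ is non-zero; but $\cx_A(L/\!\!/\sigma) = 0$ would make $L/\!\!/\sigma$ zero in $\CMS(A)$ and hence $\phi(L/\!\!/\sigma) = 0$, a contradiction, so $\cx_A(L/\!\!/\sigma) \geq 1$ and we repeat with $L/\!\!/\sigma$ in place of $L$. Starting from $N$, the complexity strictly decreases and stays positive at each step, so the process terminates after at most $\cx_A(N) - 1$ steps, necessarily in the ``isomorphism'' branch, which produces the desired periodic object.

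The step I expect to be the main obstacle is the precise behaviour of support varieties under the cone construction $L/\!\!/\sigma$, namely that cutting $L$ by a cohomology operator intersects its support variety with a hyperplane, so that for generic $\sigma$ the complexity drops by exactly one. This is the complete-intersection analogue of Carlson's $L_\zeta$-construction and is essentially due to Avramov--Buchweitz; some care is needed to have both $\sigma_L$ as an honest morphism of $\CMS(A)$ and the support computation available over the possibly non-complete ring $A$ — one either runs the intrinsic (non-complete) theory or reduces to $\widehat{A}$ and descends, using that $\widehat{M} \cong \widehat{N}$ implies $M \cong N$ for finitely generated $A$-modules. The remaining issue, the prime-avoidance step over a finite residue field, is routine and is exactly what is responsible for the weaker conclusion (some $s \geq 1$ rather than $s = 1$) in that case.
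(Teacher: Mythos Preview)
Your proposal is correct and rests on the same core tool as the paper---the Eisenbud/cohomology operators furnishing a triangle $\Omega^{2s}L \to L \to K$ with $\cx_A K = \cx_A L - 1$ (the paper's Theorem~\ref{slice}; your $L/\!\!/\sigma$ construction via support varieties)---but the packaging differs. The paper first abstracts a general principle (Theorem~\ref{ascend-periodic}): if a triangulated category is exhausted by an ascending chain of thick subcategories whose successive Verdier quotients are periodic, then any nonzero triangulated functor out of it produces a periodic object; it then verifies the hypothesis for the filtration $\CMS_{\leq 1}(A) \subseteq \cdots \subseteq \CMS_{\leq c}(A)$ (Theorem~\ref{periodic}). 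You instead iterate directly on a single object $N$ with $\phi(N)\neq 0$: repeatedly cone off a cohomology operator and pass to $L/\!\!/\sigma$ until $\phi(\sigma_L)$ becomes an isomorphism, which must happen since the complexity cannot drop below $1$ while $\phi$ remains nonzero. The paper's route isolates a reusable lemma (applied again to $\CMS^0(A)$ in Corollary~\ref{vb-cor}), while yours is shorter for this single statement and sidesteps Verdier quotients entirely. The descent issue you flag---realizing $\sigma_L$ as an honest morphism in $\CMS(A)$ when $A$ is only an abstract complete intersection---is exactly what the second half of the proof of Theorem~\ref{slice} handles, via the smallness argument of \ref{small}.
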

\begin{remark}
We note that in Theorem \ref{obst-tri} the functor $\phi$ is \emph{not} necessarily an equivalence. It is simply \emph{non-zero}.
\end{remark}
A trivial corollary of Theorem \ref{obst-tri} is:
\begin{corollary}
\label{zero}
Let $(A,\m)$ be an abstract complete intersection and let $B$ be a Gorenstein local ring with no periodic modules. Then any triangulated functor $ \Psi \colon \CMS(A) \rt \CMS(B)$ is zero.
\end{corollary}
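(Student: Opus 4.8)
The plan is to argue by contradiction and feed a hypothetical non-zero $\Psi$ into Theorem~\ref{obst-tri}. So assume $\Psi\colon \CMS(A)\rt \CMS(B)$ is a non-zero functor of triangulated categories. The ring $A$ is an abstract complete intersection, say of codimension $c$, and the target $\CMS(B)$ is a triangulated category whose shift functor is $\Sigma=\Omega^{-1}$ (the co-syzygy). Applying Theorem~\ref{obst-tri} with $\mathcal{C}=\CMS(B)$ produces an object $X\neq 0$ of $\CMS(B)$ and an integer $s\geq 1$ with $\Omega^{-2s}(X)\cong X$ in $\CMS(B)$.

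The second step is to convert this stable co-periodicity into honest periodicity of a $B$-module. Since $\Omega^{-1}$ is an auto-equivalence of $\CMS(B)$ with inverse $\Omega$, applying $\Omega^{2s}$ to $\Omega^{-2s}(X)\cong X$ gives $X\cong \Omega^{2s}(X)$ in $\CMS(B)$. Replacing $X$ by a representative with no non-zero free direct summand (which is still non-zero, hence a non-free MCM $B$-module), its minimal $2s$-th syzygy $\Omega^{2s}(X)$ also has no free summand, being a higher syzygy in a minimal free resolution over the local ring $B$. The isomorphism $X\cong \Omega^{2s}(X)$ in $\CMS(B)$ means $X\oplus B^{a}\cong \Omega^{2s}(X)\oplus B^{b}$ for suitable $a,b\geq 0$; the classical cancellation theorem for free direct summands over a commutative Noetherian local ring, together with the fact that neither module has a free summand, then forces $a=b$ and $X\cong \Omega^{2s}(X)$ as $B$-modules. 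Thus $X$ is a non-free periodic $B$-module (of period dividing $2s$), and it is maximal \CM \ because $B$ is \CM. This contradicts the hypothesis that $B$ has no periodic modules, so $\Psi=0$.

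This is a short deduction and I do not expect a genuine obstacle; the only delicate point is the last one, namely passing from an isomorphism in $\CMS(B)$ to an actual isomorphism $X\cong\Omega^{2s}(X)$ of $B$-modules. If one instead reads ``$M$ is periodic'' as $\Omega^{r}(M)\cong M$ inside $\CMS(B)$, then this step is unnecessary and the corollary is immediate from Theorem~\ref{obst-tri}; otherwise one uses cancellation over local rings and the standard vanishing of free summands in minimal syzygies. I would also emphasise, as in the remark preceding the statement, that $\Psi$ is only assumed to be non-zero (not an equivalence), so the real content is that there is \emph{no} non-trivial triangulated functor from $\CMS(A)$ into $\CMS(B)$ whatsoever.
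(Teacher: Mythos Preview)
Your proof is correct and matches the paper's intent: the paper simply labels this a ``trivial corollary'' of Theorem~\ref{obst-tri} and gives no explicit argument, so the contrapositive application of that theorem is exactly what is meant. Your extra care in passing from the stable isomorphism $\Omega^{2s}(X)\cong X$ in $\CMS(B)$ to a genuine module isomorphism (via stripping free summands and cancellation over the local ring $B$) is a welcome clarification, and the key point that minimal syzygies of MCM modules over a Gorenstein local ring have no free summands follows from $\Ext^1_B(M,B)=0$ together with minimality.
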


(b) \emph{Periodic complexes with finite length cohomology :}

Let  $\Fb \colon \cdots \rt F^i \xrightarrow{d^i} F^{i+1} \rt \cdots$ be a co-chain complex of finitely generated free $A$-modules. We say $\Fb$ is periodic if $\Fb \cong \Fb(-s)$ for some $s > 0$. We say $\Fb$ is a minimal complex if $d^i(F^i) \subseteq \m F^{i+1}$ for   all $i$. If $M$ is a periodic $A$-module (i.e.,  $\Om^r(M) \cong M$ for some $r \geq 1$) then splicing a minimal resolution of $M$ we can construct a minimal periodic acyclic co-chain complex.  If we drop the assumption on acyclicty then we may ask whether there exist minimal periodic complexes with finite length cohomology.  We prove
\begin{theorem}\label{periodic-complex}
Let $(R,\m)$ be a Noetherian local ring which is a quotient of a regular local ring. Then there exists a minimal periodic (of period $2s$)co-chain complex $\Fb$ of finitely generated free $R$-modules  (i.e., $\Fb \cong \Fb[2s])$) such that $H^i(\Fb)$ has finite length for all $i$. If $R$ is regular or if residue field of $R$ is infinite we can choose $s = 1$. In this case there exists a minimal complex $\Gb$ of finitely generated free modules with $\Gb = \Gb[2]$ and  $H^i(\Gb)$ has finite length for all $i \in \Z$.
\end{theorem}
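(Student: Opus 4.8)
The plan is to obtain $\Gb$ directly (and hence also $\Fb$, taking $s=1$) by \emph{folding} the Koszul complex on a system of parameters of $R$; this uses only that $R$ is Noetherian local and produces period $2$ unconditionally, so the hypothesis that $R$ be a quotient of a regular local ring will in fact not be needed.

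First I would put $d=\dim R$, choose a system of parameters $\bx=x_1,\dots,x_d\in\m$ (the empty sequence when $d=0$), and form the Koszul complex $K_\bullet=K_\bullet(\bx;R)$ with $K_i=\bigwedge^i R^d$: a bounded complex $K_d\to\cdots\to K_1\to K_0$ of finitely generated free modules whose differential $\partial$ has all matrix entries among $0$ and $\pm x_1,\dots,\pm x_d$, each of which lies in $\m$. Then I would fold it: set $G^{\mathrm{ev}}=\bigoplus_{j\ge0}K_{2j}$ and $G^{\mathrm{od}}=\bigoplus_{j\ge0}K_{2j+1}$ (finite direct sums, since $K_i=0$ for $i>d$, hence finitely generated free), observe that $\partial(K_i)\subseteq K_{i-1}$ makes $\partial$ restrict to maps $d^{\mathrm{ev}}\colon G^{\mathrm{ev}}\to G^{\mathrm{od}}$ and $d^{\mathrm{od}}\colon G^{\mathrm{od}}\to G^{\mathrm{ev}}$ with $d^{\mathrm{od}}d^{\mathrm{ev}}=\partial^2=0=d^{\mathrm{ev}}d^{\mathrm{od}}$, and let $\Gb$ be the doubly infinite co-chain complex whose term in degree $2m$ is $G^{\mathrm{ev}}$, whose term in degree $2m+1$ is $G^{\mathrm{od}}$, and whose differentials are $d^{2m}=d^{\mathrm{ev}}$, $d^{2m+1}=d^{\mathrm{od}}$:
\[
\cdots \xrightarrow{\ d^{\mathrm{od}}\ } G^{\mathrm{ev}} \xrightarrow{\ d^{\mathrm{ev}}\ } G^{\mathrm{od}} \xrightarrow{\ d^{\mathrm{od}}\ } G^{\mathrm{ev}} \xrightarrow{\ d^{\mathrm{ev}}\ } G^{\mathrm{od}} \xrightarrow{\ d^{\mathrm{od}}\ } \cdots .
\]
By construction the degree-$i$ term equals the degree-$(i+2)$ term and $d^i=d^{i+2}$, so $\Gb=\Gb[2]$, and $\Gb$ is minimal because every entry of $\partial$ lies in $\m$.

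It then remains only to compute cohomology. Since $\partial$ respects the Koszul grading, taking kernels and images summand by summand gives
\[
H^{\mathrm{ev}}(\Gb)\cong\bigoplus_{j\ge0}H_{2j}(\bx;R),\qquad H^{\mathrm{od}}(\Gb)\cong\bigoplus_{j\ge0}H_{2j+1}(\bx;R).
\]
Each Koszul homology module $H_i(\bx;R)$ is annihilated by the ideal $(\bx)$, hence is a finitely generated module over the Artinian ring $R/(\bx)$ and therefore of finite length; so $H^i(\Gb)$ has finite length for every $i\in\Z$. Taking $\Fb=\Gb$ and $s=1$ then proves the theorem, including the stated sharpening when $R$ is regular or has infinite residue field. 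If one also wants $H^i(\Fb)\neq0$ in every degree, I would use $\Gb\oplus\Gb[1]$ instead, whose cohomology in each degree is $\bigoplus_{i\ge0}H_i(\bx;R)$, still of finite length and containing $R/(\bx)\neq0$.

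I do not expect any real obstacle along this route: the only verifications are the well-definedness of the folded differential, minimality, and the standard fact that Koszul homology on a system of parameters is annihilated by that ideal and so has finite length. A genuinely delicate point would appear only if one insisted on recovering the refined dichotomy exactly as stated ($s=1$ precisely when $R$ is regular or the residue field is infinite) through the paper's own machinery — matrix factorizations over a regular presentation $Q\twoheadrightarrow R$, or periodic modules over a complete-intersection reduction of $R$ — where one must keep the cohomology of finite length after base change from $Q$ (or from a hypersurface section) down to $R$, a step that can enlarge supports and where the residue-field hypothesis presumably enters. The folded-Koszul construction avoids this entirely.
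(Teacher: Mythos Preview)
Your argument is correct, and it is genuinely different from the paper's. The paper proves Theorem~\ref{periodic-complex} as an application of its stable-category machinery: it passes through an auxiliary complete intersection $A$ with $\dim A=\dim R$ (or $A=R[X]/(X^2)$ when $R$ is regular), uses the triangulated functor $\CMS^0(A)\hookrightarrow\CMS(A)\cong\Kc_{ac}(\proj A)\xrightarrow{-\otimes_A R}\Kc(\proj R)$, and invokes Corollary~\ref{vb-cor} (which rests on Eisenbud operators and the filtration by complexity) to produce a periodic object in the image; finite length of cohomology comes from $X\in\CMS^0(A)$, and the passage from $\Fb\cong\Fb[2]$ to a complex with literal equality $\Gb=\Gb[2]$ is a separate step (Proposition~\ref{2-periodic}). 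Your folded-Koszul construction bypasses all of this, needs no hypothesis on the residue field or on $R$ being a quotient of a regular ring, and yields $s=1$ and $\Gb=\Gb[2]$ in one stroke; it is strictly more elementary and strictly stronger than what the paper proves. What the paper's route buys is not a better theorem but an illustration that its categorical results have content: the periodic complex it produces is the shadow under $-\otimes_A R$ of an honest MCM module over a complete intersection, tying the construction to matrix-factorization--type data, whereas your folded complex has no such provenance.
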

\begin{remark}
\begin{enumerate}[\rm (a)]
  \item Theorem \ref{periodic-complex} has no content if $\dim R = 0$;  for in that case we may consider the co-chain complex
\[
\cdots \rt R\rt 0 \rt R \rt 0  \rt \cdots
\]
  \item If $\dim R > 0$ and $\Fb$ is two periodic  with finite length cohomology then necessarily $\rank \Fb^{2i} = \rank \Fb^{2i + 1}$ for all $i \in \Z$. (see Lemma \ref{rank}).
\end{enumerate}
\end{remark}

(b (i))\emph{$2$-periodic co-chain complexes  with finite length cohomology over a regular local ring:}\\
Let $(A,\m)$ be a regular local ring of dimension $d \geq 1$.
As MCM modules over $A$ are free we get $\CMS(A) = 0$. By  \cite[4.4.1]{Bu}  it follows that the  homotopy category of exact acyclic co-chain complexes of finitely generated free modules over $A$ is zero. An initial motivation  to prove Theorem \ref{periodic-complex} for us was to prove whether periodic co-chain complexes of finitely generated free modules (with finite length co-homology) exist up to homotopy over regular local rings.  However we make the following:
\begin{conjecture}\label{mine}
Let $(A,\m)$ be a  regular local ring of dimension $d \geq 1$.  Let $\Fb$ be a non-zero minimal 2-periodic co-chain complex $\Fb$ of finitely generated free $A$-modules  such that $H^i(\Fb)$ has finite length for all $i$.
Then
\begin{enumerate}[\rm (1)]
  \item  $\rank \Fb^i \geq d $ for all $i \in \Z$.
  \item There exists a minimal 2-periodic co-chain complex $\mathbb{G}$ of finitely generated free $A$-modules  such that $H^i(\mathbb{G})$ has finite length for all $i$ and $\rank \mathbb{G}^i = d$ for all $i \in \Z$.
\end{enumerate}
\end{conjecture}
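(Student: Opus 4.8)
The two assertions call for quite different methods. Write the terms of $\Fb$ as $\Fb^i = F^0$ for $i$ even and $\Fb^i = F^1$ for $i$ odd, with differentials $\alpha \colon F^0 \rt F^1$ and $\beta \colon F^1 \rt F^0$, so that $\alpha\beta = \beta\alpha = 0$, $\alpha(F^0) \subseteq \m F^1$ and $\beta(F^1) \subseteq \m F^0$. Since each $H^i(\Fb)$ has finite length, $\Fb \otimes_A A_\mathfrak{p}$ is exact for every $\mathfrak{p} \in \Spec^0(A)$; in particular $\Fb$ is generically exact, so $\rank F^0 = \rank F^1 =: a$ by Lemma~\ref{rank}. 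As $\Fb$ is minimal and non-zero it is not exact, so after a shift we may assume $H^0(\Fb) = \ker\alpha / \beta(F^1) \neq 0$. We shall use repeatedly the elementary fact that an exact $2$-periodic complex of finitely generated free modules over a \emph{regular} local ring is a direct sum of trivial complexes $0 \rt R \xrightarrow{\mathrm{id}} R \rt 0$ (a periodic module of finite projective dimension is free); applied on $\Spec^0(A)$ it makes the relevant syzygy modules of $\Fb$ locally free there.

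\emph{Part (1).} The plan is to bound $a$ below by $d$ by counting generators of a finite-colength submodule, the $\m$-primary case being controlled by the determinantal height bound. Suppose $a \leq d-1$; we seek a contradiction (the case $d = 1$ being trivial since $F^0 \neq 0$). First $\alpha \neq 0$: otherwise $\operatorname{coker}\beta$ would be a cohomology module of $\Fb$ of finite length while $\det\beta$ is a single element, forcing $d \leq 1$. Also $\beta \neq 0$: otherwise $H^0(\Fb) = \ker\alpha$ would be a non-zero torsion-free submodule of $F^0$ of finite length, impossible. Put $Z = \ker(\alpha \colon F^0 \rt F^1)$; the exact sequence $0 \rt Z \rt F^0 \xrightarrow{\alpha} F^1 \rt \operatorname{coker}\alpha \rt 0$ shows $Z \cong \Om^2(\operatorname{coker}\alpha)$, so $Z$ is reflexive, and by the remark above it is locally free of rank $r := a - \rank\alpha$ on $\Spec^0(A)$. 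Since $H^0(\Fb) \neq 0$ we get $\beta(F^1) \subsetneq Z$, so $Z \neq 0$, i.e.\ $r \geq 1$; and comparing ranks forces $\rank\beta = r$, for otherwise $Z/\beta(F^1) = H^0(\Fb)$ would not have finite length. Thus $\beta$ is a homomorphism $F^1 \cong A^a \rt Z$ with finite-length cokernel, and $r \leq a-1 \leq d-2$. If $d \leq 3$ then $r \leq 1$, so $Z$ is free (reflexive modules of rank at most $1$ over a regular local ring are free), say $Z \cong A^r$; the ideal of maximal minors of the resulting $r \times a$ matrix is then $\m$-primary, hence of grade $d$, while the determinantal height bound bounds this grade by $a - r + 1$, giving $a \geq d + r - 1 \geq d$, a contradiction. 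For $d \geq 4$ the module $Z$ can be a non-free reflexive module of rank $2 \leq r \leq d-2$, and handling this is, I expect, the crux of part (1): one wants a ``basic element'' argument in the spirit of the Evans--Griffith Syzygy Theorem showing directly that a reflexive module of positive rank over a $d$-dimensional regular local ring admits no finite-colength submodule generated by fewer than $d$ elements.

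\emph{Part (2).} Theorem~\ref{periodic-complex} already produces a complex of this kind (of rank $2^{d-1}$ when the residue field is infinite, by folding the Koszul complex); the content of the conjecture is the optimal rank $d$. For $d = 1$, let $t$ be a uniformizer and take
\[
\Gb \colon \cdots \rt A \xrightarrow{t} A \xrightarrow{0} A \xrightarrow{t} A \rt \cdots ,
\]
which is minimal and $2$-periodic with $H^{2i}(\Gb) = 0$, $H^{2i+1}(\Gb) = A/(t)$ of finite length, and $\rank\Gb^i = 1 = d$. For $d = 2, 3$ use the \emph{adjugate construction}: choose a $d \times d$ matrix $\alpha$ over $A$ with entries in $\m$, with $\det\alpha = 0$ but with $I_{d-1}(\alpha)$, the ideal of $(d-1)$-minors, $\m$-primary, and set $\beta = \operatorname{adj}(\alpha)$. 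Then $\alpha\beta = \beta\alpha = \det(\alpha)\cdot\mathrm{id} = 0$, the entries of $\beta$ lie in $\m$, and at every $\mathfrak{p} \in \Spec^0(A)$ the matrix $\alpha_\mathfrak{p}$ has corank exactly $1$, which forces $\image\alpha_\mathfrak{p} = \ker\beta_\mathfrak{p}$ and $\image\beta_\mathfrak{p} = \ker\alpha_\mathfrak{p}$; hence $\Gb \colon \cdots \rt A^d \xrightarrow{\alpha} A^d \xrightarrow{\beta} A^d \rt \cdots$ is a minimal $2$-periodic complex, exact on $\Spec^0(A)$, with $\rank\Gb^i = d$. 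Explicitly one may take $\alpha = \binom{x}{y}(x\ y)$ and $\beta = \binom{-y}{x}(-y\ x)$ for $d = 2$ (then $H^0(\Gb) = H^1(\Gb) = k$), and, for $d = 3$, $\alpha$ the second Koszul differential $A^3 \rt A^3$ on a regular system of parameters $x_1, x_2, x_3$ (for which $I_2(\alpha) \supseteq \m^2$), with $\beta = \operatorname{adj}(\alpha)$. The adjugate construction cannot survive large $d$: any $d \times d$ matrix satisfies $\operatorname{ht} I_{d-1}(\alpha) \leq 4$, so for $d \geq 5$ no rank-$(d-1)$ matrix can keep corank $1$ throughout $\Spec^0(A)$; and since exactness on $\Spec^0(A)$ forces $\rank\alpha_\mathfrak{p} + \rank\beta_\mathfrak{p} = d$ there, both ranks must stay at their generic values, ruling out the shape $(\rank\alpha, \rank\beta) = (d-1, 1)$. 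The plan for general $d$ is therefore to allow complementary ranks: with $r = \lfloor d/2 \rfloor$, find matrices $\alpha, \beta$ with entries in $\m$ satisfying $\alpha\beta = \beta\alpha = 0$, $\rank\alpha \leq r$, $\rank\beta \leq d-r$, and $I_r(\alpha), I_{d-r}(\beta)$ both $\m$-primary; then $\operatorname{coker}\alpha$ and $\operatorname{coker}\beta$ are locally free on $\Spec^0(A)$, so (as in part (1)) $\image\alpha = \ker\beta$ and $\image\beta = \ker\alpha$ there, $\Gb$ has finite-length cohomology, and $\rank\Gb^i = d$. Here the determinantal height bounds $(d-r+1)^2$ and $(r+1)^2$ both exceed $d$, so the $\m$-primary requirements are not obstructed for dimension reasons; the decisive difficulty of part (2) I expect to be precisely the construction step: producing a suitably generic (or explicit) family of such pairs $(\alpha, \beta)$ and verifying that a member has $I_r(\alpha)$ and $I_{d-r}(\beta)$ of height $d$.
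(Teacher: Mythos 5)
Since \ref{mine} is a conjecture, you, like the paper, supply supporting evidence rather than a full proof; so the useful comparison is between the two bodies of partial results. For part (1) your route is genuinely different from the paper's: you view $Z = \ker\alpha$ as a reflexive second syzygy, locally free of rank $r = a - \rank\alpha$ on $\Spec^0(A)$, and when $d \leq 3$ forces $r \leq 1$ (so $Z$ is free) you apply the Eagon--Northcott height bound to the map $\beta \colon A^a \rt Z$ with $\m$-primary cokernel. That argument is correct through $d \leq 3$ and honestly isolates where it stalls at $d = 4$. The paper's Theorem \ref{regular-periodic}, however, also proves the $d = 4$ case, by a different and more ad hoc argument: reduce modulo a regular parameter to bound the common rank below by $3$, then rule out rank exactly $3$ by realizing $\ker\partial^0$ as an ideal $I$ containing the at-most-$3$-generated $J = \image\partial^{-1}$, writing $J = aK$ via unique factorization, showing $K = A$ by localizing at a minimal prime of $K$, and finishing with a depth count. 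So your proposal is structurally cleaner but reaches one dimension less than the paper; the ``basic element''/Evans--Griffith idea you flag is precisely the missing step for $d \geq 4$.

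For part (2), your $d = 1$ and $d = 3$ complexes coincide with the paper's (the pair $(\alpha, \operatorname{adj}\alpha)$ with $\alpha$ the second Koszul differential is the paper's $d = 3$ construction up to an overall sign on $\beta$), and your $d = 2$ example is a minor variant of the paper's rank-one pushforward. The unifying adjugate viewpoint, and your observation that $\htt I_{d-1}(\alpha) \leq 4$ kills the adjugate trick for $d \geq 5$, are correct and go beyond what the paper records. One genuine improvement worth flagging explicitly: folding the Koszul complex on a regular system of parameters yields a minimal $2$-periodic complex with finite-length cohomology and rank $2^{d-1}$, which beats the rank $2^d$ of the example constructed in Theorem \ref{exp}.
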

Next we give results which support our conjecture.

(a) We prove (1) for $d = 1,2,3, 4$.

(b) We show it for $d = 1, 2,3$.

In general we construct a minimal $2$-periodic co-chain complex $\mathbb{H}$ with $\rank \mathbb{H}^{i} = 2^d$ and $H^i(\mathbb{H})$ has finite length for all $i$.

Here is an overview of the contents of this paper. In section two we introduce notation and discuss a few
preliminary facts that we need. In section three we give a construction which we use often. We also prove Theorem \ref{obst-tri}.
In the next section we prove Theorem \ref{theorem-ci}. We prove Theorem \ref{cm-r}  in section five.  In the next section we prove Theorem \ref{dim}, Lemma \ref{isolated}.  In section seven we prove Theorem \ref{periodic-complex} and give evidence for the validity of Conjecture \ref{mine}.

\section{notation and preliminaries }
In this section we introduce some notation and discuss some preliminaries.
In this paper all rings are Noetherian and all modules considered are finitely generated. We let $\mathbb{N}$ denote the set of non-negative integers. Nothing in this section is a new result.

\s Let $(A,\m)$ be local with residue field $k$ and let $M$ be an $A$-module. By $\ell(M)$ we denote the length of $M$ and by $\mu(M)$ we denote the number of its minimal generators. Set $\beta_i(M) = \ell(\Tor^A_i(k, M)) = \ell(\Ext_A^i(M, k)) $  the $i^{th}$ betti number of $M$.

\s Let $S = \bigoplus_{n \geq 0}S_n$ be a  graded algebra over a ring $S_0$. Let $M$ be a graded $S$-module.  If $m$ is a homogeneous element of $M$ then we set $|m| = $ degree of $m$.

We need the following well-known result.
\begin{proposition}\label{filt-deg2}
  Let $S = K[X_1, \ldots, X_d]$ be a polynomial ring with $\deg X_i = 2$ for all $i$.  Let $M$ be a graded  $S$-module of dimension $r \geq 1$. Then there exists  homogeneous $x$ of degree $2s$ such that $\ker (M(-|2s|) \xrightarrow{x} M)$ has finite length kernel. If $K$ is infinite then we may choose $s = 1$.
\end{proposition}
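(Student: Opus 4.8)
The plan is to reduce everything to producing one homogeneous element $x \in S$ of positive degree that avoids every associated prime of $M$ other than the irrelevant maximal ideal $\mathfrak{m} := (X_1,\dots,X_d)$. Since every nonzero homogeneous element of $S$ has even degree, such an $x$ will automatically have degree $2s$ for some $s \geq 1$; and I will then check that the kernel of multiplication by $x$ has finite length. (Note $d \geq 1$ because $\dim M = r \geq 1$.)

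First I would record the relevant primes: $\Ass_S(M)$ is a finite set of graded primes since $M$ is a finitely generated graded module, and I let $\mathfrak{p}_1,\dots,\mathfrak{p}_u$ be its members distinct from $\mathfrak{m}$; there is at least one of them because $\dim M \geq 1$. As $\mathfrak{m}$ is the only graded prime $\mathfrak{q}$ with $\dim S/\mathfrak{q} = 0$, each $\mathfrak{p}_i$ is strictly contained in $\mathfrak{m}$, so in particular $\mathfrak{m} \not\subseteq \mathfrak{p}_i$ for all $i$.

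Next I construct $x$. If $K$ is infinite I take $x$ in the degree-two component $S_2 = KX_1 \oplus \dots \oplus KX_d$: for each $i$ the intersection $\mathfrak{p}_i \cap S_2$ is a proper $K$-subspace of $S_2$ (equality would force $\mathfrak{p}_i \supseteq S_2$, hence $\mathfrak{p}_i \supseteq \mathfrak{m}$, a contradiction), and a vector space over an infinite field is not a finite union of proper subspaces, so a general $x \in S_2$ avoids $\mathfrak{p}_1 \cup \dots \cup \mathfrak{p}_u$; this yields $s = 1$. For an arbitrary field $K$ I would instead apply homogeneous prime avoidance to the ideal $\mathfrak{m}$, which is generated in positive degree and is contained in none of the $\mathfrak{p}_i$: it therefore contains a homogeneous element $x$ outside $\mathfrak{p}_1 \cup \dots \cup \mathfrak{p}_u$, and such an $x$ has positive, hence even, degree $2s$.

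Finally I would conclude. Put $N := \ker\bigl(M(-2s) \xrightarrow{x} M\bigr) = (0 :_M x)$, a graded submodule of $M(-2s)$. Every $\mathfrak{q} \in \Ass_S(N)$ lies in $\Ass_S(M)$ and equals $\ann_S(m)$ for some nonzero $m \in N$; from $xm = 0$ we get $x \in \mathfrak{q}$, and by the choice of $x$ the only associated prime of $M$ containing $x$ is $\mathfrak{m}$, so $\Ass_S(N) \subseteq \{\mathfrak{m}\}$. A finitely generated graded $S$-module whose associated primes are contained in $\{\mathfrak{m}\}$ (the zero module included) has finite length, so $\ell(N) < \infty$. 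The only step that is more than bookkeeping with associated primes is the homogeneous prime-avoidance argument when $K$ is finite — getting the avoiding element to be homogeneous — but this is the standard power-and-sum trick (replace chosen homogeneous elements by equal-degree powers and add), so I do not expect it to be a real obstacle.
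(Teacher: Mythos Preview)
Your argument is correct: producing a homogeneous element outside every associated prime of $M$ other than $\mathfrak{m}$ is exactly the filter-regular element construction, and your verification that $(0:_M x)$ then has $\Ass \subseteq \{\mathfrak{m}\}$ is clean. The paper itself gives no proof of this proposition --- it is stated as a ``well-known result'' and used as a black box --- so there is nothing to compare against beyond noting that your write-up supplies precisely the standard argument the paper is implicitly invoking.
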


\s Let $f \colon \N \rt \Z$ be a function. Recall $f$ is said to be of polynomial type if there exists $p(X)\in \Q[X]$ such that $p(n) = f(n)$ for all
$n \gg 0$. Notice $p(X)$ is uniquely determined by $f$ and we write it as $p_f(X)$. We also write $\deg f = \deg p_f(X)$. We make the convention that the zero polynomial has degree $- \infty$.
The following result is easy to prove.
\begin{proposition}\label{poly-type}
Let $f$ be of polynomial type of degree $r$.
\begin{enumerate}[\rm (1)]
  \item For $a \in \Z$ we have $g(n) = f(n) + f(n + a) $ is of polynomial type and $\deg g \leq r$.
  \item For $a,b  \in \Z $ with $a \geq 0$ we have  $h(n) = f(an + b)$ is of polynomial type and  $\deg h \leq r$.
  \item  For $a,b  \in \Z $ with $a \geq 0$ we have $t(n)  = \sum_{i = 0}^{n-1}f(ia + b)$ is of polynomial type and $\deg t(n) \leq r + 1$. \qed
  \item For $r \geq 0$ the function $s(n) = \sum_{i = 0}^{n}i^r$ is of polynomial type of degree $r + 1$.
\end{enumerate}
\end{proposition}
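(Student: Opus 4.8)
The plan is to reduce parts (1), (2) and (3) to the definition of polynomial type together with part (4), and to prove (4) by the standard power-sum computation. Throughout, the point is that ``for $n \gg 0$'' the values of $f$ agree with the polynomial $p_f$, so that after a bounded adjustment of the summation range everything becomes an honest polynomial identity in $n$; and a function which eventually agrees with a polynomial is, by definition, of polynomial type.

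For (1): for $n \gg 0$ both $n$ and $n+a$ are large, so $g(n) = p_f(n) + p_f(n+a)$; since $p_f(X+a)$ has the same degree (indeed the same leading term) as $p_f(X)$, the polynomial $p_f(X) + p_f(X+a)$ has degree $\le r$ and computes $g(n)$ for $n \gg 0$. For (2): once $n$ is large enough that $an+b$ lies in the range where $f = p_f$, we have $h(n) = p_f(aX+b)|_{X = n}$, a polynomial in $n$ of degree $\le r$ (of degree exactly $r$ if $a \ge 1$, and constant if $a = 0$). For (3): by (2) there are $i_0$ and a polynomial $q(X) = p_f(aX+b)$ of degree $\le r$ with $f(ia+b) = q(i)$ for $i \ge i_0$, so for $n > i_0$
\[
t(n) = \Big( \sum_{i=0}^{i_0 - 1} f(ia+b) - \sum_{i=0}^{i_0 - 1} q(i) \Big) + \sum_{i=0}^{n-1} q(i),
\]
where the bracketed term is a constant independent of $n$. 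Expanding $q$ in the $\Q$-basis $\binom{X}{0}, \dots, \binom{X}{r}$ and using the hockey-stick identity $\sum_{i=0}^{n-1}\binom{i}{k} = \binom{n}{k+1}$ shows that $\sum_{i=0}^{n-1} q(i)$ is a polynomial in $n$ of degree $\le r+1$; alternatively one expands $q$ in powers of $i$ and applies (4) termwise. Either way $t$ is of polynomial type of degree $\le r+1$.

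For (4): I would first write $X^r = \sum_{k=0}^{r} c_k \binom{X}{k}$ with $c_r = r! \ne 0$ (possible since the $\binom{X}{k}$, $0 \le k \le r$, form a $\Q$-basis of the polynomials of degree $\le r$), and then use $\sum_{i=0}^{n}\binom{i}{k} = \binom{n+1}{k+1}$ to get $s(n) = \sum_{k=0}^{r} c_k \binom{n+1}{k+1}$, a polynomial in $n$ whose top-degree term is $r!\,\binom{n+1}{r+1}$; since this term has degree $r+1$ with nonzero leading coefficient, $\deg s = r+1$ exactly. A self-contained alternative is induction on $r$ via $(n+1)^{r+1} = \sum_{i=0}^{n}\big( (i+1)^{r+1} - i^{r+1} \big) = \sum_{j=0}^{r} \binom{r+1}{j} \sum_{i=0}^{n} i^j$, solved for $\sum_{i=0}^n i^r$.

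There is essentially no serious obstacle: the only things demanding care are the bookkeeping of the threshold ``$n \gg 0$'', the harmless constant correction from the finitely many small indices in (3), and the degenerate convention $\deg 0 = -\infty$, under which the degree bounds in (1)--(3) should be read assuming $r \ge 0$ (equivalently $a \ge 1$ in (2) and (3)) --- a hypothesis automatically satisfied in the intended applications, where $f$ is a Hilbert-type function of a module of positive dimension.
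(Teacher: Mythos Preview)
Your argument is correct in all four parts. The paper, however, gives no proof at all: it introduces the proposition with ``The following result is easy to prove'' and terminates the statement with a \qed, so there is nothing to compare against beyond that. Your write-up supplies exactly the kind of routine verification the author had in mind, and your closing caveat about the degenerate case $r = -\infty$ is a fair observation (the paper never invokes the proposition in that regime).
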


\s \label{bella-rangella} We need to recall the notion of complexity of a module.  This notion was introduced by Avramov in \cite{LLAV}. More generally we define complexity of a function $f \colon \N \rt \N$.
\[
\cx f = \inf\left \lbrace b \in \mathbb{N}  \left|  \limsup_{n \rt \infty} \frac{f(n))}{n^{b-1}} <  \infty
                                                                 \right. \right \rbrace.
\]
Let $\beta_i^A(M) = \ell( \Tor^A_i(M,k) )$ be the $i^{th}$ Betti number of $M$ over $A$. Let $\beta_M$ be the function defined as $\beta_M(n) = \beta_i^A(M)$.  The complexity of $M$ over $A$ is defined by
\[
    \cx_A M = \cx \beta_M.
\]
Note that $\cx_A M = 0$ if and only if $\projdim_A M < \infty$. Furthermore $\cx_A M  \leq 1$ if and only if $M$ has bounded Betti numbers.
If $A$ is a local abstract
 complete intersection of $\codim c$ then $\cx_A M \leq c$ for any $A$-module $M$; see \cite[4.1]{Gull}.

 \begin{remark}
 \label{equi-complexity}
 We note that $\limsup_{n \rt \infty} f(n)/n^{b-1} <  \infty$ if and only if there exists a polynomial $p(X) \in \mathbb{Q}[X]$ with $\deg p(X) = b -1$ and positive leading coefficient such that $f(n) \leq p(n)$ for all $n \gg 0$.
 \end{remark}
\s\label{max-cx} For $i = 1, \ldots, r$  let $f_i \colon \N  \rt \Z$ be functions.
Set $f(n) = \max\{ f_i(n) \mid 1\leq i \leq r \}$. Then it is clear that
\[
\cx f \leq \max \{ \cx f_i \mid 1\leq i \leq r \}.
\]
We will need the following result. Although elementary we give a proof for the convenience of the reader.
\begin{lemma}\label{bella}
Let $f,g \colon \N \rt \Z$ be functions such that there exists $n_0$ and $d$ such that
\[
f(n + d) \leq f(n) + g(n) + g(n-1) \quad \text{for all} \ n \geq n_0.
\]
If $\cx g \leq r$ then $\cx f \leq r + 1$.
\end{lemma}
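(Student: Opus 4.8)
The plan is to unwind the complexity bound on $g$ into a polynomial upper bound, plug that into the recursion for $f$, and then recognize the resulting summation as one of polynomial type whose degree we can control via Proposition \ref{poly-type}. First I would invoke Remark \ref{equi-complexity}: since $\cx g \leq r$, there is a polynomial $p(X) \in \Q[X]$ with $\deg p(X) \leq r-1$ (take it of degree exactly $r-1$ with positive leading coefficient, or the zero polynomial if $r=0$) and an integer $n_1 \geq n_0$ such that $g(n) \leq p(n)$ for all $n \geq n_1$. Then for $n \geq n_1 + 1$ the hypothesis gives $f(n+d) \leq f(n) + p(n) + p(n-1)$, and the function $q(n) := p(n) + p(n-1)$ is again a polynomial in $n$ of degree $\leq r-1$ by Proposition \ref{poly-type}(1) (applied to the polynomial function $p$).

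Next I would iterate the recursion. Fix a residue $j \in \{0, 1, \ldots, d-1\}$ and consider the subsequence $n \mapsto f(n_1 + 1 + j + nd)$. Telescoping the inequality $f(m+d) \leq f(m) + q(m)$ along this arithmetic progression yields, for all $N \geq 0$,
\[
f\big(n_1 + 1 + j + Nd\big) \;\leq\; f\big(n_1 + 1 + j\big) \;+\; \sum_{i=0}^{N-1} q\big(n_1 + 1 + j + id\big).
\]
By Proposition \ref{poly-type}(2), the function $i \mapsto q(n_1 + 1 + j + id)$ is of polynomial type of degree $\leq r-1$, and then by Proposition \ref{poly-type}(3) the partial-sum function $N \mapsto \sum_{i=0}^{N-1} q(n_1 + 1 + j + id)$ is of polynomial type of degree $\leq r$. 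Hence there is a polynomial $P_j(X) \in \Q[X]$ of degree $\leq r$ with positive leading coefficient (pad with a large constant if necessary, and if everything is zero take $P_j$ a suitable positive constant) such that $f(n_1 + 1 + j + Nd) \leq P_j(N)$ for all $N \gg 0$; rewriting $N$ in terms of the original argument, $f(m) \leq P_j\big((m - n_1 - 1 - j)/d\big)$ for all $m$ in that progression with $m \gg 0$, and the right-hand side is (the restriction of) a polynomial in $m$ of degree $\leq r$.

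Finally I would splice the $d$ residue classes back together. Let $\widetilde{P}(X)$ be a polynomial of degree $\leq r$ with positive leading coefficient dominating each of the finitely many polynomials obtained above (e.g. a common upper bound, which exists since there are only $d$ of them). Then $f(m) \leq \widetilde{P}(m)$ for all $m \gg 0$, so $\limsup_{m \to \infty} f(m)/m^{r} < \infty$, which by definition gives $\cx f \leq r+1$. I expect the only mildly delicate points to be bookkeeping: ensuring the dominating polynomials have positive leading coefficient so that Remark \ref{equi-complexity} applies cleanly in the reverse direction, and handling the degenerate case $r = 0$ (where $g$ is bounded, $p$ is constant, and one concludes $f$ has at most linear growth, i.e. $\cx f \leq 1$) uniformly with the general case. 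None of these is a real obstacle; the substantive content is entirely the degree accounting in Proposition \ref{poly-type}(1)--(3).
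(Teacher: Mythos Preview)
Your proof is correct and follows essentially the same approach as the paper: both arguments bound $g(n)+g(n-1)$ by something of complexity $\leq r$, telescope the recursion along each residue class modulo $d$, invoke Proposition~\ref{poly-type} to control the resulting partial sums, and then reassemble the $d$ progressions. Your write-up is more explicit in converting complexity bounds to polynomial bounds via Remark~\ref{equi-complexity} before summing, whereas the paper works directly with the function $h(n)=g(n)+g(n-1)$, but the substance is the same.
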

\begin{proof}
Set $h(n) = g(n) + g(n -1)$ for $n \geq 1$ and $h(0) = 0$. Then clearly $\cx h \leq r$.
We have $f(n + s) \leq  f(n) + h(n)$ for $n \geq  n_0$.
For $i = 0, \ldots, d -1$  set
\[
f_i(m) = f(n_0 + i + dm)
\]
We have
\[
f_i(m) \leq  f(n_0 + i) +\sum_{ k = 0}^{m-1} h(n_0 + i + kj).
\]
Using \ref{poly-type}(4) we get $\cx f_i \leq r + 1$.
It follows that $\cx f \leq r + 1$.
\end{proof}

 \s \label{small} Let $N$ be an  $A$-module. For each non-negative integer $n$, let $\pi^n$ denote
  the canonical surjection $N\to N/\m^{n+1}N$.
  We say $\pi_n$ is \textit{small} if the induced maps
  $$\Tor_j^A(\pi_n, k) \colon \Tor_j^A(N, k) \rt  \Tor_j^A(N/\m^{n+1}N, k);$$
  is injective for all $j \geq 0$.
  By \cite[(A.4)]{Av1} there exists $n_0$ such that $\pi_n$ is small for all $n \geq n_0$.

\s\label{T-struc} \emph{Triangulated category structure on $\CMS(A)$}. \\
The reference for this topic is \cite[4.7]{Bu}. We first describe a
basic exact triangle. Let $f \colon M \rt N$ be a morphism in $\CMa(A)$. Note we have an exact sequence $0 \rt M \xrightarrow{i} Q \rt \Om^{-1}(M) \rt 0$, with $Q$-free. Let $C(f)$ be the pushout of $f$ and $i$. Thus we have a commutative diagram with exact rows
\[
  \xymatrix
{
 0
 \ar@{->}[r]
  & M
\ar@{->}[r]^{i}
\ar@{->}[d]^{f}
 & Q
\ar@{->}[r]^{p}
\ar@{->}[d]
& \Om^{-1}(M)
\ar@{->}[r]
\ar@{->}[d]^{j}
&0
\\
 0
 \ar@{->}[r]
  &N
\ar@{->}[r]^{i^\prime}
 & C(f)
\ar@{->}[r]^{p^\prime}
& \Om^{-1}(M)
    \ar@{->}[r]
    &0
\
 }
\]
Here $j$ is the identity map on $\Om^{-1}(M)$.
As $N, \Om^{-1}(M) \in \CMa(A)$  it follows that $C(f) \in \CMa(A)$.
Then the projection of the sequence
$$ M\xrightarrow{f} N \xrightarrow{i^\prime} C(f) \xrightarrow{-p^\prime} \Omega^{-1}(M)$$
in $\CMS(A)$  is a basic exact triangle. Exact triangles in $\CMS(A)$ are triangles isomorphic to a basic exact triangle.

\s \label{ext-triangles:} The following assertions are well-known: We note that the lower exact sequence in the above commutative diagram can be considered as an element in $\Ext^1_A(\Om^{-1}(M), N)$. Conversely  consider
an element $s \colon 0 \rt N \rt L \rt \Om^{-1}(M) \rt 0$ of $\Ext^1_A(\Om^{-1}(M), N)$.   Consider the exact sequence $ 0 \rt M \rt Q \rt \Om^{-1}(M) \rt 0$ with
$Q$-free. Let $f \colon M \rt N$ be a lift of identity map on $\Om^{-1}(M)$. Then a simple diagram chase (for instance see  \cite[Chapter XIV, proof of Theorem 1.1]{CE}) shows that connecting surjection $\gamma \colon \Hom_A(M, N) \rt \Ext^1_A(\Om^{-1}(M), N)$ maps $f$ to $s$.
Thus given an element $s \in \Ext^A_1(\Om^{-1}(M), N)$
\[
s \colon 0 \rt N \xrightarrow{\alpha} L \xrightarrow{\beta} \Om^{-1}(M) \rt 0;
\]
there exists a basic exact triangle in $\CMS(A)$
\[
M \xrightarrow{f}  N \xrightarrow{\alpha} L \xrightarrow{-\beta} \Om^{-1}(M)
\]
where $\gamma$ maps $f$ to $s$.
\s It can be easily seen that the surjective map \\ $\gamma \colon \Hom_A(M, N) \rt \Ext^1_A(\Om^{-1}(M), N)$ discussed above induces an isomorphism
\[
\underline{\gamma} \colon \sHom_A(M, N) \rt \Ext^1_A(\Om^{-1}(M), N).
\]

The following result is definitely  known. We give a proof due to lack of a reference.
\begin{lemma}\label{betti-stable}
Let $(A,\m)$ be a Gorenstein local ring and let $M, N, L$ be MCM $A$-modules.
\begin{enumerate}[\rm (1)]
\item
A morphism $f \colon M \rt N$ in $\CMS(A)$ yields for $i \geq 1$ well-defined maps
$$T_i(f) \colon \Tor^A_i(M, k) \rt \Tor^A_i(N, k).$$
\item
If $f \colon M \rt N$ is isomorphism in $\CMS(A)$  then $T_i(f)$ are isomorphisms for $i \geq 1$.
\item
If $M \rt N \rt L \rt \Omega^{-1}M$ is an exact triangle in
$\CMS(A)$ then for $i \geq 2$ we have an exact sequence
\[
\Tor^A_i(M, k) \rt \Tor^A_i(N, k)  \rt \Tor^A_i(N, k) \xrightarrow{\delta_i} \Tor^A_{i-1}(M, k)
\]
\end{enumerate}
\end{lemma}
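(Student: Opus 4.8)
The plan is to handle parts (1) and (2) formally and then derive (3) from a diagram chase with the long exact sequence in $\Tor$.

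For (1), recall that a morphism $f\colon M\rt N$ in $\CMS(A)$ is the class of an $A$-linear map $\widetilde f\colon M\rt N$ modulo the submodule $P(M,N)$ of maps that factor through a finitely generated free module. If $\widetilde f$ factors as $M\rt F\rt N$ with $F$ free, then $\Tor^A_i(\widetilde f,k)$ factors through $\Tor^A_i(F,k)=0$ for every $i\geq 1$; hence $T_i(f):=\Tor^A_i(\widetilde f,k)$ is independent of the chosen representative, and $T_i$ is visibly a functor on $\CMS(A)$ by functoriality of ordinary $\Tor$. This proves (1), and (2) follows at once: if $g$ is a two-sided inverse of $f$ in $\CMS(A)$, then applying $T_i$ to $g\circ f=\mathrm{id}_M$ and $f\circ g=\mathrm{id}_N$ exhibits $T_i(g)$ as a two-sided inverse of $T_i(f)$.

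For (3), I would first reduce to a basic exact triangle: writing the given triangle as $M\xrightarrow{f}N\xrightarrow{g}L\xrightarrow{h}\Om^{-1}(M)$, it is isomorphic, by an isomorphism of triangles which is the identity on $M$ and on $N$, to the basic exact triangle $M\xrightarrow{f}N\xrightarrow{i'}C(f)\xrightarrow{-p'}\Om^{-1}(M)$ of \ref{T-struc}; in particular $L\cong C(f)$ in $\CMS(A)$, so by (2) it is enough to produce the desired exact sequence for the basic exact triangle and then replace $\Tor^A_i(C(f),k)$ by $\Tor^A_i(L,k)$ for $i\geq 1$. Now the pushout diagram of \ref{T-struc} is a morphism of short exact sequences from $0\rt M\xrightarrow{i}Q\xrightarrow{p}\Om^{-1}(M)\rt 0$ (with $Q$ free) to $0\rt N\xrightarrow{i'}C(f)\xrightarrow{p'}\Om^{-1}(M)\rt 0$, equal to $f$ on the left-hand terms and to the identity on $\Om^{-1}(M)$. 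Taking the associated long exact sequences in $\Tor^A_\bullet(-,k)$ gives a morphism of long exact sequences. Since $Q$ is free, the connecting maps of the top sequence are isomorphisms $\sigma_i\colon\Tor^A_i(\Om^{-1}(M),k)\xrightarrow{\cong}\Tor^A_{i-1}(M,k)$ for all $i\geq 2$, and commutativity of the connecting-map squares, whose left edge is the identity of $\Tor^A_i(\Om^{-1}(M),k)$, gives $\partial^{\mathrm{bot}}_i=\Tor^A_{i-1}(f,k)\circ\sigma_i$. Substituting $\sigma_i$ and $\sigma_{i+1}$ into the bottom long exact sequence identifies it, in the range $i\geq 2$, with
\[
\cdots\rt\Tor^A_i(M,k)\xrightarrow{T_i(f)}\Tor^A_i(N,k)\xrightarrow{T_i(i')}\Tor^A_i(C(f),k)\xrightarrow{\delta_i}\Tor^A_{i-1}(M,k)\xrightarrow{T_{i-1}(f)}\cdots ,
\]
with $\delta_i=\sigma_i\circ T_i(p')$; together with $L\cong C(f)$ this yields the claimed four-term exact sequence for $i\geq 2$.

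The step I expect to cost the most care is the last one: keeping the index bounds straight — the isomorphisms $\sigma_i$ and $\sigma_{i+1}$ require $i\geq 2$, which is exactly why the statement is restricted to $i\geq 2$ — and checking that under the identification $\Tor^A_i(\Om^{-1}(M),k)\cong\Tor^A_{i-1}(M,k)$ the relevant connecting homomorphism becomes, up to sign, precisely $T_{i-1}(f)$; this last point is the commutativity of the connecting-homomorphism square attached to the pushout diagram. Everything else is a routine use of the long exact sequence in $\Tor$ together with the vanishing $\Tor^A_i(F,k)=0$ for $F$ free and $i\geq 1$.
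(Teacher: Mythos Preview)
Your argument is correct; parts (1) and (2) are exactly as in the paper, and your proof of (3) is valid. However, for (3) the paper takes a shorter path: instead of working with the bottom row $0\rt N\rt C(f)\rt\Om^{-1}(M)\rt 0$ of the pushout and then comparing long exact sequences to identify $\Tor^A_i(\Om^{-1}(M),k)$ with $\Tor^A_{i-1}(M,k)$, the paper first \emph{rotates} the given triangle to $\Omega(L)\rt M\rt N\rt L$. The cone construction applied to this rotated triangle yields directly a short exact sequence $0\rt M\rt N\oplus F\rt L\rt 0$ in $\md(A)$ with $F$ free, and the ordinary long exact sequence in $\Tor^A_\bullet(-,k)$ then gives the four-term sequence immediately (using only that $\Tor^A_i(F,k)=0$ for $i\geq 1$). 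This avoids the morphism-of-long-exact-sequences step and the careful identification of $\partial^{\mathrm{bot}}_i$ with $T_{i-1}(f)\circ\sigma_i$. On the other hand, your route has the advantage of naming the maps explicitly --- in particular you verify that the first arrow really is $T_i(f)$ and you give a formula $\delta_i=\sigma_i\circ T_i(p')$ for the connecting map --- whereas the paper's one-line argument leaves the identification of the maps implicit.
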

\begin{proof}
(1) This follows from the fact that if $g \colon M \rt N$ factors through a free module then $\Tor^A_i(g, k) = 0$ for $i > 0$.

(2) This follows from (1).

(3) As rotations of exact triangles are exact,  we get that
there is an exact triangle $\Omega(L) \rt M \rt N \rt L$. By construction of exact triangles in $\CMS(A)$ we may assume that there exists
an exact sequence  $0 \rt M \rt N\oplus F \rt L \rt 0$ in $\text{mod}(A)$ (where $F$ is a finitely generated free
$A$-module). So we have a long exact sequence in homology
\[
\Tor^A_i(M, k)  \rt \Tor^A_i(N, k)  \rt \Tor^A_i(N, k) \xrightarrow{\delta_i} \Tor^A_{i-1}(M, k)
\]
\end{proof}

\section{section of a module over complete intersection}
Let $(A,\m)$ be an  abstract complete intersection of co-dimension $c$. For $1 \leq r \leq c$ let $\CMS_{\leq r}(A)$ be the thick subcategory of $\CMS(A)$ generated by MCM $A$-modules with complexity $\leq r$. If $c \geq 2$ then  for $r \geq 2$ we let $\tau_r(A) = \CMS_{\leq r}(A)/\CMS_{\leq r-1}(A)$ be the
Verdier quotient.
For systemic reasons we set $\tau_1(A) = \CMS_{\leq 1}(A)$.
We say an object in a $X$ triangulated category
$\mathcal{C}$ with shift functor $\Sigma$ is periodic  if $\Sigma^{s}(X) = X$ for some $s \geq 1$. We show
\begin{theorem}\label{periodic}
Let $(A,\m)$ be an abstract  complete intersection of co-dimension $c$. Fix $i$ with $i = 1, \ldots, c$ and let $X \in \tau_i(A)$. Then $X$ is periodic with period $ =  2s $, for some $s \geq 1$ (here $s$ may depend on $X$). If the residue field of $A$ is infinite then we can choose $s = 1$.
\end{theorem}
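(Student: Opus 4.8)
\emph{Plan.} The idea is to reduce the assertion to a single concrete construction: given a module $M$ representing $X$, produce a self-map $\phi\colon M\rt\Om^{-2s}M$ in $\CMS(A)$ whose mapping cone has strictly smaller complexity, so that $\phi$ becomes an isomorphism in the Verdier quotient $\tau_i(A)$. First I would note that $\CMS_{\leq i}(A)$ \emph{equals} the full subcategory of $\CMS(A)$ of MCM modules of complexity $\leq i$: that full subcategory is closed under $\Om^{\pm 1}$ (complexity is a syzygy invariant), under mapping cones (for $f\colon M\rt N$ the sequence $0\rt N\rt C(f)\rt\Om^{-1}M\rt 0$ of \ref{T-struc} forces $\cx_A C(f)\leq\max\{\cx_A M,\cx_A N\}$, and rotating the triangle handles the other two vertices), and under direct summands; hence it is a thick subcategory containing the generators of $\CMS_{\leq i}(A)$, and is contained in it. So, up to free summands, $X$ is an MCM module $M$ with $\cx_A M\leq i$; if $\cx_A M\leq i-1$ then $X=0$ in $\tau_i(A)$ and we are done, so assume $\cx_A M=i\geq 1$. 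Since a morphism of $\CMS(A)$ becomes an isomorphism in $\tau_i(A)=\CMS_{\leq i}(A)/\CMS_{\leq i-1}(A)$ exactly when its cone lies in $\CMS_{\leq i-1}(A)$, it suffices to find $\phi\colon M\rt\Om^{-2s}M$ in $\CMS(A)$ with $C(\phi)\in\CMS_{\leq i-1}(A)$ (read $\CMS_{\leq 0}(A)=0$ when $i=1$).

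\emph{Construction of $\phi$.} Complexities and $\Ext$ into $k$ are unchanged by completion, and the basic triangles of \ref{T-struc} are built from push-outs and so survive completion; thus I may compute with the geometric complete intersection $\wh A=Q/(f_1,\dots,f_c)$. Let $S=k[\chi_1,\dots,\chi_c]$ with $\deg\chi_j=2$ be the ring of cohomology operators of this presentation. Then $E:=\Ext^*_A(M,k)=\Ext^*_{\wh A}(\wh M,k)$ is a finitely generated graded $S$-module with $\dim_S E=\cx_A M=i\geq 1$ (Gulliksen \cite{Gull}; Avramov--Buchweitz \cite{avr-b}). By Proposition \ref{filt-deg2} applied to $E$ (with $d=c$, $r=i$) there is a homogeneous $\chi\in S_{2s}$, with $s=1$ if $k$ is infinite, such that $\ker\!\big(E(-2s)\xrightarrow{\ \chi\ }E\big)$ has finite length. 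Let $\bar\chi\in\Ext^{2s}_{\wh A}(\wh M,\wh M)\cong\sHom_{\wh A}(\wh M,\Om^{-2s}\wh M)$ be the image of $\chi$ under the $S$-action on $\mathrm{id}_{\wh M}$; it gives $\hat\phi\colon\wh M\rt\Om^{-2s}\wh M$ in $\CMS(\wh A)$. Because each $\Ext^j_A(M,k)$ is annihilated by $\m$, the induced map $T_*$ of Lemma \ref{betti-stable} depends on a morphism only modulo $\m\cdot\sHom$; since $\sHom_A(M,\Om^{-2s}M)\rt\sHom_{\wh A}(\wh M,\Om^{-2s}\wh M)$ has dense image (it is the $\m$-adic completion map of the finitely generated $A$-module $\sHom_A(M,\Om^{-2s}M)$), I can pick $\phi\in\sHom_A(M,\Om^{-2s}M)$ with $\wh\phi\equiv\hat\phi$ modulo $\m\cdot\sHom$; the map induced by $\phi$ on $E$ is then multiplication by $\chi$.

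\emph{The cone.} Apply $\Ext^*_{\wh A}(-,k)$ to the basic triangle $M\xrightarrow{\ \phi\ }\Om^{-2s}M\rt C(\phi)\rt\Om^{-1}M$ and use $\Ext^j_{\wh A}(\Om^{-t}N,k)\cong\Ext^{j-t}_{\wh A}(N,k)$ (valid for $j\gg 0$, which is all complexity sees): one obtains a long exact sequence of graded $S$-modules exhibiting $\Ext^*_{\wh A}(C(\phi),k)$ as an extension of a shift of $\ker(\chi\colon E\rt E)$ by a shift of $E/\chi E$, the pertinent connecting map being Yoneda multiplication by $\chi$ — the standard identification of the connecting map of this triangle with the cohomology operator (cf.\ \ref{ext-triangles:} and \cite{avr-b}). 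As $\chi$ is almost regular on $E$ and $\dim_S E=i\geq 1$, we get $\dim_S(E/\chi E)=i-1$ and $\dim_S\ker(\chi)\leq 0$, hence $\cx_A C(\phi)=\dim_S\Ext^*_{\wh A}(C(\phi),k)\leq i-1$. Thus $C(\phi)\in\CMS_{\leq i-1}(A)$ (and when $i=1$, $\cx_A C(\phi)=0$, i.e.\ $C(\phi)$ is free), so $\phi$ is an isomorphism $X=M\xrightarrow{\ \sim\ }\Om^{-2s}M$ in $\tau_i(A)$; hence $X$ is periodic of period $2s$, with $s=1$ when $k$ is infinite.

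\emph{Main obstacle.} The substantive input is the compatibility used for the cone: that the connecting homomorphism attached to the mapping cone of ``multiplication by a cohomology operator'' is again that operator acting on $\Ext^*$, combined with the Avramov--Buchweitz equality $\cx_A N=\dim_S\Ext^*_{\wh A}(\wh N,k)$. Only through these does Proposition \ref{filt-deg2}, a purely commutative-algebra statement about graded modules over $k[\chi_1,\dots,\chi_c]$, acquire control over the complexity of a mapping cone in $\CMS(A)$; and the finite-field phenomenon (needing a degree $2s$ rather than degree $2$ operator) is precisely the content of that proposition. A secondary point is that $A$ is only an \emph{abstract} complete intersection, so the operators live over $\wh A$; the density/perturbation step above — legitimate because $\Ext(-,k)$ is $\m$-torsion, hence insensitive to completion — is what transports the construction back to $\CMS(A)$. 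With these in place the argument is formal.
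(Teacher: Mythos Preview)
Your proposal is correct and follows essentially the same approach as the paper: both pick a filter-regular cohomology operator $\chi$ of degree $2s$ on $\Ext^*_A(M,k)$ (degree $2$ when $k$ is infinite, via Proposition~\ref{filt-deg2}), realize it as a morphism in the stable category, and observe that its cone/kernel drops complexity by one, so the morphism becomes an isomorphism in $\tau_i(A)$. The paper isolates this construction as Theorem~\ref{slice}, producing an honest surjection $\Om^{n_0+2s}M\twoheadrightarrow\Om^{n_0}M$ in $\md(A)$ (hence a short exact sequence with kernel $K$ of complexity $i-1$) and handling the descent from $\wh A$ to $A$ via Avramov's ``small'' maps (\ref{small}); you instead work directly with the triangle $M\to\Om^{-2s}M\to C(\phi)\to\Om^{-1}M$ and use the simpler observation that $\Tor_*(-,k)$ depends on a morphism only modulo $\m\cdot\sHom$, which suffices for your purposes and is a genuine streamlining.
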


\s To prove Theorem \ref{periodic} we need the notion of cohomological operators over a complete intersection ring; see \cite{Gull} and
\cite{Eis}.
 Let $\mathbf{f} = f_1,\ldots,f_c$ be a regular sequence in a  local Noetherian ring $Q$. Set $I = (\mathbf{f})$ and
 $ A = Q/I$,
\s

The \emph{Eisenbud operators}, \cite{Eis}  are constructed as follows: \\
Let $\mathbb{F} \colon \cdots \rightarrow F_{i+2} \xrightarrow{\partial} F_{i+1} \xrightarrow{\partial} F_i \rightarrow \cdots$ be a complex of free
$A$-modules.

\emph{Step 1:} Choose a sequence of free $Q$-modules $\wt{F}_i$ and maps $\wt{\partial}$ between them:
\[
\wt{\mathbb{F}} \colon \cdots \rightarrow \wt{F}_{i+2} \xrightarrow{\wt{\partial}} \wt{F}_{i+1} \xrightarrow{\wt{\partial}} \wt{F}_i \rightarrow \cdots
\]
so that $\mathbb{F} = A\otimes\wt{\mathbb{F}}$

\emph{Step 2:} Since $\wt{\partial}^2 \equiv 0 \ \text{modulo} \ (\mathbf{f})$, we may write  $\wt{\partial}^2  = \sum_{j= 1}^{c} f_j\wt{t}_j$ where
$\wt{t_j} \colon \wt{F}_i \rightarrow \wt{F}_{i-2}$ are linear maps for every $i$.

 \emph{Step 3:}
Define, for $j = 1,\ldots,c$ the map $t_j = t_j(Q, \mathbf{f},\mathbb{F}) \colon \mathbb{F} \rightarrow \mathbb{F}(-2)$ by $t_j = A\otimes\wt{t}_j$.

\s
The operators $t_1,\ldots,t_c$ are called Eisenbud's operator's (associated to $\mathbf{f}$) .  It can be shown that
\begin{enumerate}
\item
$t_i$ are uniquely determined up to homotopy.
\item
$t_i, t_j$ commute up to homotopy.
\end{enumerate}
\s Let $R = A[t_1,\ldots,t_c]$ be a polynomial ring over $A$ with variables $t_1,\ldots,t_c$ of degree $2$. Let $M, N$ be  finitely generated $A$-modules. By considering a free resolution $\mathbb{F}$ of $M$ we get well defined maps
\[
t_j \colon \Ext^{n}_{A}(M,N) \rightarrow \Ext^{n+2}_{A}(M,N) \quad \ \text{for} \ 1 \leq j \leq c  \ \text{and all} \  n,
\]
which turn $\Ext_A^*(M,N) = \bigoplus_{i \geq 0} \Ext^i_A(M,N)$ into a module over $R$. Furthermore these structure depend  on $\Bf$, are natural in both module arguments and commute with the connecting maps induced by short exact sequences.

\s  Gulliksen, \cite[3.1]{Gull},  proved that if $\projdim_Q M$ is finite then
$\Ext_A^*(M,N) $ is a finitely generated $R$-module. By taking $N = k$ we get that the function $n \rt \beta_n(M)$ is quasi-polynomial of period $2$ and degree $\cx M - 1$.

The following result is crucial to prove some of our results. When $A$ is a geometric complete intersection with infinite residue field then Theorem \ref{slice}  essentially
follows from techniques in \cite[7.3]{avr-gp}.
\begin{theorem}\label{slice}
Let $(A,\m)$ be a complete intersection  with residue field $k$ and let $M$ be a MCM $A$-module of complexity $\geq 2$. Set $M_i = \Omega^i(M)$ for
$i \geq 0$.  Then there exists $s$ and $n_0$ such that we have a surjective map $M_{n_0 + 2s} \xrightarrow{\alpha} M_{n_0}$ satisfying the following  properties:
\begin{enumerate}[\rm (1)]
\item
The maps $\Tor^A_i(\alpha, k) $ is surjective for all $i \geq 0$.
\item
$\cx \ker \alpha = \cx M   - 1$.
\item
If $k$ is infinite we can choose $s = 1$.
\end{enumerate}
\end{theorem}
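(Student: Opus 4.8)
\emph{Plan of proof.} We may assume $A = Q/(\Bf)$ with $(Q,\q)$ regular local and $\Bf = f_1,\dots,f_c$ a $Q$-regular sequence; if $A$ is only an abstract complete intersection one first passes to $\wh A$, which changes neither $\beta_n(M)$, nor $\cx_A M$, nor---after a routine approximation of $\wh A$-maps by $A$-maps using flatness of completion---the existence of a map $\alpha$ with the stated properties. Let $\Fb\colon \cdots \to F_{i+1}\xrightarrow{\partial}F_i\to\cdots$ be the minimal $A$-free resolution of $M$ and let $t_1,\dots,t_c\colon \Fb\to\Fb(-2)$ be the Eisenbud operators associated to $\Bf$. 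By Gulliksen, $E:=\Ext_A^*(M,k)$ is a finitely generated graded module over $S = k[t_1,\dots,t_c]$ (with $\deg t_j = 2$), and its Hilbert function $n\mapsto\beta_n(M)$ is quasi-polynomial of period $2$ and degree $\cx_A M - 1$; in particular $\dim_S E = \cx_A M =: c' \geq 2$. First I would apply Proposition \ref{filt-deg2} to $E$, a graded $S$-module of dimension $\geq 1$: this produces a homogeneous $x\in S$ of degree $2s$ (with $s=1$ if $k$ is infinite) such that $\ker(E(-2s)\xrightarrow{x}E)$ has finite length, equivalently $x\colon E_m\to E_{m+2s}$ is injective for all $m\geq N$, some $N$. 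Lifting $x$ to a polynomial combination $\chi = \sum_{|J|=s}a_J t^J$ of the $t_j$ with $a_J\in A$, one gets a chain map $\chi\colon\Fb\to\Fb(-2s)$ whose induced self-map of $E$ (which $\m$ annihilates) is exactly multiplication by $x$.

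Next I would build $\alpha$. Fix $n_0 = \max\{N,1\}$ and write $c_n\colon F_n\to F_{n-2s}$ for the components of $\chi$. The chain-map relation $\partial c_n = c_{n-1}\partial$ shows $c_{n-1}$ carries $\Om^{n}(M)=\ker(\partial\colon F_{n-1}\to F_{n-2})$ into $\Om^{n-2s}(M)$; let $\alpha\colon M_{n_0+2s}\to M_{n_0}$ be the resulting map. The truncations $\Fb_{\geq n_0}$ and $\Fb_{\geq n_0+2s}$ are (shifted) minimal free resolutions of $M_{n_0}$ and $M_{n_0+2s}$, the truncation of $\chi$ lifts $\alpha$ through them, and since these resolutions are minimal one finds, under the canonical isomorphisms $\Tor^A_i(\Om^j M,k)\cong\Tor^A_{i+j}(M,k)$ ($i\geq 0$), that $\Tor^A_i(\alpha,k) = c_{n_0+2s+i}\otimes k$, i.e. the $k$-dual of $x\colon\Ext_A^{\,n_0+i}(M,k)\to\Ext_A^{\,n_0+2s+i}(M,k)$. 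Since $n_0+i\geq N$ for all $i\geq 0$, the latter is injective, so $\Tor^A_i(\alpha,k)$ is surjective for every $i\geq 0$; this proves (1). Taking $i=0$ and applying Nakayama shows $\alpha$ is surjective.

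For (2), put $K = \ker\alpha$, so $0\to K\to M_{n_0+2s}\xrightarrow{\alpha}M_{n_0}\to 0$ is exact. Since $\Tor^A_i(\alpha,k)$ is onto for all $i$, the connecting maps in the long exact $\Tor$ sequence vanish, giving short exact sequences $0\to\Tor^A_i(K,k)\to\Tor^A_i(M_{n_0+2s},k)\to\Tor^A_i(M_{n_0},k)\to 0$, hence
\[
\beta_i(K) = \beta_{n_0+2s+i}(M) - \beta_{n_0+i}(M)\qquad(i\geq 0).
\]
Now $n\mapsto\beta_n(M)$ agrees on each residue class mod $2$ with a polynomial of degree $\leq c'-1$, and on at least one such class with one of degree exactly $c'-1\geq 1$ and positive leading coefficient. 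As $n_0+i$ and $n_0+2s+i$ have the same parity, the difference above agrees on each residue class mod $2$ with a polynomial of degree $\leq c'-2$, and of degree exactly $c'-2\geq 0$ on the appropriate class. Therefore $\cx K = (c'-2)+1 = \cx_A M - 1$, which is (2); and (3) is built into the choice of $x$ in the first step.

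The technical heart---and the step I expect to be the main obstacle---is the identification in the second paragraph: that restricting the Eisenbud chain map $\chi$ to syzygies induces on $\Tor^A(M,k)$ precisely the cohomology operator $x$ (equivalently, the $k$-dual of the action of $x$ on $\Ext_A^*(M,k)$). This requires carefully matching the restriction of $\chi$ with the standard syzygy/$\Tor$ bookkeeping through the truncated minimal resolutions. A secondary, more routine point is the descent of $\alpha$ from $\wh A$ to $A$ in the genuinely abstract complete intersection case; everything else follows by assembling Proposition \ref{filt-deg2}, Gulliksen's finiteness theorem, the long exact $\Tor$ sequence, and Nakayama's lemma.
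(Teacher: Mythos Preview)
Your approach is essentially the paper's: pick a filter-regular homogeneous element $x$ on $E=\Ext^*_A(M,k)$ via Proposition~\ref{filt-deg2}, lift it to a chain map on the minimal resolution, restrict to syzygies, and use Nakayama plus the long exact $\Tor$-sequence. Your identification of ``the technical heart'' is accurate, and your computation of $\cx K$ via Betti-number differences is a slightly more explicit variant of the paper's one-line appeal to the short exact $\Tor$-sequences (both rely on $x$ being filter-regular on a module of dimension $\cx_A M$).

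The one place where you underestimate the work is the descent from $\wh A$ to $A$ in the genuinely abstract case. It is \emph{not} a routine flatness approximation: an arbitrary $A$-linear approximation of $\alpha\in\Hom_{\wh A}(\wh{M}_{n_0+2s},\wh{M}_{n_0})$ need not induce the same maps on $\Tor$. The paper uses Avramov's small-homomorphism technique (see \ref{small}): choose $l$ so that the projection $\pi\colon \wh{M_{n_0}}\to \wh{M_{n_0}}/\wh\m^{\,l}\wh{M_{n_0}}$ is small, write $\alpha=\wt\alpha\otimes 1+\beta$ with $\wt\alpha\in\Hom_A(M_{n_0+2s},M_{n_0})$ and $\beta\in\wh\m^{\,l}\Hom_{\wh A}(\cdot,\cdot)$, and then observe that $\pi\circ\beta=0$ forces $\Tor^{\wh A}_*(\beta,k)=0$ by smallness of $\pi$. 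Hence $\Tor^A_*(\wt\alpha,k)$ is surjective and $\wt\alpha$ does the job. You should replace your parenthetical remark with this argument.
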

\begin{proof}
We first consider the case when $A$ is complete. Then $A = Q/(\Bf)$ where $(Q, \n)$ is a complete regular local ring and $\Bf = f_1,\ldots, f_c \in \n^2$ is a $Q$-regular sequence. Let $\Fb$ be a minimal resolution of $M$ as an $A$-module and let $t_1,\ldots,t_c \colon \Fb(+2) \rt \Fb $ be Eisenbud operators corresponding to $\Bf$ and $\Fb$. Set $E(M) = \bigoplus_{n \geq 0}\Ext^n_A(M, k)$. It is a finitely generated graded $R = A[t_1,\ldots, t_c]$-module where $\deg t_i = 2$ for all $i$.
As $\m E(M) = 0$ we get that $E(M)$ is a finitely generated $S = k[t_1,\ldots, t_c]$-module. We may choose $t$ homogeneous of degree $2s $ in $S$ such that $(0 \colon_{E(M)} t)$ has finite length (equivalently $t$ is $E(M)$-filter regular). If $k$ is infinite it is readily seen that we may choose $t$ linear in $t_i$ and so of degree $2$ (see \ref{filt-deg2}).
We have that $t$ induces injections $\Ext^{n}_A(M, k) \rt \Ext^{n+ 2s}_A(M, k)$ for $n \gg 0$ say for $n \geq n_0$. Dualizing we get surjections $\Tor^A_{n+2s}(M, k) \rt \Tor^A_{n}(M, k)$ for $n \geq n_0$. Let $\xi$ be homogeneous of degree $2s$ in $R$ such that its image in $S$ is $t$. So we have a chain map $\xi \colon \Fb(-2s) \rt \Fb$. As $\Tor^A_{n}(\xi, k)$ is surjective for $n \geq n_0$, it follows from Nakayama's lemma  that we have surjections $\Fb_{n + 2s} \rt \Fb_n$ for $n \geq n_0$. Note $\xi$ is a chain map. So we have exact sequences $ 0 \rt L_n \rt M_{n+ 2s} \rt M_n \rt 0$ for $n \geq n_0$. Also note that we also have exact sequences  for $i \geq 0$
\[
\Tor^A_{i}(M_{n_0 + 2s}, k) \rt \Tor^A_{i}(M_{n_0}, k) \rt 0.
\]
 It follows that we have exact sequence
\[
 0 \rt \Tor^A_{i}(L_{n_0}, k) \rt \Tor^A_{i}(M_{n_0 + 2s}, k) \rt \Tor^A_{i}(M_{n_0}, k) \rt 0.
\]
So $\cx L_{n_0} = \cx M - 1$.
The result follows if $A$ is complete.

We now consider the general case. Note $\Omega_i^A(M)\otimes \widehat{A} \cong \Omega_i^{\widehat A}(\widehat{M})$.
By the complete case we have an exact sequence
\[
0 \rt L \rt \widehat{M}_{n_0 + 2s} \xrightarrow{\alpha}  \widehat{M}_{n_0} \rt 0
\]
which does the job.
Notice
\[
\alpha \in \Hom_{\widehat{A}}(\widehat{M}_{n_0 + 2s}, \widehat{M}_{n_0}) = \Hom_A(M_{n_0 + 2s}, M_{n_0})\otimes_A \widehat{A}.
\]
Set $\alpha = \sum_{i = 1}^{r} \phi_i \otimes a_i$ where
$\phi_i \in \Hom_A(M_{n_0 + 2s}, M_{n_0})$ and $a_i \in \widehat{A}$.

Now assume that the map $\widehat{M_{n_0}} \rt \widehat{M_{n_0}}/\widehat{\m}^l \widehat{M_{n_0}}$ is small (see \ref{small}).
Let $a_i = b_i + c_i$ with $b_i \in A$ and $ c_i \in \widehat{\m}^l$.
So we get $\alpha = \widetilde{\alpha}\otimes_A 1 + \beta$ where
 $\widetilde{\alpha} \in \Hom_A(M_{n_0 + 2s}, M_{n_0})$ and
 $\beta \in \widehat{\m}^l\Hom_{\widehat{A}}(\widehat{M}_{n_0 + 2s}, \widehat{M}_{n_0})$
It follows that
\[
\Tor^{\widehat{A}}_*(\alpha, k) = \Tor^{A}_*(\alpha, k)\otimes_A\widehat{A}  + \Tor^{\widehat{A}}_*(\beta, k)
\]
We have a commutative diagram
\[
\xymatrix{
\widehat{M_{n_0 + 2s}}
\ar@{->}[d]_{\beta}
\ar@{->}[dr]^{0}
 \\
 \widehat{M_{n_0}}
\ar@{->}[r]_{\pi}
& \widehat{M_{n_0}}/\widehat{\m}^l \widehat{M_{n_0}}
}
\]
This yields a commutative diagram
\[
\xymatrix{
\Tor^{\widehat{A}}_*(\widehat{M_{n_0 + 2s}}, k)
\ar@{->}[d]_{\Tor^{\widehat{A}}_*(\beta, k)}
\ar@{->}[dr]^{0}
 \\
 \Tor^{\widehat{A}}_*(\widehat{M_{n_0}}, k)
\ar@{->}[r]_{\Tor^{\widehat{A}}_*(\pi, k)}
& \Tor^{\widehat{A}}_*(\widehat{M_{n_0}}/\widehat{\m}^l \widehat{M_{n_0}}, k)
}
\]
As $\Tor^{\widehat{A}}_*(\pi, k)$ is injective it follows that
 $ \Tor^{\widehat{A}}_*(\beta, k) = 0$.
 So we have
 $$ \Tor^{\widehat{A}}_*(\widetilde{\alpha} \otimes 1, k)  =
 \Tor^{\widehat{A}}_*(\alpha, k)$$
 It follows that $ \Tor^{A}_*(\widetilde{\alpha} , k)$ is surjective. Set
 $K = \ker\widetilde{\alpha}$. We have a short exact sequence
 $$ 0 \rt K \rt M_{n_0 + 2s} \rt M_{n_0} \rt 0$$
 which satisfies our requirements.
\end{proof}

We now give:
\begin{proof}[Proof of Theorem \ref{periodic}]
It is well-known that if $M$ is MCM with complexity one then it is periodic with period $2$. So $\tau_1(A)$ is periodic.
Now let $r \geq 2$ and let $M \in \tau_r(A)$ be non-zero. Then $M$ as an $A$-module has complexity $= r$. Set $M_n = \Omega_n(M)$. Then by Theorem \ref{slice} there exists an exact sequence $0 \rt K \rt M_{n+2s} \rt M_n \rt 0$ with complexity $K = r-1$. So in $\CMS(A)$ we have an exact triangle
$K \rt M_{n+2s} \rt M_n \rt \Omega^{-1}{K}$. It follows that in $\tau_r(A)$ we have $M_{n + 2s} \cong M_n$ and so $M_{2s} \cong M$. So $M$ is periodic with period $2s$. Also by our construction in Theorem \ref{slice} we may choose $s = 1$ if the residue field of $A$ is infinite.
\end{proof}

We now give a proof of Theorem \ref{obst-tri}. It is convenient to prove a slightly more general result.
\begin{theorem}\label{ascend-periodic}
Let $\mathcal{C}$ be a triangulated category. Let $\mathcal{C}_1 \subseteq \mathcal{C}_2  \subseteq
\ldots$ be an ascending sequence of thick triangulated subcategories of $\mathcal{C}$ such that
\begin{enumerate}[\rm (1)]
  \item $\mathcal{C} = \bigcup_{i \geq 1} \mathcal{C}_i$.
  \item $\mathcal{C}_1$ is periodic.
  \item The Verdier quotient $\mathcal{C}_i/\mathcal{C}_{i-1}$ is periodic for all $i \geq 2$.
\end{enumerate}
If $f \colon \mathcal{C} \rt \mathcal{H}$ is a non-zero triangulated functor then there exists a non-zero
periodic object  $X$ in $\mathcal{H}$. Furthermore if $\mathcal{C}_1$ and $\mathcal{C}_i/\mathcal{C}_{i-1}$ are $2$-periodic for $i \geq 2$ then we may choose $X$ to be two periodic.
\end{theorem}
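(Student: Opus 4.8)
The plan is to pass immediately to a \emph{finite} filtration and induct on its length. Since $f\neq 0$ there is an object $Y$ of $\mathcal{C}$ with $f(Y)\neq 0$, and by (1) we have $Y\in\mathcal{C}_n$ for some $n$; restricting $f$ to $\mathcal{C}_n$ yields a non-zero triangulated functor on a triangulated category equipped with a length-$n$ filtration of the same type. So it suffices to prove, by induction on $n\geq 1$, the following statement: if $\mathcal{D}_1\subseteq\cdots\subseteq\mathcal{D}_n=\mathcal{D}$ is a filtration of a triangulated category by thick triangulated subcategories with $\mathcal{D}_1$ periodic and each $\mathcal{D}_i/\mathcal{D}_{i-1}$ ($2\leq i\leq n$) periodic, and $g\colon\mathcal{D}\rt\mathcal{H}$ is a non-zero triangulated functor, then $\mathcal{H}$ has a non-zero periodic object; moreover, if $\mathcal{D}_1$ and all $\mathcal{D}_i/\mathcal{D}_{i-1}$ are $2$-periodic, that object can be taken $2$-periodic.

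For the base case $n=1$ I would simply pick $Y\in\mathcal{D}_1$ with $g(Y)\neq 0$, use periodicity of $\mathcal{D}_1$ to get $\Sigma^s Y\cong Y$ for some $s\geq 1$, and recall that a triangulated functor commutes with the shift, so $g(Y)\cong\Sigma^s g(Y)$ is a non-zero periodic object (with $s=2$ in the $2$-periodic case). For the inductive step, fix $Y\in\mathcal{D}$ with $g(Y)\neq 0$. If $Y$ already lies in $\mathcal{D}_{n-1}$, then $g|_{\mathcal{D}_{n-1}}$ is non-zero and the inductive hypothesis, applied to the filtration $\mathcal{D}_1\subseteq\cdots\subseteq\mathcal{D}_{n-1}$, finishes the argument. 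Otherwise the image $q(Y)$ of $Y$ under the Verdier quotient $q\colon\mathcal{D}\rt\mathcal{D}/\mathcal{D}_{n-1}$ is non-zero (thickness of $\mathcal{D}_{n-1}$ is exactly what makes $q(X)=0$ equivalent to $X\in\mathcal{D}_{n-1}$), so by periodicity of the quotient there is $s\geq 1$ with $\Sigma^s q(Y)\cong q(Y)$.

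The crux is to turn this isomorphism in the quotient into genuine data in $\mathcal{D}$. Using the calculus of fractions I would represent it by a roof $Y\xleftarrow{u}Z\xrightarrow{w}\Sigma^s Y$ in $\mathcal{D}$ with $\mathrm{cone}(u)\in\mathcal{D}_{n-1}$ (so that $q(u)$ is invertible), and then note that since the roof represents an \emph{isomorphism}, $q(w)$ is invertible too, and hence $\mathrm{cone}(w)\in\mathcal{D}_{n-1}$ as well. Completing $u$ and $w$ to triangles $Z\to Y\to C_u\to\Sigma Z$ and $Z\to\Sigma^s Y\to C_w\to\Sigma Z$ with $C_u,C_w\in\mathcal{D}_{n-1}$ and applying $g$, I would split into two cases. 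If $g(C_u)\neq 0$ or $g(C_w)\neq 0$, then $g|_{\mathcal{D}_{n-1}}$ is non-zero and the inductive hypothesis produces the desired object. If $g(C_u)=g(C_w)=0$, then $g(u)$ and $g(w)$ are isomorphisms (each is a morphism sitting in a triangle whose third vertex vanishes), so $g(Y)\cong g(Z)\cong\Sigma^s g(Y)$; since $g(Y)\neq 0$ this is our non-zero periodic object. For the $2$-periodic refinement one just carries $s=2$ through every step.

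I expect the main obstacle to be the bookkeeping around the Verdier quotient in the inductive step: justifying the fraction/roof description of morphisms in $\mathcal{D}/\mathcal{D}_{n-1}$, and in particular checking that \emph{both} legs of the roof, not merely the ``denominator'' $u$, have cone in $\mathcal{D}_{n-1}$ — this is precisely what makes the dichotomy on $g(C_u),g(C_w)$ legitimate. Everything else is a clean descent: when we drop one level, the ``error terms'' produced lie in $\mathcal{D}_{n-1}$, so either $g$ does not annihilate them (and we recurse) or $g$ annihilates them (and $g(Y)$ is itself periodic).
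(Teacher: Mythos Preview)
Your argument is correct, but the paper takes a much shorter route that avoids both the induction and the calculus of fractions. Instead of descending level by level, the paper simply locates the \emph{minimal} index $r$ with $f(\mathcal{C}_r)\neq 0$ (which exists since $f\neq 0$ and $\mathcal{C}=\bigcup_i\mathcal{C}_i$). If $r=1$ the conclusion is immediate, exactly as in your base case. If $r\geq 2$ then $f(\mathcal{C}_{r-1})=0$, so by the universal property of the Verdier quotient the restriction $f|_{\mathcal{C}_r}$ factors through a triangulated functor $\bar f\colon \mathcal{C}_r/\mathcal{C}_{r-1}\to\mathcal{H}$; this $\bar f$ is non-zero, and any object in the (periodic) quotient with non-zero image under $\bar f$ yields a periodic object in $\mathcal{H}$. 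The $2$-periodic refinement is then automatic.

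So the paper's one-line move---``go to the first stratum where $f$ is non-zero and factor through the quotient''---replaces your roof decomposition and dichotomy on whether $g$ annihilates the cones. Your approach has the virtue of working entirely with objects and triangles, never invoking the factorization property of Verdier quotients; but that property is precisely the right tool here and makes the argument essentially trivial. The bookkeeping you flagged as the ``main obstacle'' (verifying both legs of the roof have cone in $\mathcal{D}_{n-1}$) is correct as you stated it, so nothing is wrong---it is just more work than necessary.
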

\begin{proof}
We note that either $f(\mathcal{C}_1) \neq 0$ or there exists $r \geq 2$ such that $f(\mathcal{C}_{r}) \neq 0$ but $f(\mathcal{C}_{r-1}) = 0$,
If $f(\mathcal{C}_1) \neq 0$ then if $X$ is a non-zero element in the image we get that $X$ is periodic.

If there exists $r \geq 2$ such that $f(\mathcal{C}_{r}) \neq 0$ but $f(\mathcal{C}_{r-1}) = 0$ then note that $f $ factors over the Verdier quotient $\ov{f} \colon \mathcal{C}_r/\mathcal{C}_{r-1} \rt H$ and is non-zero. It follows that if $\ov{f}(M) = X$ is non-zero then $X$ is periodic.
\end{proof}

We now give
\begin{proof}[Proof of Theorem \ref{obst-tri}]
The result follows from Theorem \ref{ascend-periodic} and Theorem \ref{periodic}.
\end{proof}

\s Let $(A,\m)$ be an abstract complete intersection of codimension $c$. Then $\CMS^0(A)$ the category of MCM $A$-modules free on $\Spec^0(A)$ is a thick subcategory of $\CMS(A)$.
 Set
$$ \CMS^0_{\leq i}(A) = \{ M \mid M\in \CMS^0(A) \ \text{and} \ \cx M \leq i \}. $$
Then $\CMS^0_{\leq i}(A)$ defines a thick subcategory of $\CMS(A)$. For $i = 2,\cdots, c$ set $V_i(A) = \CMS^0_{\leq i}(A)/\CMS^0_{\leq {i-1}}(A)$. For systemic reasons set
$V_1(A) = \CMS^0_{\leq 1}(A)$.
\begin{remark}
The categories $V_i(A)$ are non-zero for $i = 1,\ldots c$. It suffices to show there exist MCM $A$-modules  in  $\CMS^0(A)$ of complexity $i$ for $i = 1, \cdots, c$. We note that if $d = \dim A$ and $k$ is the residue field of $A$ then $\Om^d(k)$ is MCM $A$-module of complexity $c$. Using Theorem \ref{slice} iteratively our assertion follows.
\end{remark}
We prove:
\begin{theorem}\label{vb1}
(with hypotheses as above:) The triangulated categories d $V_i(A)$ for $i = 1,\cdots, c$ are periodic. If the residue field of $A$ is infinite then they are $2$-periodic.
\end{theorem}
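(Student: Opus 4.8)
The plan is to follow the proof of Theorem~\ref{periodic} almost verbatim, with the filtration $V_i(A)$ in place of $\tau_i(A)$; the one genuinely new point is that the short exact sequence produced by Theorem~\ref{slice} must be kept inside $\CMS^0(A)$. First I would dispose of $i=1$: an object of $V_1(A)=\CMS^0_{\leq 1}(A)$ is an MCM $A$-module $M$, free on $\Spec^0(A)$, with $\cx_A M\leq 1$; if $\cx_A M=0$ then $M$ is free, hence $0$ in $\CMS(A)$, while if $\cx_A M=1$ then $M$ is periodic of period $2$ by the well-known fact about complexity-one modules over a complete intersection used at the start of the proof of Theorem~\ref{periodic}. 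So $V_1(A)$ is $2$-periodic.

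For $2\leq i\leq c$, since the objects of the Verdier quotient $V_i(A)$ are precisely the objects of $\CMS^0_{\leq i}(A)$, I would take an arbitrary object to be an MCM $A$-module $M$, free on $\Spec^0(A)$, with $\cx_A M\leq i$. If $\cx_A M\leq i-1$ then $M\in\CMS^0_{\leq i-1}(A)$ is $0$ in $V_i(A)$ and nothing is to be proved, so assume $\cx_A M=i$ and set $M_n=\Omega^n(M)$. Applying Theorem~\ref{slice} to $M$ gives integers $s\geq 1$ and $n_0$, with $s=1$ when the residue field is infinite, and a surjection $\alpha\colon M_{n_0+2s}\to M_{n_0}$ whose kernel $K$ satisfies $\cx_A K=\cx_A M-1=i-1$. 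From the exact sequence
\[
0\to K\to M_{n_0+2s}\xrightarrow{\ \alpha\ } M_{n_0}\to 0
\]
the depth lemma shows $K$ is MCM, since both outer terms have depth $\dim A$. The key step, and the only one not already present in the proof of Theorem~\ref{periodic}, is to see that $K\in\CMS^0(A)$; I would argue that since $M$ is free on $\Spec^0(A)$ so are all its syzygies, in particular $M_{n_0+2s}$ and $M_{n_0}$, so for each prime $\mathfrak p\neq\m$ the localization $\alpha_{\mathfrak p}$ is a surjection of finitely generated free $A_{\mathfrak p}$-modules, which splits; hence $K_{\mathfrak p}$ is a direct summand of a free $A_{\mathfrak p}$-module and so free. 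With $\cx_A K=i-1$ this gives $K\in\CMS^0_{\leq i-1}(A)$.

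To finish, I would note that by \ref{ext-triangles:} the displayed short exact sequence yields an exact triangle $K\to M_{n_0+2s}\to M_{n_0}\to\Omega^{-1}(K)$ in $\CMS(A)$ all of whose terms lie in $\CMS^0_{\leq i}(A)$; in $V_i(A)$ the object $K$ is $0$, so this triangle forces $M_{n_0+2s}\cong M_{n_0}$ in $V_i(A)$. Since $\Omega$ carries $\CMS^0_{\leq j}(A)$ into itself for every $j$ (it preserves freeness on $\Spec^0(A)$, and complexity is a stable invariant), it descends to an autoequivalence of $V_i(A)$, inverse to the shift $\Sigma$; because $\Omega^{n_0}$ is an autoequivalence, $\Omega^{n_0}(\Omega^{2s}(M))=M_{n_0+2s}\cong M_{n_0}=\Omega^{n_0}(M)$ gives $\Omega^{2s}(M)\cong M$, equivalently $\Sigma^{2s}(M)\cong M$, in $V_i(A)$. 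Hence every object of $V_i(A)$ is periodic of period $2s$, with $s=1$ admissible once the residue field is infinite, which is the assertion. I expect the localize-and-split check in the middle paragraph --- that the kernel from Theorem~\ref{slice} is again free on the punctured spectrum --- to be the only point needing real care; everything else is a formal copy of the proof of Theorem~\ref{periodic}.
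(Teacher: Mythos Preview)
Your proposal is correct and follows essentially the same approach as the paper's own proof: both handle $V_1(A)$ via the standard fact on complexity-one modules, then for $i\geq 2$ apply Theorem~\ref{slice}, verify that the kernel $K$ is free on $\Spec^0(A)$ (the paper does this in one line, you supply the split-surjection argument), and use the resulting triangle in the Verdier quotient to conclude $\Omega^{2s}(M)\cong M$. Your write-up is somewhat more explicit in justifying that $K$ is MCM via the depth lemma and in invoking that $\Omega$ descends to an autoequivalence to cancel the $\Omega^{n_0}$, but the underlying argument is identical.
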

\begin{proof}
  Any MCM $A$-module with $\cx M \leq 1$ is periodic with period $2$. So $V_1(A)$ is periodic.

  Let $M \in V_i(A)$ be non-zero for some $i \geq 2$. Then $M$ as an $A$-module has complexity $i$. Set $M_n = \Omega^n(M)$. Then by Theorem \ref{slice} we get an MCM $A$-module $K$ of complexity $i-1$ and an exact sequence $0 \rt K \rt M_{n+2s} \rt M_n \rt 0$ in $A$ for some $s >0$ and for some $n \geq 0$. Note $(M_i)_P $ is free for all $P \neq \m$ and for all $i \geq 0$. It follows that $K_P$ is free for all $P \neq \m$. It follows that $K \in \CMS^0_{\leq i-1}(A)$. We also have an exact triangle $K \rt M_{n+2s} \rt M_n \rt \Omega^{-1}(K)$ in $\CMS^0_{\leq i}(A)$. So in $V_i(A)$ we get $M_{n+ 2s} \cong M_n$ and so $M_s \cong M$. Thus $V_i(A)$ for $i \geq 2$ are periodic. By \ref{slice} if the residue field of $A$ is infinite then we may choose $s = 1$. So in this case $V_i(A)$ for $i \geq 1$ are $2$-periodic.
\end{proof}

As an easy consequence we get
\begin{corollary}\label{vb-cor}
(with hypotheses as above:) Let $\mathcal{C}$ be a triangulated category and  let $f \colon \CMS^0(A) \rt \mathcal{C}$ be a non-zero triangulated functor.
Then there exists $0 \neq X \in \mathcal{C}$ which is periodic. If the residue field of $A$ is infinite then we can choose $X$ to be $2$-periodic.
\end{corollary}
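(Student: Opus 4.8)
The plan is to deduce this directly from Theorem \ref{vb1} together with the filtration of $\CMS^0(A)$ by the subcategories $\CMS^0_{\leq i}(A)$, exactly as Theorem \ref{obst-tri} was deduced from Theorem \ref{periodic} via Theorem \ref{ascend-periodic}. First I would observe that the chain
\[
\CMS^0_{\leq 1}(A) \subseteq \CMS^0_{\leq 2}(A) \subseteq \cdots \subseteq \CMS^0_{\leq c}(A) = \CMS^0(A)
\]
is an ascending sequence of thick triangulated subcategories whose union is $\CMS^0(A)$; this uses the fact (established just before Theorem \ref{vb1}) that each $\CMS^0_{\leq i}(A)$ is thick in $\CMS(A)$, hence thick in $\CMS^0(A)$, together with the bound $\cx_A M \leq c$ for every module over an abstract complete intersection of codimension $c$, which guarantees the chain terminates at $\CMS^0(A)$.

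Next I would check that this sequence satisfies hypotheses (1)--(3) of Theorem \ref{ascend-periodic} with $\mathcal{C}_i = \CMS^0_{\leq i}(A)$. Condition (1) is the union statement above. Condition (2) says $\mathcal{C}_1 = \CMS^0_{\leq 1}(A) = V_1(A)$ is periodic, and condition (3) says the Verdier quotient $\mathcal{C}_i/\mathcal{C}_{i-1} = V_i(A)$ is periodic for $i \geq 2$; both are precisely the content of Theorem \ref{vb1}. Applying Theorem \ref{ascend-periodic} to the non-zero triangulated functor $f \colon \CMS^0(A) \rt \mathcal{C}$ then produces a non-zero periodic object $X \in \mathcal{C}$, which is the first assertion. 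For the second assertion, if the residue field of $A$ is infinite then Theorem \ref{vb1} gives that $V_1(A)$ and each $V_i(A)$ are $2$-periodic, so the final sentence of Theorem \ref{ascend-periodic} applies and we may take $X$ to be $2$-periodic.

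There is essentially no obstacle here: the corollary is a formal consequence of the two cited results, and the only thing requiring a moment's care is confirming that $\CMS^0_{\leq i}(A)$ is thick \emph{as a subcategory of $\CMS^0(A)$} (not merely of $\CMS(A)$) so that the Verdier quotients $V_i(A)$ taken inside $\CMS^0(A)$ agree with those appearing in Theorem \ref{vb1} --- but since $\CMS^0(A)$ itself is thick in $\CMS(A)$ and contains each $\CMS^0_{\leq i}(A)$, thickness is inherited, and the quotients coincide. Thus the proof is just: apply Theorem \ref{vb1} and Theorem \ref{ascend-periodic}.
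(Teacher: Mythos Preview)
Your proposal is correct and follows exactly the paper's approach: the paper's proof is the single sentence ``The result follows from Theorem \ref{ascend-periodic} and Theorem \ref{vb1},'' and your write-up simply spells out the verification of the hypotheses of Theorem \ref{ascend-periodic} using the filtration $\CMS^0_{\leq i}(A)$ and Theorem \ref{vb1}.
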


\begin{proof}
  The result follows from Theorem \ref{ascend-periodic} and Theorem \ref{vb1}.
\end{proof}
\section{Proof of Theorem \ref{theorem-ci}}
In this section we first prove the following:
\begin{theorem}\label{basic}
  Let $(A,\m)$ be an abstract complete intersection of codimension $c$. Let $(B,\n)$ be a Gorenstein local ring.
  Assume there exists a triangulated functor $f \colon \CMS(A) \rt \CMS(B)$. If $M$ is an MCM $A$-module with $\cx_A M = r$ then $\cx_B f(M) \leq r$.
\end{theorem}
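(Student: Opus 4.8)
The plan is to prove the statement by induction on $r=\cx_A M$, splitting off one unit of complexity at a time with Theorem \ref{slice} and transporting the resulting Betti-number estimate across $f$ using Lemma \ref{betti-stable} and Lemma \ref{bella}. The base cases are quick. If $r=0$ then $\projdim_A M<\infty$, so since $M$ is maximal \CM \ over the \CM \ ring $A$ it is free; hence $M\cong 0$ in $\CMS(A)$, $f(M)\cong 0$ in $\CMS(B)$, and $\cx_B f(M)=0$. If $r=1$ then $M$ is $2$-periodic in $\CMS(A)$ (the well-known fact recalled in the proof of Theorem \ref{periodic}); since $f$ is triangulated it follows that $\Om^2 f(M)\cong f(M)$ in $\CMS(B)$, and comparing the minimal free resolutions of a representative of $f(M)$ and of its second syzygy gives $\beta^B_{n+2}(f(M))=\beta^B_n(f(M))$ for all $n\geq 1$, so $f(M)$ has bounded Betti numbers and $\cx_B f(M)\leq 1$.

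For the inductive step I would fix $r\geq 2$, assume the statement for all smaller complexities, and write $M_j=\Om^j_A(M)$. Theorem \ref{slice} (which applies since $A$ is an abstract complete intersection) produces integers $s\geq 1$, $n_0\geq 0$ and a short exact sequence of maximal \CM \ $A$-modules $0\to K\to M_{n_0+2s}\to M_{n_0}\to 0$ with $\cx_A K=r-1$. Regarding this sequence as an element of an appropriate $\Ext^1_A$-group and invoking \ref{ext-triangles:}, one obtains an exact triangle
\[
\Om^{n_0+1}M\to K\to \Om^{n_0+2s}M\to \Om^{n_0}M
\]
in $\CMS(A)$ (here $M_j$, viewed in $\CMS(A)$, is the $j$-fold syzygy $\Om^j$ of $M$). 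Applying the triangulated functor $f$, which commutes with the shift $\Om^{-1}$ up to natural isomorphism, yields an exact triangle
\[
\Om^{n_0+1}f(M)\to f(K)\to \Om^{n_0+2s}f(M)\to \Om^{n_0}f(M)
\]
in $\CMS(B)$.

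Next I would feed this triangle into Lemma \ref{betti-stable}(3). Writing $\beta(j)=\beta^B_j(f(M))$ and $\gamma(j)=\beta^B_j(f(K))$ (these are well defined for $j\geq 1$ by Lemma \ref{betti-stable}) and using that $\beta^B_i(\Om^m N)=\beta^B_{m+i}(N)$ for $i\geq 1$ and $m\geq 0$ (compare the minimal free resolution of a representative $N$ with that of its $m$-th syzygy), the long exact $\Tor$-sequence of the triangle gives, for every $i\geq 2$,
\[
\beta(n_0+2s+i)\ \leq\ \gamma(i)+\beta(n_0+i).
\]
Putting $g(n)=\gamma(n-n_0)$ for $n>n_0$ and $g(n)=0$ otherwise, so that $\cx g=\cx\gamma$, and substituting $n=n_0+i$ (using $g\geq 0$), this becomes $\beta(n+2s)\leq\beta(n)+g(n)+g(n-1)$ for all $n\geq n_0+2$. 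The inductive hypothesis applied to the module $K$ gives $\cx_B f(K)\leq r-1$, i.e.\ $\cx g=\cx\gamma\leq r-1$, so Lemma \ref{bella} yields $\cx\beta\leq r$; that is, $\cx_B f(M)\leq r$, which completes the induction.

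The step I expect to be the real obstacle is not a hard estimate but the bookkeeping around the triangle: one must rotate it so that Lemma \ref{betti-stable}(3) bounds a Betti number of $f(M)$ at a \emph{larger} index in terms of one at a \emph{smaller} index plus a contribution ($\gamma$) of \emph{strictly smaller} complexity — the opposite rotation produces only a tautology — and one must be careful, in the presence of possible free direct summands of syzygy modules, to justify that syzygy and cosyzygy shifts in $\CMS(B)$ merely translate Betti numbers. Once this is arranged, the conclusion is exactly the elementary polynomial-growth estimate packaged in Lemma \ref{bella} (which itself rests on Proposition \ref{poly-type}).
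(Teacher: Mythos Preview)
Your proposal is correct and follows essentially the same approach as the paper: induction on $r$ with the same base cases, Theorem \ref{slice} to produce the short exact sequence $0\to K\to M_{n_0+2s}\to M_{n_0}\to 0$, transport via $f$, Lemma \ref{betti-stable}(3) for the Betti-number inequality, and Lemma \ref{bella} to conclude. Your rotation of the triangle before applying Lemma \ref{betti-stable}(3) is a cosmetic variant (and in fact yields the slightly sharper one--term bound $\beta(n+2s)\leq\beta(n)+g(n)$, which you then pad with $g(n-1)$ to match the hypothesis of Lemma \ref{bella}); the paper writes the two--term bound directly.
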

\begin{proof}
We note that every $A$ module $N$ has complexity $\leq c$.
We prove the result by induction on $\cx M$. If $\cx M = 0$ then $M$ is a free $A$-module. So $M = 0$ in $\CMS(A)$. Thus $f(M) = 0$ in $\CMS(B)$. So $f(M)$ is a free $B$-module.

Next we assume $\cx_A M = 1$. Then $M$ is periodic. We may assume $M$ has no free summands. Then $\Omega_A^2 (M) \cong M$.
As $f$ is a triangulated functor we get that $\Om_B^2 f(M) \cong f(M)$. So $\cx_B f(M) \leq 1$

Now assume that $\cx_A M  = r \geq 2$ and that our claim is true for MCM $A$-modules with complexity $= r -1$.

 Set $M_n = \Omega^n_A(M)$. By Theorem \ref{slice} there exists $n_0$ and  an exact sequence
\[
0 \rt K \rt M_{n_0 + 2s} \rt M_{n_0} \rt 0
\]
in $\text{mod}(A)$ where $\cx K = r -1$.  Note $K$ is MCM $A$-module. So we have an exact triangle
\[
K \rt M_{n_0 + 2s} \rt M_{n_0} \rt \Om^{-1}_A(K)
\]
So we have an exact triangle in $\CMS(B)$
\[
f(K) \rt f(M)_{n_0 + 2s} \rt f(M)_{n_0} \rt \Om^{-1}_B(f(K)).
\]
By induction hypothesis we get $\cx f(K) \leq r-1$.
Usin \ref{betti-stable}(3) we have for all $n \geq n_0 + 2$
\[
\beta^B_{n + 2s}(f(M)) \leq \beta^B_{n }(f(M)) + \beta^B_{n - n_0}(f(K)) + \beta^B_{n - n_0  - 1}(f(K))
\]
 By Lemma \ref{bella}  it follows that $\cx f(M) \leq r$.
\end{proof}

Recall an additive functor $f \colon \CMS(A) \rt \CMS(B)$ is a weak equivalence if every MCM $B$-module $N$ is a direct summand of $f(M_N)$ for some $M_N$ in $\CMS(A)$ (here $M_N$ depends on $N$).

\begin{corollary}\label{weak-equivalence}
Let $f \colon \CMS(A) \rt \CMS(B)$ be a weak equivalence. If $A$ is an abstract complete intersection of codimension $c$ then $B$ is also an abstract complete intersection of codimension $\leq c$.
\end{corollary}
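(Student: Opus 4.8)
The plan is to derive this directly from Theorem \ref{basic} together with Gulliksen's characterization of complete intersections via the complexity of the residue field. First I would record that, since $A$ is an abstract complete intersection of codimension $c$, every $A$-module — in particular every MCM $A$-module $M$ — satisfies $\cx_A M \leq c$ (see \ref{bella-rangella}). Feeding this into Theorem \ref{basic} applied to $f$ yields $\cx_B f(M) \leq c$ for \emph{every} MCM $A$-module $M$.

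Next I would exploit the weak equivalence hypothesis. Given an arbitrary MCM $B$-module $N$, there is an MCM $A$-module $M_N$ such that $N$ is a direct summand of $f(M_N)$ in $\CMS(B)$. Since for a direct summand one has $\beta_i^B(N) \leq \beta_i^B(f(M_N))$ for all $i$ (as $\Tor_i^B(N,k)$ is a summand of $\Tor_i^B(f(M_N),k)$), it follows that $\cx_B N \leq \cx_B f(M_N) \leq c$. Thus every MCM $B$-module has complexity at most $c$. Applying this to $\Omega_B^{d}(k)$, where $d = \dim B$ and $k$ is the residue field of $B$ (this module is MCM, and is zero precisely when $B$ is regular), and using that the Betti sequence of a syzygy is a shift of the original Betti sequence, we get $\cx_B k = \cx_B \Omega_B^{d}(k) \leq c < \infty$.

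Finally I would invoke Gulliksen's theorem (see \cite{Gull}): a local ring whose residue field has finite complexity is an abstract complete intersection, and in that case the codimension equals the complexity of the residue field. Hence $B$ is an abstract complete intersection of codimension $\cx_B k \leq c$, which is the assertion. (When $B$ is regular this is just the trivial case $c = 0$.)

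The whole argument is essentially bookkeeping once Theorem \ref{basic} is in hand, so I do not anticipate a genuine obstacle. The only points deserving a little care are: that $f$ must be understood as a triangulated functor in order to apply Theorem \ref{basic}; that passing from $k$ to its top syzygy leaves the complexity unchanged; and the appeal to Gulliksen's theorem, which is what converts the a priori bound $\cx_B k \leq c$ into the structural conclusion that $B$ is an abstract complete intersection of codimension $\leq c$.
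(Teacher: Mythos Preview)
Your proposal is correct and follows essentially the same route as the paper: bound $\cx_B N$ for every MCM $B$-module via Theorem \ref{basic} and the direct-summand property, then invoke Gulliksen's characterization of complete intersections. The paper's version is terser---it cites \cite[2.3]{Gull-dev} directly from the statement that every MCM $B$-module has complexity $\leq c$, without explicitly passing through $\Omega_B^d(k)$---but the substance is identical, and your observation that $f$ must be triangulated (not merely additive) for Theorem \ref{basic} to apply is a valid point of care.
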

\begin{proof}
  Let $N$ be an MCM $B$-module. Say $N$ is a direct summand of $f(M)$. As $\cx_A M \leq c$, by \ref{basic} we get that $\cx_B f(M) \leq c$.
  So $\cx_B N \leq c$. By Gulliksen's result, \cite[2.3]{Gull-dev} we have that $B$ is also an abstract complete intersection of codimension $\leq c$.
\end{proof}

Next we give a proof of Theorem \ref{theorem-ci}. We restate it for the convenience of the reader.
\begin{theorem}\label{theorem-ci-body}
Let $A, B$ be Gorenstein local rings. Assume $\CMS(A)$ is triangle equivalent to $\CMS(B)$. If $A$ is an abstract complete intersection of co-dimension $c$ then $B$ is also an abstract complete intersection of co-dimension $c$.
\end{theorem}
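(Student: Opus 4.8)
The plan is to deduce the statement from Corollary \ref{weak-equivalence} by running it in both directions. First I would record that a triangle equivalence $\Psi \colon \CMS(A) \rt \CMS(B)$ is in particular essentially surjective, hence a weak equivalence in the sense recalled just before Corollary \ref{weak-equivalence}: every MCM $B$-module $N$ is isomorphic in $\CMS(B)$ to $\Psi(M)$ for a suitable $M$, and in particular is a direct summand of $\Psi(M)$. Moreover a quasi-inverse $\Phi \colon \CMS(B) \rt \CMS(A)$ carries a canonical triangulated structure making it again a triangle equivalence, and hence also a weak equivalence.

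Applying Corollary \ref{weak-equivalence} to $\Psi$, and using that $A$ is an abstract complete intersection of codimension $c$, I obtain that $B$ is an abstract complete intersection of some codimension $c' \leq c$; in particular $B$ is Gorenstein (automatic for an abstract complete intersection), so Corollary \ref{weak-equivalence} applies verbatim to the triangle functor $\Phi$. Running it this time from the fact that $B$ is an abstract complete intersection of codimension $c'$, I conclude that $A$ is an abstract complete intersection of codimension $\leq c'$. Since the codimension of an abstract complete intersection is an intrinsic invariant of the completion -- for instance $c = \operatorname{embdim}\wh{A} - \dim \wh{A}$ for a minimal Cohen presentation, equivalently $c = \sup\{\,\cx_A M \mid M \ \text{MCM}\,\}$ by Gulliksen's bound together with the existence of a module of maximal complexity such as $\Om^{\dim A}(k)$ -- the codimension of $A$ is exactly $c$. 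Therefore $c \leq c'$, and combined with $c' \leq c$ this gives $c' = c$, which is the assertion.

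The argument is essentially formal once Corollary \ref{weak-equivalence} is in hand, so I do not expect a genuine obstacle. The one point worth spelling out carefully is that ``codimension'' must be treated as the intrinsic number $\operatorname{embdim}\wh{A} - \dim\wh{A}$ (equivalently the maximal complexity of an $A$-module), rather than the length of some ad hoc $Q$-regular sequence in an arbitrary presentation, so that the inequalities produced in the two directions genuinely refer to the same invariant and may be compared. All the substantive work -- that a triangle functor cannot increase complexity (Theorem \ref{basic}), and Gulliksen's converse characterizing abstract complete intersections of bounded codimension via bounded complexity -- has already been carried out, so the present proof is just the two-line bookkeeping step of combining the two bounds.
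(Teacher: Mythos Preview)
Your proposal is correct and follows essentially the same approach as the paper: apply Corollary~\ref{weak-equivalence} to the equivalence $\Psi$ to get $\operatorname{codim} B \leq c$, then to the quasi-inverse $\Phi$ to get $c \leq \operatorname{codim} B$. The paper's proof is the same two-line bookkeeping step you describe, only terser; your extra care in noting that codimension is an intrinsic invariant is a reasonable clarification but not strictly needed.
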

\begin{proof}
  Let $f \colon \CMS(A) \rt \CMS(B)$ and $g \colon \CMS(B) \rt \CMS(A)$ be the triangulated functors which are inverses of each other.
  Let $A$ be an abstract complete intersection of codimension $c$. Using $f$ and Corollary \ref{weak-equivalence} we get that $B$ is an abstract complete intersection with codimension $\leq c$. Using $g$ it follows that $c \leq $ codimension of $B$. So $A,B$ have the same codimension.
\end{proof}
We also show the following.
\begin{theorem}\label{theorem-ci-body-2}
Let $A, B$ be abstract complete intersection local rings. Assume $\CMS(A)_{\leq r}$ is triangle equivalent to $\CMS(B)_{\leq s}$ then $r = s$.
\end{theorem}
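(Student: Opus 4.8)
The plan is to deduce this from the machinery already built for Theorem~\ref{theorem-ci}, the key point being that the complexity bound of Theorem~\ref{basic} survives unchanged when the source and target are the thick subcategories $\CMS_{\leq r}(A)$ and $\CMS_{\leq s}(B)$ rather than the whole stable categories, and that $\CMS_{\leq r}(A)$ ``sees'' the integer $r$ through the largest complexity it contains.

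\textbf{Step 1 (two intrinsic facts about $\CMS_{\leq r}(A)$).} First I would record that the thick subcategory generated by the MCM modules of complexity $\leq r$ is precisely $\{M\in\CMS(A)\mid \cx_A M\leq r\}$: complexity is unchanged by $\Omega^{\pm 1}$, satisfies $\cx(M\oplus N)=\max\{\cx M,\cx N\}$, and is subadditive along exact triangles (apply Lemma~\ref{betti-stable}(3), its rotations, and \ref{max-cx} to the resulting $\Tor$ long exact sequences). Second, $\CMS_{\leq r}(A)$ contains an MCM module of complexity \emph{exactly} $r$: by the remark before Theorem~\ref{vb1}, $\Omega^{\dim A}(k)$ is MCM of complexity $\codim A$, and iterating Theorem~\ref{slice} produces MCM modules (kernels of surjections of MCM modules onto MCM modules over a Gorenstein ring, hence MCM) of complexities $\codim A,\ \codim A-1,\ \dots,\ 1$; since $\CMS_{\leq r}(A)$ is defined only for $r\leq\codim A$, one of these has complexity $r$.

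\textbf{Step 2 (complexity bound for functors between truncations).} Next I would prove: if $A,B$ are abstract complete intersections and $f\colon \CMS_{\leq r}(A)\rt\CMS_{\leq s}(B)$ is a triangulated functor, then $\cx_B f(M)\leq \cx_A M$ for every $M\in\CMS_{\leq r}(A)$. The argument is verbatim the induction on $t=\cx_A M$ used for Theorem~\ref{basic}: the cases $t=0,1$ are identical, and for $t\geq 2$ Theorem~\ref{slice} gives an exact sequence $0\rt K\rt M_{n_0+2s'}\rt M_{n_0}\rt 0$ with $K$ MCM and $\cx_A K=t-1$. The only new thing to check is that the associated triangle $K\rt M_{n_0+2s'}\rt M_{n_0}\rt \Om^{-1}(K)$ lies inside the source category: the syzygies $M_n=\Om^n(M)$ all have complexity $t\leq r$ and $\cx_A K=t-1<r$, so by Step~1 every term (hence the triangle) is in $\CMS_{\leq r}(A)$ and $f$ can be applied. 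After applying $f$, \ref{betti-stable}(3) and Lemma~\ref{bella} finish exactly as in Theorem~\ref{basic}. (Note $s'$ need not be $1$, but Lemma~\ref{bella} is stated for arbitrary period, so no hypothesis on the residue field is needed.)

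\textbf{Step 3 (conclusion).} Let $\phi\colon\CMS_{\leq r}(A)\rt\CMS_{\leq s}(B)$ be a triangle equivalence with inverse $\psi$. For $M\in\CMS_{\leq r}(A)$, applying Step~2 to $\psi$ and then to $\phi$ gives
\[
\cx_A M=\cx_A\psi(\phi(M))\leq\cx_B\phi(M)\leq\cx_A M,
\]
so $\cx_B\phi(M)=\cx_A M$. Choosing $M$ with $\cx_A M=r$ (Step~1), we get $\cx_B\phi(M)=r$; but $\phi(M)\in\CMS_{\leq s}(B)$ forces $\cx_B\phi(M)\leq s$ by Step~1 for $B$, so $r\leq s$. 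The symmetric argument with $\psi$ and a $B$-module of complexity $s$ gives $s\leq r$, hence $r=s$. I do not expect a genuine obstacle here: the single nonroutine verification is the parenthetical claim in Step~2 that the triangle from Theorem~\ref{slice} stays in $\CMS_{\leq r}(A)$, and that is immediate because the auxiliary kernel has complexity strictly smaller than $M$; everything else is a reapplication of the proof of Theorem~\ref{theorem-ci}.
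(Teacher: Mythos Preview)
Your proof is correct and follows essentially the same approach as the paper: both use Theorem~\ref{basic} (that a triangulated functor does not raise complexity) together with the existence of MCM modules of maximal complexity to force $r\leq s$ and $s\leq r$. The paper's own proof is terser---it simply cites Theorem~\ref{basic} without remarking that its inductive argument must be seen to stay inside the truncation $\CMS_{\leq r}(A)$---whereas you spell out this verification explicitly in Step~2 and also make the existence of a module of complexity exactly $r$ explicit in Step~1; these are precisely the details the paper leaves implicit.
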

\begin{proof}
Let $f \colon \CMS(A)_{\leq r} \rt \CMS(B)_{\leq s}$ be an equivalence. Then note if $X$ is in $\CMS(A)_{\leq r}$
 then by \ref{basic} we get that the
complexity of $f(X) \leq r$. So $r < s $ is not possible. Similarly by considering $f^{-1}$ we get that $s < r$ is not possible. So $r = s$.
\end{proof}

\section{Proof of Theorem \ref{cm-r}}
In this section $(Q,\n)$ is a regular local ring with algebraically closed residue field $k$.  Let $A = Q/(f_1,\ldots, f_c)$ where  $f_1,\ldots, f_c \in \n^2$ is a regular sequence. We denote the maximal ideal of $A$ by $\m$.

\s
Let $U, V$ be two  $A$-modules. Define
\[
\cx_A(U, V) = \inf\left \lbrace b \in \mathbb{N}  \left\vert \right.   \limsup_{n \to \infty} \frac{\mu(\Ext^{n}_{A}(U,V))}{n^{b-1}}  < \infty \right \rbrace.
\]

\s  Let $\Ext^*(U,V) = \bigoplus_{n\geq 0} \Ext^{n}_A(U,V)$ be the total ext module of $U$ and $V$. We consider it as a
 module over the ring of cohomological operators $A[t_1,\ldots, t_c]$.
Since $\projdim_Q U$ is finite $\Ext^*(U,V)$ is a finitely generated
$A[t_1,\ldots, t_c]$-module.

\s Let $\eC(U,V) = \Ext^*(U,V)\otimes_A k$. Clearly $\eC(U,V)$ is a finitely generated
  $T = k[t_1,\ldots, t_c]$-module. (Here degree of $t_i$ is 2 for each $i =1,\ldots, c$).
Set
\[
\fA(U,V) = \ann_T \eC(U,V).
\]
Notice that $\fA(U,V)$ is a homogeneous ideal.
\s We consider the affine space $\mathbb{A}^c(k)$.
Let
$$\cV(U,V) =  \cV\left(\fA(U,V)\right) \subseteq \mathbb{A}^c(k).$$
Since $\fA(U, V)$ is graded ideal we get that $\cV(U,V)$ is a cone.
\s\label{lucho-B}  By a result due to Avramov and Buchweitz \cite[2.4]{avr-b} we get that
\[
\dim \cV(U,V) = \cx_A(M,N).
\]
Set $\cV(U) = \cV(U,k)$. Then $\cV(U) = \cV(U,U)$ and
\[
\cV(U,V) = \cV(U)\cap \cV(V).
\]
\s\label{geometry} It is easier to do geometry in projective space. Let $R = k[z_0,\ldots,z_{c-1}]$ be standard graded. If $f(t_1,\ldots, t_c) \in T$ then let $f_R(z_0,\ldots, z_{c-1})$ be the polynomial in $R$ obtained by replacing $t_i$ by $z_{i-1}
$ for  $i = 1,\cdots, c$. Set
\[
\fA_R(U,V) = \{ f_R(z_0,\ldots, z_{c-1}) \mid f \in \fA(U,V) \}.
\]
Then $\fA_R(U,V)$ is a homogeneous ideal in $R$. Set
$$\cV^*(U,V) =  \cV\left(\fA_R(U,V)\right) \subseteq \mathbb{P}^{c-1}(k).$$
Set $\dim \emptyset = -1$. Clearly
\[
\cx_A(M,N) = \dim \cV^*(U,V) + 1.
\]
We also have $\cV^*(U,V) = \cV^*(U) \cap \cV^*(V)$.
We now give a proof of Theorem \ref{cm-r}. We restate it for the convenience of the reader.
\begin{theorem}\label{cm-r-bella}
Let $(A,\m)$, $(B,\n)$ be geometric complete intersections of \\ co-dimension $m,n \geq 2$ respectively and with algebraically closed residue fields. Let  \\
$r \geq \max\{ m/2, n/2 \}$ and also $r \leq \min\{ m, n \}$. If $\CMS_{\leq r}(A)$ is triangle equivalent to $\CMS_{\leq r}(B)$ then necessarily
 $m = n$.
\end{theorem}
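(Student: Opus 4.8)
The plan is to produce a triangulated invariant of $\CMS_{\leq r}(A)$ whose value is the codimension $m$; applied to $\CMS_{\leq r}(B)$ the same recipe yields $n$, and a triangle equivalence then forces $m = n$.

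First I would record that two pieces of data are preserved by any triangle equivalence $\psi \colon \CMS_{\leq r}(A) \to \CMS_{\leq r}(B)$. The first is the complexity of an object: the proof of Theorem \ref{basic} applies verbatim to a triangulated functor between the subcategories $\CMS_{\leq r}$, because the exact sequences produced by Theorem \ref{slice} in that argument involve only modules of complexity $\leq r$, so the induction never leaves $\CMS_{\leq r}$; hence $\cx_B \psi(X) \leq \cx_A X$, and applying this to $\psi^{-1}$ as well gives $\cx_B \psi(X) = \cx_A X$. The second is the condition, on a pair $X, Y$ of objects, that $\sHom_A(X, \Om^{-n} Y) = 0$ for all $n \gg 0$; this is expressed purely through the translation functor and the Hom-sets, hence preserved. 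Since $\sHom_A(X, \Om^{-n}Y) \cong \Ext^n_A(X, Y)$ for $n \geq 1$, this says $\Ext^{\gg 0}_A(X, Y) = 0$, i.e. $\cx_A(X,Y) = 0$; by the Avramov--Buchweitz identities recalled in \ref{lucho-B}, namely $\cx_A(X,Y) = \dim \cV^*(X,Y) + 1$ and $\cV^*(X,Y) = \cV^*(X) \cap \cV^*(Y)$, this is equivalent to $\cV^*(X) \cap \cV^*(Y) = \emptyset$ inside $\mathbb{P}^{c-1}(k)$, where $c = m$.

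Next I would set
\[
\mathbb{M}(A) := \max\Bigl\{\, \cx_A X + \cx_A Y \;\Bigm|\; X, Y \in \CMS_{\leq r}(A), \ \sHom_A(X, \Om^{-n}Y) = 0 \text{ for } n \gg 0 \,\Bigr\},
\]
a finite number ($\leq 2r$) which, by the two preservation facts above, is a triangle invariant. The claim is $\mathbb{M}(A) = m$. The inequality $\mathbb{M}(A) \leq m$ is projective geometry: if $\cV^*(X)$ and $\cV^*(Y)$ are disjoint, then they are disjoint closed subsets of $\mathbb{P}^{m-1}(k)$ of dimensions $\cx_A X - 1$ and $\cx_A Y - 1$, so the classical intersection bound forces $(\cx_A X - 1) + (\cx_A Y - 1) \leq m - 2$, that is $\cx_A X + \cx_A Y \leq m$. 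For $\mathbb{M}(A) \geq m$ I would use the hypothesis $m \leq 2r$ to choose $a, b$ with $1 \leq a, b \leq r$ and $a + b = m$ (possible since $2 \leq m \leq 2r$), take complementary coordinate linear subspaces $L_1, L_2 \subseteq \mathbb{P}^{m-1}(k)$ of dimensions $a-1$ and $b-1$ (so $L_1 \cap L_2 = \emptyset$), and realize $L_1 = \cV^*(X)$, $L_2 = \cV^*(Y)$ for MCM $A$-modules $X, Y$ of complexities $a$ and $b$; then $X, Y \in \CMS_{\leq r}(A)$ and $\cx_A X + \cx_A Y = m$. Running the same computation over $B$ gives $\mathbb{M}(B) = n$, and since $\mathbb{M}(A) = \mathbb{M}(B)$ we conclude $m = n$.

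The hypothesis $r \geq \max\{m/2, n/2\}$ enters precisely in the lower bound for $\mathbb{M}$: if $m > 2r$ one could only force $\cx_A X + \cx_A Y \leq 2r < m$, and in fact $\mathbb{M}(A)$ would saturate at $2r$ and forget $m$, so the bound $m \leq 2r$ is exactly what lets $\mathbb{M}$ recover the codimension. The step I expect to be the real obstacle — and the point at which the techniques of Avramov--Buchweitz \cite{avr-b} are used — is the realization of a prescribed coordinate-linear subspace of $\mathbb{P}^{c-1}(k)$ as the support variety of an MCM module over a geometric complete intersection. For this one realizes the desired cone by starting from a high syzygy $N$ of the residue field over a sub-complete-intersection $Q/(f_i : i \in S)$, noting that $N$ has full support and that its $A$-Betti numbers are unchanged after reduction modulo the remaining $f_i$'s (which therefore form an $N$-regular sequence), and then identifying the effect of this reduction on $\cV^*$; finally one passes to a high $A$-syzygy to make the module maximal \CM. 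Everything else is the elementary intersection theory of linear subspaces in projective space.
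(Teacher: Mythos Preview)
Your proposal is correct and follows essentially the same route as the paper: both exploit that complexity and the condition $\Ext^{\gg 0}_A(X,Y)=0$ are preserved by a triangle equivalence, realize complementary coordinate linear subspaces of $\mathbb{P}^{m-1}$ as support varieties of MCM modules of complexity $\leq r$ (the paper simply cites \cite[2.3]{bergh} for this realization, so you overestimate its difficulty), and then invoke the projective intersection bound \cite[I.7.2]{Hart}. Your packaging via the numerical invariant $\mathbb{M}(A)=m$ is a clean symmetric reformulation of the paper's direct contradiction argument (assume $m>n$, build the pair $M,N$ over $A$ with $\cx_A M=r$, $\cx_A N=m-r$, transfer to $B$, and contradict the intersection theorem in $\mathbb{P}^{n-1}$), but the substance is identical.
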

\begin{proof}
  Suppose if possible $m \neq n$. Without loss of generality  we may assume $m \geq n + 1$. Let $\psi \colon \CMS_{\leq r}(A) \rt \CMS_{\leq r}(B)$ be an equivalence.
  By using Theorem \ref{basic} to $\psi$ and $\psi^{-1}$ we get that if $\cx_A E = t$ then $\cx_B \psi(E) = t$.
   In $\mathbb{P}^{m-1}(k)$ consider the following two varieties:
  \begin{enumerate}
    \item $X$ defined by $z_0 = z_1 = \cdots = z_{m-r-1} = 0$.
    \item $Y$ defined by $z_{m-r} = z_{m-r +1} = \cdots = z_{m-1} = 0$.
  \end{enumerate}
  Note $\dim X = r - 1$, $\dim Y = m - r - 1$ and $X \cap Y = \emptyset$. By \cite[2.3]{bergh} there exists MCM $A$-modules $M, N$ with $\cV^*(M) = X$ and $\cV^*(N) = Y$.
  Note $\cx_A M = r $ and $\cx_A N = m - r$. As $r \geq m/2$ we get $m - r \leq r$. So $M, N \in \CMS_{\leq r}(A)$. As $\cV^*(M, N) = \emptyset$ we get $\Ext^i_A(M, N) = 0$ for all $i \gg 0$. Furthermore $\sHom_A(M, \Omega^{-i}_A(N)) \cong \Ext^i_A(M, N) = 0 $ for all $i \gg 0$.
  It follows that $\sHom_B(\psi(M), \Omega^{-i}_B(\psi(N)))  = 0$ for all $i  \gg 0$. Thus  $\Ext^i_B(\psi(M),\psi(N)) = 0$ for all $i \gg 0$.
  We now consider support varieties of $B$-modules in $\mathbb{P}^{n-1}$. We have $\cV^*(\psi(M), \psi(N)) = \emptyset$. But
  $\dim \cV^*(\psi(M)) = \cx_A(\psi(M)) - 1 = r -1$. Similarly $\dim \cV^*(\psi(N)) = m -r -1$.  The sum of these two dimensions is $m - 2 \geq n-1$. This implies that
  $\cV^*(\psi(M)) \cap \cV^*(\psi(N)) \neq \emptyset$ cf. \cite[I.7.2]{Hart}.  So we get $\cV^*(\psi(M),\psi(N)) \neq \cV^*(\psi(M)) \cap \cV^*(\psi(N))$ which is a contradiction. So $m = n$.
\end{proof}
\section{ AR-sequences and AR-triangles }
In this section we first assume $(A,\m)$ is a Henselian \CM \ local ring of dimension $d$  with a canonical module $\omega_A$. Later we will restrict to the case when $A$ is a Henselian Gorenstein local ring. We first discuss the notion  of  AR-sequences. A good reference for these concepts is \cite{Y}.
 Finally we
 discuss existence of  AR-triangles in $\CMS(A)$.

\s\label{ar-seq} Recall an exact  sequence $s \colon 0 \rt N \rt E \xrightarrow{p} M \rt 0$ of MCM $A$-modules is an AR-sequence (Auslander-Reiten sequence) if
\begin{enumerate}
 \item $s$ is not split.
 \item $M, N$ are indecomposable maximal \CM \ $A$-modules.
 \item If $L$ is a maximal \CM \ $A$-module and if $q \colon L \rt M$ is a \emph{not} a split epimorphism then there exists
 $f \colon L \rt E$ such that $q = p\circ f$.
\end{enumerate}
We call $s$ the AR-sequence ending at $M$ (equivalently starting at $N$).
If there is a AR-sequence ending at $M$ it is unique up-to isomorphism of short-exact sequences, \cite[2.7]{Y}.

\s Let $M$ be an indecomposable MCM $A$-module. By \cite[3.4]{Y}, there is an AR-sequence ending at $M$ if and only if  $M$ is free on $\Spec^0(A)$.

Also note that $N \cong \Hom_A(\Om^d(\Tr(M), \omega_A)$;
see \cite[3.11]{Y}. We set $\tau(M) = N$ and call it the Auslander-Reiten translate of $M$.

\noindent\emph{From now on assume $(A,\m)$ is a Henselian Gorenstein local ring of dimension $d$.}

\s Let $M$ be an indecomposable MCM $A$-module free on $\Spec^0(A)$. Then the AR-translate $\tau(M) = \Om^{2-d}(M)$.

\s \label{defn-AR-triangle} The notion of AR-triangles in a triangulated category was introduced by Happel \cite[Chapter 1]{Happel}. We will discuss it  only for the category $\CMS(A)$. Note as $A$ is Henselian we get that $\CMS(A)$ is a Krull-Schmidt category.
First note that the notion of split monomorphism and split epimorphism makes sense in any triangulated category. They are called section and retraction respectively in \cite{Happel}.
A triangle $ N \xrightarrow{f} E \xrightarrow{g} M \xrightarrow{h} \Om^{-1}(N)$  in $\CMS(A)$ is called an AR-triangle (ending at $M$) if

   (AR1) \  \ $M, N$ are indecomposable.

   (AR2) \ \  $h \neq 0$.

   (AR3) \  \ If $D$ is indecomposable then for every non-isomorphism $t \colon D \rt M$ we have $h\circ t = 0$.

By considering the functor $\Hom_\T(D, -)$ it is easy to see that (AR3) is equivalent to

  (AR3l)  \  \ If $D$ is indecomposable then for every non-isomorphism $t \colon D \rt M$    there is a lift $q \colon D \rt E$ with $g\circ q = t$.

It is also easy to see that (AR3l) is equivalent to

(AR3g)  \  \ If $W$ is not-necessarily indecomposable and  $s \colon W \rt M$ is not a retraction then    there is a lift $q \colon W \rt E$ with $g\circ q = s$.

 \begin{remark}\label{HuniqueAR}
 It is shown in \cite[1.4.3]{Happel} that AR triangles are unique up-to isomorphism of triangles. However the proof \emph{only works} in Hom-finite categories. In our case the proof works only if $A$ is an isolated singularity. However  to prove some of our results we have to prove uniqueness of AR-triangles in $\CMS(A)$ in general.
 \end{remark}
 We first prove:
 \begin{lemma}\label{end-tri}
 Let $s \colon  N \xrightarrow{f} E \xrightarrow{g} M \xrightarrow{h} \Om^{-1}(N)$  be a triangle in $\CMS(A)$ with $N$ indecomposable and $h \neq 0$. Let
 $\theta \colon s \rt s$ be a morphism of triangles such that $\theta_M = 1_M$.  Then $\theta$ is an isomorphism of triangles.
 \end{lemma}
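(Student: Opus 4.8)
The plan is to reduce everything to showing that the single component $\theta_N \colon N \to N$ is an isomorphism in $\CMS(A)$. Once that is known, the triangulated five lemma applied to $\theta$ (whose components $\theta_N$ and $\theta_M = 1_M$ are then both isomorphisms) forces $\theta_E$ to be an isomorphism as well, and a morphism of triangles all of whose components are isomorphisms is an isomorphism of triangles. So the whole content of the lemma is the invertibility of $\theta_N$.

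To prove $\theta_N$ is invertible I would use that $A$ is Henselian, so $\CMS(A)$ is a Krull-Schmidt category and hence the endomorphism ring $\sHom_A(N,N)$ of the indecomposable object $N$ is local. (Note also $N \neq 0$ in $\CMS(A)$, since otherwise $\Om^{-1}(N) = 0$ and $h$ would be zero, contrary to hypothesis.) In a local ring, for every element $\lambda$ either $\lambda$ or $1 - \lambda$ is a unit. Suppose, for contradiction, that $\theta_N$ is not an isomorphism; then $1_N - \theta_N$ is an isomorphism in $\CMS(A)$. Commutativity of the rightmost square of the morphism of triangles $\theta$, together with $\theta_M = 1_M$, gives $\Om^{-1}(\theta_N)\circ h = h$. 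Since the translation functor $\Om^{-1}$ is additive, $\Om^{-1}(1_N - \theta_N) = 1_{\Om^{-1}(N)} - \Om^{-1}(\theta_N)$, so
\[
\Om^{-1}(1_N - \theta_N)\circ h = h - \Om^{-1}(\theta_N)\circ h = 0.
\]
As $\Om^{-1}(1_N - \theta_N)$ is an isomorphism, this forces $h = 0$, contradicting the hypothesis $h \neq 0$. Hence $\theta_N$ is an isomorphism.

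Finally I would invoke the five lemma for triangulated categories (see \cite{N}): applying the homological functors $\sHom_A(W, -)$, $W \in \CMS(A)$, to the morphism $\theta$ of triangles and chasing the resulting long exact sequences shows that since $\theta_N$ and $\theta_M$ are isomorphisms, $\theta_E$ is one too; thus $\theta$ is an isomorphism of triangles. The one genuinely load-bearing input — the point I expect to be the main obstacle — is that $\sHom_A(N,N)$ is local, which is exactly where the Henselian hypothesis (via the Krull-Schmidt property of $\CMS(A)$) enters; everything else is a routine diagram chase, and crucially no Hom-finiteness is used, which is why this argument works in the generality needed here (cf. Remark \ref{HuniqueAR}).
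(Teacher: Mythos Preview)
Your proof is correct and follows essentially the same route as the paper: both argue that the endomorphism ring of the indecomposable object is local (the paper works with $\Om^{-1}(N)$, you with $N$, which is equivalent since $\Om^{-1}$ is an autoequivalence), deduce that if the relevant endomorphism were a non-unit then $1 - \theta$ would be invertible, and combine this with the commutativity relation $\Om^{-1}(\theta_N)\circ h = h$ to force $h = 0$; the passage to $\theta_E$ via the triangulated five lemma is identical.
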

 \begin{proof}
 We note that $\Om^{-1}(N)$ is indecomposable. If $u = \theta_{\Om^{-1}(N)}$ is not an isomorphism then $u$ is in the Jacobson radical of $ \R = \sHom_A(\Om^{-1}(N), \Om^{-1}(N))$.   As $s$ is a morphism of triangles we get $u \circ h = h$. Therefore $(1_\R- u)\circ h = 0$. Note $1_\R - u$ is invertible. This forces $h = 0$; a contradiction. So $u$ is an isomorphism. It follows that $\theta_N$ is an isomorphism. As $\theta_M, \theta_N$ are isomorphisms it is well known that $\theta_E$ is also an isomorphism. So $\theta$ is an isomorphism of triangles.
 \end{proof}
 We now prove uniqueness of AR-triangles in $\CMS(A)$.
 \begin{proposition}
\label{unique-AR}
In $\CMS(A)$ consider the following two AR-triangles ending at $M$.
\begin{itemize}
  \item $s \colon N \xrightarrow{f} E \xrightarrow{g} M \xrightarrow{h} \Om^{-1}(N)$
  \item $s' \colon  N' \xrightarrow{f'} E' \xrightarrow{g'} M \xrightarrow{h'} \Om^{-1}(N')$
\end{itemize}
Then $s \cong s'$.
 \end{proposition}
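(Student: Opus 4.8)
The plan is to adapt Happel's proof of uniqueness of AR-triangles \cite[1.4.3]{Happel}, replacing the Hom-finiteness hypothesis used there by Lemma \ref{end-tri}. I would begin with the observation that neither $g$ nor $g'$ is a retraction: if $g$ were a split epimorphism then $s$ would be a split triangle, whence $h = 0$, contradicting (AR2); the same applies to $g'$. Consequently, since $g \colon E \to M$ is not a retraction, property (AR3g) of the AR-triangle $s'$ provides a lift $\phi \colon E \to E'$ with $g' \circ \phi = g$.

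Next I would promote $\phi$ to a morphism of triangles. Rotating $s$ and $s'$ once so that $g$ and $g'$ appear in the first slot, the pair $(\phi, 1_M)$ becomes a commuting square, and the completion axiom (TR3) extends it to a morphism of the rotated triangles; because $\Om^{-1}$ is an autoequivalence of $\CMS(A)$ the third component has the form $\Om^{-1}(w)$ for a unique $w\colon N \to N'$, and rotating back gives a morphism of triangles $\theta\colon s \to s'$ with $\theta_E = \phi$, $\theta_M = 1_M$, $\theta_N = w$. By the symmetric argument --- $g'\colon E' \to M$ is not a retraction, so (AR3g) for $s$ applies --- I would obtain a morphism of triangles $\theta'\colon s' \to s$ with $\theta'_M = 1_M$. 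Then $\theta'\circ\theta\colon s \to s$ has $M$-component $1_M$, so Lemma \ref{end-tri} (valid since $N$ is indecomposable and $h\neq 0$ by (AR1), (AR2)) shows $\theta'\circ\theta$ is an isomorphism of triangles; likewise Lemma \ref{end-tri} applied to $s'$ shows $\theta\circ\theta'$ is an isomorphism of triangles. Hence in each vertex-component $\theta$ has a left and a right inverse and is therefore an isomorphism, so $\theta$ is the required isomorphism $s \cong s'$.

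The one step I expect to need genuine care is the passage from the lift $\phi$ to a morphism of triangles with $M$-component exactly $1_M$: the completion axiom only produces some morphism of the rotated triangles, and one must use the rotation and the invertibility of $\Om^{-1}$ to ensure that the $M$-component can be taken to be the identity and not merely an automorphism of $M$. Apart from that bookkeeping the argument is formal, the substantive content having already been absorbed into Lemma \ref{end-tri}.
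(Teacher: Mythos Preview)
Your proposal is correct and follows essentially the same route as the paper: both arguments use (AR3g) to lift $g$ and $g'$ through each other, complete to morphisms of triangles $\theta, \theta'$ with identity $M$-component, and then invoke Lemma \ref{end-tri} on the composites $\theta'\circ\theta$ and $\theta\circ\theta'$ to conclude. Your final worry is harmless: since the commuting square $(\phi,1_M)$ already has $1_M$ on the $M$-slot, TR3 on the rotated triangles produces a morphism whose $M$-component is literally $1_M$, and full faithfulness of $\Om^{-1}$ lets you write the third component as $\Om^{-1}(w)$ and rotate back.
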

 \begin{proof}
   As $h, h'$ are non-zero we get that $g, g'$ are not retractions; see \cite[1.4]{Happel}. By (AR3g) there exists lifts $q \colon E' \rt E$ and $q' \colon E \rt E'$ lifting $g'$ and $g$ respectively. So we have morphism of triangles $\alpha \colon s \rt s'$ and $\beta \colon s' \rt s$ with
   $\alpha_M = \beta_M = 1_M$. Consider $\beta\circ \alpha \colon s \rt s$. As $N$ is indecomposable and $h \neq 0$ we get by Lemma \ref{end-tri} that $\beta \circ \alpha$ is an isomorphism. Similarly $\alpha \circ \beta$ is an isomorphism. So $s \cong s'$.
 \end{proof}

\begin{definition}
  Let $s \colon N \xrightarrow{f} E \xrightarrow{g} M \xrightarrow{h} \Om^{-1}(N)$ be an AR-triangle ending at $M$. Then by Proposition \ref{unique-AR}, $N$ is  determined
  by $M$ (upto a not-necessarily unique  isomorphism). Set $\tau^t(M) = N$.
\end{definition}
 Next we consider the question of existence of AR-triangles ending at an indecomposable MCM $A$-module.
\begin{proposition}
  \label{exist-AR-triangle}Let $M$ be a indecomposable non-free  MCM module.  The following conditions  are equivalent:
  \begin{enumerate}[\rm (1)]
    \item There exists an AR-triangle ending at $M$.
    \item $M$ is free on $\Spec^0(A)$.
  \end{enumerate}
  Furthermore $\tau^t(M) = \Om^{-d + 2}(M)$.
\end{proposition}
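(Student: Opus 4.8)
The plan is to derive both implications from Yoshino's theory of AR-sequences in $\CMa(A)$, using the dictionary between short exact sequences of MCM modules and exact triangles in $\CMS(A)$ recorded in \ref{ext-triangles:} (with the isomorphism $\underline{\gamma}$ and the equivalence $\Om^{-1}$). The technical bridge between the module-level notions and the stable ones, which I would isolate first, is this elementary fact: since $A$ is Henselian and $M$ is indecomposable and non-free, $\Hom_A(M,M)$ is a local ring and $P(M,M)$ lies in its maximal ideal (if some composite $M\to F\to M$ with $F$ free were a unit, $M$ would be a free direct summand of $F$). Consequently, for $W$ MCM a module map $s\colon W\to M$ is a split epimorphism in $\CMa(A)$ \emph{if and only if} its image $\underline{s}$ is a retraction in $\CMS(A)$: one direction is trivial, and for the converse a stable section gives $s\circ u=1_M+\varepsilon$ with $\varepsilon\in P(M,M)$, so $1_M+\varepsilon$ is a unit of $\Hom_A(M,M)$ and $s$ splits.

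For $(2)\Rightarrow(1)$: assume $M$ is indecomposable, non-free and free on $\Spec^0(A)$. By \cite[3.4]{Y} there is an AR-sequence $0\to\tau(M)\to E\to M\to 0$, a non-split element of $\Ext^1_A(M,\tau(M))$, and by the formula for the AR-translate recalled above $\tau(M)\cong\Om^{2-d}(M)$. Feeding this extension into \ref{ext-triangles:} (with the excerpt's module taken to be $\Om(M)$, so that $\Om^{-1}\Om(M)\cong M$ as $M$ has no free summand) yields a basic triangle which, after rotation, reads $\tau(M)\to E\to M\xrightarrow{h}\Om^{-1}(\tau(M))$. I would then verify (AR1)–(AR3) of \ref{defn-AR-triangle}: (AR1) is immediate, since $M$ and $\tau(M)$ are indecomposable non-free modules and hence indecomposable objects of $\CMS(A)$; (AR2) holds because $h$ corresponds under $\underline{\gamma}$ and $\Om^{-1}$ to the (nonzero) class of the non-split AR-sequence; and for (AR3) I use the equivalent form (AR3g): given $s\colon W\to M$ in $\CMS(A)$ which is not a retraction, the fact above says a module lift of $s$ is not a split epimorphism, so condition (3) of \ref{ar-seq} supplies a module-level lift through $E\to M$, whose image in $\CMS(A)$ is the desired lift through $g$ (up to the sign occurring in the triangle). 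This produces the AR-triangle and identifies $\tau^t(M)\cong\Om^{2-d}(M)=\Om^{-d+2}(M)$.

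For $(1)\Rightarrow(2)$: given an AR-triangle $N\xrightarrow{f}E\xrightarrow{g}M\xrightarrow{h}\Om^{-1}(N)$, rotate it to $\Om(M)\xrightarrow{w}N\to E\to M$; since $h\neq0$ and $\Om$ is an equivalence, $w$ is a nonzero element of $\sHom_A(\Om(M),N)\cong\Ext^1_A(M,N)$, hence corresponds to a genuine non-split short exact sequence of modules $0\to N\to L\to M\to 0$ whose associated basic triangle is isomorphic to the given one; in particular $L\cong E$ in $\CMS(A)$. I claim this sequence is an AR-sequence ending at $M$: it is non-split (else $h=0$), $M$ and $N$ are indecomposable modules (representatives of indecomposable objects), and condition (3) of \ref{ar-seq} follows by running the translation in reverse — if $t\colon D\to M$ is not a split epimorphism of modules then $\underline{t}$ is not a retraction, (AR3g) gives a lift $D\to L$ in $\CMS(A)$, and the resulting discrepancy (a map $D\to M$ factoring through a free module) is absorbed by lifting that free module through the surjection $L\to M$ via projectivity. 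Then \cite[3.4]{Y} yields that $M$ is free on $\Spec^0(A)$. Finally, uniqueness of AR-triangles (Proposition \ref{unique-AR}) gives $\tau^t(M)\cong N\cong\Om^{-d+2}(M)$ in all cases.

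The step I expect to be the main obstacle is precisely this shuttling between $\CMa(A)$ and $\CMS(A)$: AR-sequences are phrased with honest split epimorphisms and genuine modules, whereas AR-triangles are phrased with retractions and objects defined only up to stabilization, so condition (3) of \ref{ar-seq} and (AR3) of \ref{defn-AR-triangle} must be matched with care, and the correction terms factoring through free modules must be re-absorbed — which is where the Henselian hypothesis (via locality of $\Hom_A(M,M)$) and the projectivity of free modules both enter.
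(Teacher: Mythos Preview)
Your argument for $(2)\Rightarrow(1)$ follows the paper's line exactly. For $(1)\Rightarrow(2)$, however, you take a genuinely different route. You convert the AR-triangle back into an honest short exact sequence via the $\underline{\gamma}$-correspondence, identify its basic triangle with the given one (same first map $w$, hence isomorphic triangles, hence $L\cong E$ compatibly with the maps to $M$), verify the AR-sequence axioms using your bridge fact and the projectivity trick to absorb the free-module discrepancy, and then invoke the ``only if'' direction of \cite[3.4]{Y}. The paper instead argues directly by contradiction, without constructing any AR-sequence and without citing Yoshino for this implication: assuming $M_P$ is not free for some $P\neq\m$, it takes the free cover $0\to L\to F\to M\to 0$ as a nonzero class $\alpha\in\Ext^1_A(M,L)$, picks $r\in\m\setminus P$ with $r^n\alpha\neq 0$ for all $n$, realizes $r^n\alpha$ by pushouts $\pi_n\colon W_n\to M$, shows each $\pi_n$ fails to be a retraction, and uses (AR3g) to produce morphisms of triangles forcing $h\in\m^n\sHom_A(M,\Om^{-1}(N))$ for every $n$; Krull's intersection theorem then gives $h=0$, contradicting (AR2). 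Your approach is more structural --- it makes the AR-sequence/AR-triangle dictionary explicit and reduces everything to Yoshino's module-level theorem --- but it leans on that external result and on careful bookkeeping with representatives (e.g.\ taking $N$ without free summands so that it is genuinely indecomposable as a module). The paper's approach is self-contained within $\CMS(A)$ and showcases a local-algebraic trick, at the price of being less transparent about why the two notions of AR correspond.
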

\begin{proof}
We first prove (2) $\implies$ (1).

Let
$$s \colon \quad   0 \rt \tau(M) \xrightarrow{\alpha} E \xrightarrow{\beta} M \rt 0$$
 be a AR-sequence ending at $M$. Recall $\tau(M)$ is indecomposable. Consider the natural isomorphism
$\underline{\gamma} \colon \sHom_A(\Om(M), \tau(M)) \rt \Ext^1_A(M, \tau(M))$ and let $\underline{\gamma}(f) = s$. As $s \neq 0$ we get $f \neq 0$.
By \ref{ext-triangles:} there is a triangle in $\CMS(A)$
\[
\Om(M)\xrightarrow{f} \tau(M) \xrightarrow{\alpha} E \xrightarrow{-\beta} M.
\]
 Rotating it we obtain a triangle
\[
\widetilde{s} \colon  \quad \quad \tau(M) \xrightarrow{\alpha} E \xrightarrow{-\beta} M \xrightarrow{-\Om^{-1}(f)} \Om^{-1}(\tau(M)).
\]
We claim that $\widetilde{s}$ is an AR-triangle. Clearly (AR1) is satisfied. Also as $f \neq 0$ we get $\Om^{-1}(f) \neq 0$. So (AR2) is satisfied. Also note that (AR3l)
follows from \ref{ar-seq}(3). So $\widetilde{s}$ is an AR-triangle.

Next we prove (1) $\implies$ (2).\\
Let $s \colon N \xrightarrow{f} E \xrightarrow{g} M \xrightarrow{h} \Om^{-1}(N)$  be the AR-triangle ending at $M$.
Suppose if possible $M$ is not free on $\Spec^0(A)$.  Say $M_P$ is not free for some prime ideal $P \neq \m$ in $A$. Let $\alpha \colon 0 \rt L \rt F \rt M \rt 0 $ be a free cover of $M$. Then notice $\alpha_P \neq 0 $ in $\Ext^1_{A_P}(M_P, L_P) \cong \Ext^1_A(M, L)_P$. So there exists $r \in \m \setminus P$ such that $r^n \alpha \neq 0$ for all $n \geq 1$.
Then note that  the extension $r^n\alpha$ is given by the
push-out diagram:
 \[
  \xymatrix
{
\alpha
\colon
 0
 \ar@{->}[r]
  & L
    \ar@{->}[d]^{r^n}
\ar@{->}[r]^{i}
 & F
    \ar@{->}[d]
\ar@{->}[r]^{\pi}
& M
    \ar@{->}[d]^{1_M}
\ar@{->}[r]
 &0
 \\
 r^n \alpha
 \colon
 0
 \ar@{->}[r]
  & L
\ar@{->}[r]^{i_n}
 & W_n
\ar@{->}[r]^{\pi_n}
& M
\ar@{->}[r]
&0
 }
\]
The above commutative diagram induces a map of triangles
\[
  \xymatrix
{
\beta
\colon
\
  &L
    \ar@{->}[d]^{r^n}
\ar@{->}[r]^{i}
 & F
    \ar@{->}[d]
\ar@{->}[r]^{\pi}
& M
    \ar@{->}[d]^{1_M}
\ar@{->}[r]^{w}
 &\Om^{-1}(L)
 \ar@{->}[d]^{r^n}
 \\
 \beta_n
 \colon
 \
  &L
\ar@{->}[r]^{i_n}
 & W_n
\ar@{->}[r]^{\pi_n}
& M
\ar@{->}[r]^{w_n}
&\Om^{-1}(L)
 }
\]
\textit{Claim:} $\pi_n$ is not a retraction for all $n \geq 1$.

Suppose there exists $m  \geq 1$ such that $\pi_m$ is a retraction. So there exists $\xi \colon M \rt W_m$ such that $\pi_m\circ \xi = 1_M$ in $\CMS(A)$. It follows that in $\md(A)$  the map $1_M - \pi_m\circ \xi \colon M \rt M$ factors through a  free $A$-module. It follows \cite[2.2(2)]{AR} that $\pi_m \circ\xi$ is an isomorphism.  This implies that $r^m \alpha$ is split; which is a contradiction.

As $\pi_n$ is not a retraction and $s$ is an AR-triangle it follows that $\pi_n$ has a lift $W_n \rt E$. So we have a morphism of triangles
$\theta \colon \beta_n \rt s$ with $\theta_M = 1_M$. So we get
\begin{align*}
  h &= \theta_{\Om^{-1}(L)} \circ w_n \\
   &= \theta_{\Om^{-1}(L)} \circ r^nw.
\end{align*}
It follows that $h \in \m^n\sHom_A(M, \Om^{-1}(N)) $ for all $n \geq 1$. By Krull's intersection we get $h = 0$ which contradicts (AR2).

By our proof of (2) $\implies $ (1) we get $\tau^t(M) = \Om^{-d+2}(M)$.
\end{proof}
\begin{lemma}\label{fart}
Let $(A,\m), (B,\n)$ be Henselian Gorenstein rings. Let \\ $\Psi \colon \CMS(A) \rt \CMS(B)$ be an equivalence of triangulated categories with inverse $\Phi$. \\
Let $\eta_X \colon \Psi(\Om_A^{-1}(X)) \rt \Om_B^{-1}(\Psi(X))$ be the natural isomorphism associated to $\Psi$.
We then have:
\begin{enumerate}[\rm (1)]
  \item If $M$ is an indecomposable MCM $A$-module then $\Psi(M)$ is also indecomposable.
  \item Let $$ s: N \xrightarrow{f} E \xrightarrow{g} M \xrightarrow{h} \Om_A^{-1}(N)$$  be  an AR-triangle in $\CMS(A)$.
  Then $$ \Psi(s)\colon  \Psi(N) \xrightarrow{\Psi(f)} \Psi(E) \xrightarrow{\Psi(g)} \Psi(M) \xrightarrow{\eta_N \circ\Psi(h)} \Om_B^{-1}(\Psi(N))$$
is an AR-triangle in $\CMS(B)$.
\end{enumerate}
\end{lemma}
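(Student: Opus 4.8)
The plan is to verify the three AR-triangle axioms for $\Psi(s)$ one at a time, transporting each along the equivalence $\Psi$. Part (1) is the routine observation that an additive equivalence preserves indecomposability: in a Krull--Schmidt category $X$ is indecomposable if and only if $\sHom(X,X)$ is local, and $\Psi$ induces a ring isomorphism $\sHom_A(M,M) \cong \sHom_B(\Psi(M),\Psi(M))$; so $M$ indecomposable forces $\Psi(M)$ indecomposable. First I would record this and note it already gives (AR1) for $\Psi(s)$, since $\Psi(N)$ and $\Psi(M)$ are indecomposable.

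Next I would check that $\Psi(s)$ is genuinely a triangle in $\CMS(B)$. Applying the triangulated functor $\Psi$ to $s$ yields an exact triangle
$\Psi(N) \xrightarrow{\Psi(f)} \Psi(E) \xrightarrow{\Psi(g)} \Psi(M) \xrightarrow{\Psi(h)} \Psi(\Om_A^{-1}(N))$,
and then composing the connecting map with the natural isomorphism $\eta_N \colon \Psi(\Om_A^{-1}(N)) \rt \Om_B^{-1}(\Psi(N))$ gives an isomorphic triangle whose third term is $\Om_B^{-1}(\Psi(N))$; this is exactly the triangle written in the statement. For (AR2) I need $\eta_N \circ \Psi(h) \neq 0$. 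Since $\eta_N$ is an isomorphism it suffices that $\Psi(h) \neq 0$, and since $\Psi$ is faithful (being an equivalence) and $h \neq 0$ by (AR2) for $s$, we get $\Psi(h) \neq 0$. Hence (AR2) holds.

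For (AR3) I would use the characterization (AR3g) from the excerpt: the third map in an AR-triangle is detected by the property that $g$ admits a lift along which every non-retraction into $M$ factors. Concretely, let $W$ be any object of $\CMS(B)$ and $s \colon W \rt \Psi(M)$ a morphism that is not a retraction; I must produce $q \colon W \rt \Psi(E)$ with $\Psi(g)\circ q = s$. Transport via the inverse equivalence $\Phi$: set $W' = \Phi(W)$ and $t = \Phi(s)\colon W' \rt \Phi\Psi(M) \cong M$ (identifying via the natural isomorphism $\Phi\Psi \cong \mathrm{id}$). Because $\Phi$ is an equivalence, $t$ is a retraction if and only if $s$ is, so $t$ is not a retraction; applying (AR3g) to the AR-triangle $s$ in $\CMS(A)$ produces a lift $q' \colon W' \rt E$ with $g \circ q' = t$. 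Now apply $\Psi$ and chase the natural isomorphisms $\Psi\Phi \cong \mathrm{id}$ to obtain the desired $q$. The one point requiring care — and the main obstacle — is bookkeeping the coherence of the natural isomorphisms $\eta$, $\Psi\Phi \cong \mathrm{id}$, $\Phi\Psi \cong \mathrm{id}$, so that the lift produced downstairs really does satisfy $\Psi(g) \circ q = s$ on the nose rather than merely up to a canonical automorphism; this is a diagram chase using naturality of $\eta_X$ in $X$ (the hypothesis that $\eta$ is the natural isomorphism associated to $\Psi$), and no deeper input is needed. Once (AR1), (AR2), (AR3) are verified, $\Psi(s)$ is an AR-triangle ending at $\Psi(M)$, which is the claim.
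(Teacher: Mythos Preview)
Your proposal is correct and follows essentially the same route as the paper: transport each AR-triangle axiom along the inverse equivalence $\Phi$. The only difference is that the paper verifies (AR3) in its original form (showing $\eta_N \circ \Psi(h)\circ t = 0$ for every non-isomorphism $t$ from an indecomposable, via $h\circ \Phi(t)=0$) rather than the lifting form (AR3g), which sidesteps the coherence bookkeeping you flag as the main obstacle.
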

\begin{proof}
  (1) This is clear.

  (2) By (1) we get that $\Psi(M), \Psi(N)$ are indecomposable. Also clearly $\eta_N \circ\Psi(h)\neq 0$. Let $W$ be an indecomposable module in $\CMS(B)$ and let $t \colon W \rt \Psi(M)$ be a map which is not a isomorphism. Then  $\Phi(t) \colon \Phi(W) \rt M$ is a not an isomorphism with $\Phi(W)$ an indecomposable module in $\CMS(A)$. As $s$ is an AR-triangle in $\CMS(A)$ we get that $h \circ \Phi(t) = 0$ It follows that $\eta_N \circ\Psi(h) \circ t =  0$. Thus $\Psi(s)$ is an AR-triangle in $\CMS(B)$.
\end{proof}
As a consequence of the above result we give,
\begin{proof}[Proof of  Lemma \ref{isolated}]
As $\Psi$ is an additive functor, it suffices to show that if $M$ is  indecomposable and free on $\Spec^0(A)$ then $\Psi(M)$ is free on $\Spec^0(B)$.
By \ref{exist-AR-triangle} there exists an AR triangle $s$ in $\CMS(A)$ ending at $M$. By \ref{fart} we get that $\Psi(s)$ is an AR-triangle in $\CMS(B)$ ending at
$\Psi(B)$. So again by \ref{exist-AR-triangle} we get that $\Psi(M)$ is free on $\Spec^0(B)$.
\end{proof}
We now give a proof of Theorem \ref{dim}. We restate it for the convenience of the reader.
\begin{theorem}\label{mom}
Let $(A,\m), (B, \n)$ be Henselian Gorenstein  with $\CMS(A)$ triangle equivalent to $\CMS(B)$.
Then
\begin{enumerate}[\rm (1)]
\item
If $A$ is not an abstract hypersurface ring then $\dim A = \dim B$.
\item
If $A$ is an abstract hypersurface ring having an MCM $A$-module $M$ with $\Omega(M) \ncong M$ then $\dim A - \dim B$ is even.
\end{enumerate}
\end{theorem}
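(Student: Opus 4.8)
My plan is to assemble the Auslander--Reiten--triangle results of the previous section into one observation and then feed it two standard ring-theoretic facts. The observation is: suppose $\Psi\colon\CMS(A)\rt\CMS(B)$ is a triangle equivalence with inverse $\Phi$, and let $M$ be an indecomposable, non-free MCM $A$-module free on $\Spec^0(A)$. By Proposition~\ref{exist-AR-triangle} there is an AR-triangle ending at $M$ with left-hand term $\tau^t(M)\cong\Om_A^{-\dim A+2}(M)$; by Lemma~\ref{fart} its $\Psi$-image is an AR-triangle in $\CMS(B)$ ending at the indecomposable module $\Psi(M)$ (non-free, since $\Psi$ is an equivalence and $M\neq0$ in $\CMS(A)$), so $\Psi(M)$ is free on $\Spec^0(B)$ and, by Proposition~\ref{exist-AR-triangle} again, that triangle has left-hand term $\tau^t(\Psi(M))\cong\Om_B^{-\dim B+2}(\Psi(M))$; but the left-hand term is also $\Psi(\tau^t(M))\cong\Om_B^{-\dim A+2}(\Psi(M))$ since a triangle functor commutes with the shift $\Om^{-1}$ up to natural isomorphism. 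Uniqueness of AR-triangles (Proposition~\ref{unique-AR}) forces $\Om_B^{\dim A-\dim B}(\Psi(M))\cong\Psi(M)$, and applying $\Phi$ gives
\[
\Om_A^{\,\dim A-\dim B}(M)\cong M\quad\text{in }\CMS(A).
\]

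For (1) I would argue as follows. If $A$ is not an abstract hypersurface then $A$ is not regular, so $N:=\Om_A^{\dim A}(k)$ is a non-free MCM $A$-module, free on $\Spec^0(A)$ because $k_P=0$ for $P\neq\m$; since $A$ is Henselian, $\CMS(A)$ is Krull--Schmidt, so $N$ has a non-free indecomposable summand and every indecomposable summand of $N$ is free on $\Spec^0(A)$. If $\dim A\neq\dim B$, put $p=|\dim A-\dim B|\geq1$; applying the observation to each summand of $N$ gives $\Om_A^{p}(N)\cong N$ in $\CMS(A)$, so (comparing $i$-th Betti numbers, $i\geq1$, which ignore free summands) $\beta_n^A(k)$ is periodic of period $p$ for $n\gg0$, hence bounded, hence $\cx_A k\leq1$, hence $A$ is an abstract complete intersection of codimension $\leq1$ by Gulliksen's theorem \cite[2.3]{Gull-dev} --- contradicting that $A$ is not an abstract hypersurface. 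So $\dim A=\dim B$.

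For (2) I would first note that $B$ is an abstract hypersurface by Theorem~\ref{theorem-ci}, that $\CMS(A)\neq0$ (so $A,B$ are genuinely singular), and that over any hypersurface $R$ one has $\Om_R^2(X)\cong X$ in $\CMS(R)$ for all $X$ (two-periodicity of matrix factorisations, descending from $\widehat R$ since completion reflects stable isomorphism of finitely generated modules over a Henselian local ring). Assume $\dim A-\dim B$ is odd. I would pick an indecomposable non-free MCM $A$-module $M$ free on $\Spec^0(A)$ with $\Om_A(M)\ncong M$ in $\CMS(A)$: starting from the module given by hypothesis, passing to an indecomposable summand on which $\Om_A$ --- an involution on stable isomorphism classes of non-free MCM modules over a hypersurface --- still acts nontrivially, this summand being automatically free on $\Spec^0(A)$ when $A$ is an isolated singularity, in particular in the multiplicity-$\geq3$ case of the Remark. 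The observation gives $\Om_A^{\dim A-\dim B}(M)\cong M$; since the exponent is odd and $\Om_A^2(X)\cong X$ for all $X$, this isomorphism reads $\Om_A(M)\cong M$ --- a contradiction. Hence $\dim A-\dim B$ is even.

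The main obstacle is really the category theory already carried out in the excerpt: that $\CMS(A)$, although not Hom-finite, has unique AR-triangles and that these exist exactly for modules free on $\Spec^0(A)$, with $\tau^t=\Om^{-\dim A+2}$ (Propositions~\ref{unique-AR} and~\ref{exist-AR-triangle}). Granting those, what remains above is bookkeeping with Betti numbers and with two-periodicity over hypersurfaces, with one delicate point in (2): producing a module free on the punctured spectrum that still witnesses $\Om_A(M)\ncong M$. This is immediate for isolated singularities but in general requires the extra input indicated (an Ulrich module, as in the Remark, when the multiplicity is at least three).
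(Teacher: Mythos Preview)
Your proof follows the paper's strategy: both derive from the AR-triangle machinery (Propositions \ref{unique-AR}, \ref{exist-AR-triangle} and Lemma \ref{fart}) that $\Om_A^{\dim A-\dim B}(M)\cong M$ in $\CMS(A)$ for any indecomposable non-free MCM $M$ free on $\Spec^0(A)$, and then feed in a module that cannot be periodic of that period. The one genuine difference is the choice of test module in part (1). The paper takes an indecomposable summand $U$ of the MCM approximation $X(\m)$ of $\m$ and invokes \cite[4.1]{AR} to get that $U$ is \emph{extremal}, hence has eventually strictly increasing Betti numbers; periodicity then dies in one line. You instead take $\Om_A^{\dim A}(k)$, deduce that the Betti numbers of $k$ are eventually periodic and hence bounded, and finish with Gulliksen's characterisation \cite[2.3]{Gull-dev}. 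Your route avoids the external input from \cite{AR} and uses only results already invoked elsewhere in the paper; the paper's route trades that for a slightly sharper endgame.

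For part (2) you again track the paper, which simply writes ``as in case (1) we get $\Om_A^{s-r}(M)\cong M$'' and concludes that $s-r$ is even. You correctly isolate the delicate point: the AR-triangle step needs an indecomposable witness with $\Om(M_0)\ncong M_0$ that is also free on $\Spec^0(A)$, and the hypothesis of the theorem does not supply this in general. The paper's proof does not address this either, so the gap you flag is present in the original argument as well; your restriction to isolated singularities (or to the multiplicity-$\geq 3$ case via Ulrich modules, as in the Remark) is an honest accounting of where the argument actually lands.
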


\s \label{P} Let $M$ be an $A$-module. For $i \geq 0$  let $\beta_i(M) = \dim_k \Tor^A_i(M, k)$ be its $i^{th}$ \emph{betti}-number.  Let $P_M(z) = \sum_{n \geq 0}\beta_n(M)z^n$, the \emph{Poincare series} of $M$.
 Set
 \[
 \curv{M} =  \limsup (\beta_n(M))^{\frac{1}{n}}.
 \]
 It is possible that $\cx(M) = \infty$, see \cite[4.2.2]{Av-inf}. However $\curv(M)$ is finite for any module $M$  \cite[4.1.5]{Av-inf}.  It can be shown that if $\cx(M) < \infty $ then $\curv(M) \leq 1$. We also have that $ \curv{M} \leq \curv{k}$, see \cite[4.2.4]{Av-inf}.

\begin{proof}[Proof of Theorem \ref{mom}]
Set $r = \dim A$ and $s = \dim B$. Let $\Psi \colon \CMS(A) \rt \CMS(B)$ be an equivalence with inverse $\Phi$.

  Case 1: $A$ is \emph{not} a hypersurface ring.\\
   Suppose if possible $r \neq s$. Without loss of generality we may assume $r < s$.
  Let $X(\m)$ be the MCM approximation of $\m$. Let  $U$ be an indecomposable summand of $X(\m)$. Then by \cite[4.1]{AR} $U$ is extremal; i.e., if  $A$ is a complete intersection
  of codimension $c \geq 2$ then $\cx_A U = c$ and if $A$ is not a complete intersection then $\curv U = \curv k > 1$. In particular there exists $n_0$ such that
  $\beta^A_{n + 1}(M) > \beta^A_n(M)$ for all $n \geq n_0$. Set $V = \Om^{n_0}_A(U)$.

  Notice $X(\m)$ is free on $\Spec^0(A)$. It follows that $U$ and hence $V$ is free on $\Spec^0(A)$. By \ref{exist-AR-triangle}  we have an AR-triangle ending at $V$ in $\CMS(A)$.
  \[
  \alpha \colon \tau_A^t(V) \rt E \rt V \rt \Om^{-1}_A( \tau_A(V))
  \]
  By \ref{exist-AR-triangle} $\tau_A^t(V) = \Om^{-r + 2}_A(V)$.  By Lemma \ref{fart} $\Psi(\alpha)$ is an AR-triangle ending at $\Psi(V)$.
  So  $\Psi(\tau_A^t(V)) \cong \Om_B^{-s + 2}(\Psi V)$. But we also have
  \[
  \Psi(\tau^t_A(V)) = \Psi(\Om_A^{-r + 2}(V)) \cong \Om_B^{-r + 2}(\Psi(V)).
  \]
  So we have $\Om_B^{s-r}(\Psi(V)) \cong \Psi(V)$. Applying  $\Phi$ we get  $\Om^{s-r}_A(V) \cong V$. This implies $\beta^A_{s-r}(V) = \beta^A_0(V)$ which is a contradiction.

Case 2: $A$ is an abstract hypersurface ring having an MCM $A$-module $M$ with $\Omega(M) \ncong M$ then $\dim A - \dim B$ is even.

By Theorem \ref{theorem-ci}; $B$ is also an abstract hypersurface.
If $r = s$  then  we have nothing to prove. Notice $\Omega_B(\Psi(M)) \ncong \Psi(M)$. So we may without any loss of generality assume $r  < s$.
As in case (1) we get $\Om_A^{s-r}(M) \cong M$. It follows that $s-r$ is even.
\end{proof}

\s \label{ar-triangle-quiver} We now assume $(A,\m)$  is a Henselian Gorenstein isolated singularity and algebraically closed residue field. Let $\Gamma(A)$ be the AR-quiver of $A$ and let $\Gau(A)$ be the stable AR-quiver of $A$.
Recall the vertices of $\Gau(A)$ are isomorphism classes of indecomposable non-free MCM  $A$-modules. Furthermore if $M, N$ are non-free MCM indecomposable $A$-modules.
Let $0 \rt \tau(M) \rt E_M \rt M \rt 0$ be the AR-sequence ending at $M$. Let $r$ be the number of copies of $N$ in direct summands of $E_M$ (note $r = 0$ is possible).
 Then there  $r$ arrows from $N$ to $M$ in $\Gau(A)$; see \cite[5.5]{Y}. Note any AR-triangle ending at $M$ is isomorphic to
\[
s_M \colon \tau(M) \rt E_M \rt M \rt \Om^{-1}(\tau(M).
\]
We call $E_M$ to be the middle term of $s_M$.
We now give
\begin{corollary}
(with hypotheses as in \ref{ar-triangle-quiver}. Suppose $\CMS(A)$  is triangle equivalent to $\CMS(B)$. Then $\Gau(A) \cong \Gau(B)$
\end{corollary}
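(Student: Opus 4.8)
The plan is to transport, through the equivalence, the data defining the stable AR-quiver: its vertices (isomorphism classes of indecomposable non-free MCM modules), its arrows (multiplicities of indecomposable summands in the middle terms of AR-triangles), and the translation $\tau^t$. Fix a triangle equivalence $\Psi \colon \CMS(A) \rt \CMS(B)$ with inverse $\Phi$, and keep the notation $\eta_X \colon \Psi(\Om^{-1}(X)) \rt \Om^{-1}(\Psi(X))$ of Lemma \ref{fart} for the associated natural isomorphism.

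For the vertices: by Lemma \ref{fart}(1) applied to both $\Psi$ and $\Phi$, the functor $\Psi$ preserves indecomposability in both directions, and an object of $\CMS$ is zero precisely when it is free; hence $M \mapsto \Psi(M)$ is a bijection between isomorphism classes of indecomposable non-free MCM $A$-modules and of indecomposable non-free MCM $B$-modules. This is the required bijection on vertex sets.

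For the arrows: let $M$ be indecomposable and non-free. Since $A$ is an isolated singularity, $M$ is free on $\Spec^0(A)$, so by Proposition \ref{exist-AR-triangle} there is an AR-triangle ending at $M$, which as in \ref{ar-triangle-quiver} we write $s_M \colon \tau(M) \rt E_M \rt M \rt \Om^{-1}(\tau(M))$. By Lemma \ref{fart}(2), $\Psi(s_M) \colon \Psi(\tau(M)) \rt \Psi(E_M) \rt \Psi(M) \rt \Om^{-1}(\Psi(\tau(M)))$ is an AR-triangle ending at $\Psi(M)$; by uniqueness of AR-triangles (Proposition \ref{unique-AR}) it is isomorphic to $s_{\Psi(M)}$, so comparing middle terms gives $\Psi(E_M) \cong E_{\Psi(M)}$ in $\CMS(B)$. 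Since $B$ is Henselian, $\CMS(B)$ is Krull--Schmidt, and $\Psi$ is additive; thus if $E_M \cong \bigoplus_i N_i^{\oplus a_i}$ is the decomposition into pairwise non-isomorphic indecomposable objects of $\CMS(A)$, then $E_{\Psi(M)} \cong \bigoplus_i \Psi(N_i)^{\oplus a_i}$ with the $\Psi(N_i)$ pairwise non-isomorphic. Hence the multiplicity of any indecomposable $N$ in $E_M$ equals the multiplicity of $\Psi(N)$ in $E_{\Psi(M)}$, and by the description of arrows recalled in \ref{ar-triangle-quiver} the number of arrows $N \rt M$ in $\Gau(A)$ equals the number of arrows $\Psi(N) \rt \Psi(M)$ in $\Gau(B)$. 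This yields an isomorphism of quivers $\Gau(A) \cong \Gau(B)$.

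Finally, for compatibility with the translation, iterating $\eta_X$ gives $\Psi(\Om^{-n}(X)) \cong \Om^{-n}(\Psi(X))$ for all $n$, so $\Psi(\tau^t(M)) = \Psi(\Om^{-\dim A + 2}(M)) \cong \Om^{-\dim A + 2}(\Psi(M))$, whereas $\tau^t(\Psi(M)) = \Om^{-\dim B + 2}(\Psi(M))$; these coincide because $\dim A = \dim B$ if $A$ is not a hypersurface (Theorem \ref{dim}(1)), while if $A$ is a hypersurface then on $\CMS(B)$ one has $\Om^2 \cong \operatorname{id}$ and $\dim A - \dim B$ is even (Theorem \ref{dim}(2), the remaining degenerate case forcing $\Om_A \cong \operatorname{id} \cong \Om_B$ directly). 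The step I expect to be delicate is the identification $\Psi(E_M) \cong E_{\Psi(M)}$: it relies on the nontrivial facts that a triangle equivalence sends AR-triangles to AR-triangles (Lemma \ref{fart}) and that AR-triangles in $\CMS$ are unique up to isomorphism (Proposition \ref{unique-AR}), and it is precisely because $E_M$ is only well-defined up to isomorphism in $\CMS$ that comparing it against $\Psi(E_M)$ is meaningful.
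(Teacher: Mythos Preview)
Your argument is correct and follows the same route as the paper: define the vertex map by $[M]\mapsto[\Psi(M)]$, transport the AR-triangle $s_M$ via Lemma~\ref{fart}(2), and read off arrow multiplicities from the Krull--Schmidt decomposition of the middle term. Your final paragraph on translation-compatibility is extra: the paper concludes only that $f$ is an isomorphism of graphs and does not verify $\Psi\circ\tau^t\cong\tau^t\circ\Psi$, so that discussion (while plausible) is not part of what is being asked.
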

\begin{proof}
Let $\Psi \colon \CMS(A) \rt \CMS(B)$ be an equivalence with inverse $\Phi$. Let $\Gau(A),\Gau(B)$ be stable AR-quiver of $A$ and $B$ respectively. Define
$f \colon \Gau(A) \rt \Gau(B)$ by mapping $[M] $ to $[\Psi(M)]$. Let $s_M$ be the AR-triangle ending at $M$. If there are $r$ arrows from $N$ to $M$ in $\Gau(A)$, then there are precisely $r$ direct summands of $N$ in the middle term of $s_M$.
By Lemma \ref{fart} we get that that $\Psi(s_M) $ is the AR-triangle ending at $\Psi(M)$. By looking at the middle term of $\Psi(s_M)$ it follows that there are precisely $r$
arrows from $\Psi(N)$ to $\Psi(M)$. It follows that $f$ is an isomorphism of graphs.
\end{proof}

\section{Periodic Complexes}
We first discuss some preliminaries regarding complexes. Let $(R,\m)$ be a Noetherian local ring and let $\md(R)$ be the category of finitely generated $R$-modules.
Let $\Cc(\md(R))$ be the category of  complexes (possibly unbounded at both ends) in $\md(A)$ and let $\Kc(\md(R))$ be the corresponding homotopy category.
Let $\Kc(\proj R)$ be the subcategory of $\Kc(\md(R))$ consisting of complex of projective $A$-modules. Let $\Fb \in \Kc(\proj R)$. Then we say $\Fb$ is minimal if $\partial(\Fb) \subseteq \m \Fb$. We index complexes cohomologically. By a simple split exact complex we mean a complex $\mathbb{X}$ such that  there exist $i_0$ such that
$\mathbb{X}^n = 0$ for $n \neq i_0, i_0 + 1$; also $\mathbb{X}^{i_0} = \mathbb{X}^{i_0 + 1} = R$ and $\partial^{i_0} = 1_R$. Note $\mathbb{X} = 0$ in $\Kc(\proj R)$. It is well known that if $\Gb$ is a complex of free $R$-modules then $\Gb = \Fb \oplus \mathbb{Y}$ where $\Fb$ is minimal and $\mathbb{Y}$ is a direct sum of simple split exact complexes.
Thus $\Gb \cong \Fb$ in $\Kc(\proj R)$.

We will need the following well-known result.
\begin{lemma}\label{hom-minimal}
(with hypotheses as above). If $\Fb, \Gb$ are minimal complexes of free $R$-modules. If $\Fb \cong \Gb$ in $\Kc(\proj R)$ then $\Fb \cong \Gb$ in $\Cc(\md(R))$.
\end{lemma}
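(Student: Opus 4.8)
The plan is to unwind the definitions and reduce the statement to a standard fact about minimal complexes, namely that a homotopy equivalence between minimal complexes of free modules over a local ring is in fact an isomorphism of complexes. First I would recall that by hypothesis there are chain maps $\varphi \colon \Fb \rt \Gb$ and $\psi \colon \Gb \rt \Fb$ with $\psi\varphi \simeq 1_{\Fb}$ and $\varphi\psi \simeq 1_{\Gb}$. The key point is minimality: if $\Fb$ is minimal, then any chain map $\Fb \rt \Fb$ homotopic to the identity is itself an isomorphism of complexes. To see this, write $\psi\varphi = 1_{\Fb} + (\partial h + h\partial)$ for a homotopy $h$; in each degree $n$ the map $(\psi\varphi)^n - 1$ has image in $\m \Fb^n$ because $\partial(\Fb) \subseteq \m \Fb$ forces $\partial h + h\partial$ to land in $\m \Fb$ in every degree. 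Hence $(\psi\varphi)^n \colon \Fb^n \rt \Fb^n$ is an endomorphism of a finitely generated free module which is the identity modulo $\m$, so by Nakayama it is surjective, and being a surjective endomorphism of a Noetherian module it is an isomorphism. Symmetrically each $(\varphi\psi)^n$ is an isomorphism.

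Once $\psi\varphi$ and $\varphi\psi$ are isomorphisms of complexes in each degree, it follows formally that $\varphi^n$ is injective (from $\psi^n\varphi^n$ iso) and surjective (from $\varphi^n\psi^n$ iso), hence $\varphi$ is a degreewise isomorphism; and $\varphi$ commutes with the differentials by assumption. Therefore $\varphi$ is an isomorphism $\Fb \cong \Gb$ in $\Cc(\md(R))$, which is exactly the assertion.

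I would also remark briefly on why the decomposition statement preceding the lemma is relevant but not needed here: that statement lets one replace an arbitrary complex of free modules by a minimal one up to homotopy, so the lemma says minimality pins down the homotopy type honestly. The only real content is the Nakayama argument above, so I do not expect a genuine obstacle; the one bookkeeping subtlety is making sure the homotopy $h$ is degreewise $R$-linear (which it is, being a morphism in $\Cc(\md(R))$) so that $\partial h + h\partial$ really does have image in $\m\Fb$ in every degree, using $\partial(\Fb)\subseteq \m\Fb$ on both occurrences of $\partial$. This is immediate once stated carefully.
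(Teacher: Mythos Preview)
Your argument is correct and is the standard proof of this fact. The paper itself states the lemma as ``well-known'' and gives no proof, so there is nothing to compare against; your Nakayama argument is precisely the expected justification and would serve as a complete proof here.
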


\s Let $(A,\m)$ be a Gorenstein local ring. Let $\Kc_{ac}(\proj A)$ be the homotopy category of acyclic complexes of free $A$-modules. If $M$ is a MCM $A$-module then let $\Fb_M$ be a complete resolution of $M$. By \cite[4.4.1]{Bu} we have a triangle equivalence $\Theta \colon \CMS(A) \rt \Kc_{ac}(\proj A)$ with $\Theta(M) = \Fb_M$.
We give a proof of Theorem \ref{periodic-complex}. We restate it for the convenience of the reader.
\begin{theorem}\label{periodic-complex-exam}
Let $(R,\m)$ be a Noetherian local ring which is a quotient of a regular local ring. Then there exists a minimal periodic (of period $2s$)co-chain complex $\Fb$ of  free $R$-modules  (i.e., $\Fb \cong \Fb[2s]$) such that $H^i(\Fb)$ has finite length for all $i$. If $R$ is regular or if residue field of $R$ is infinite we can choose $s = 1$. In this case there exists a minimal complex $\Gb$ of free modules with $\Gb = \Gb[2]$.
\end{theorem}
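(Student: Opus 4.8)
The plan is to write $\Fb$ down explicitly over $R$ itself, as an infinite sum of shifted copies of the Koszul complex on a system of parameters. This produces a complex satisfying $\Fb=\Fb[2]$ for \emph{every} Noetherian local $R$, so in fact neither the hypothesis that $R$ be a quotient of a regular local ring, nor the restriction to regular $R$ or infinite residue field, is needed for this argument; it proves the statement with $s=1$ uniformly.

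Concretely, let $d=\dim R$, fix a system of parameters $\bx=x_1,\dots,x_d$ of $R$, and let $\Kb=\Kb(\bx;R)$ be the Koszul complex on $\bx$, regarded as a cochain complex of finitely generated free $R$-modules occupying $d+1$ consecutive cohomological degrees. Set
\[
\Fb \;=\; \bigoplus_{j\in\Z}\ \Kb[2j].
\]
In any fixed cohomological degree $n$, only the finitely many indices $j$ whose window of length $d+1$ contains $n$ contribute, so $\Fb^n$ is a finite direct sum of finitely generated free $R$-modules (of rank $2^{d-1}$ for $d\ge 1$, being the sum of the even, resp.\ odd, exterior powers of $R^{d}$ according to the parity of $n$). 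Thus $\Fb$ is a bona fide complex of finitely generated free $R$-modules, unbounded in both directions, whose differential is the block sum of the shifted Koszul differentials, with no interaction among the summands.

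Then I would check the three requirements. Minimality: every Koszul differential has entries $\pm x_i\in\m$, so each $\Kb[2j]$ is minimal and hence so is their direct sum. Periodicity: reindexing $j\mapsto j-1$ gives $\Fb[2]=\bigoplus_{j}\Kb[2j+2]=\bigoplus_{j}\Kb[2j]=\Fb$, so $\Fb=\Fb[2]$ on the nose; in particular $\Gb:=\Fb$ is the complex with $\Gb=\Gb[2]$ promised in the statement. Finite length cohomology: cohomology commutes with direct sums, so $H^n(\Fb)=\bigoplus_{j\in\Z}H^{n-2j}(\Kb)$ is a finite direct sum of homology modules of $\Kb(\bx;R)$; each of these is a finitely generated module over the Artinian ring $R/(\bx)$ (Koszul homology is annihilated by $(\bx)$, which is $\m$-primary), hence of finite length. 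Therefore $H^n(\Fb)$ has finite length for all $n$, which finishes the proof. When $d=0$ the parameter sequence is empty, $\Kb=R$ in a single degree, and $\Fb$ is $\cdots\to R\to 0\to R\to 0\to\cdots$ with $H^n(\Fb)$ equal to $R$ or $0$ — the degenerate case mentioned in the remark after the theorem.

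There is no real obstacle here; the only points that deserve care are that the infinite direct sum is degreewise finitely generated — exactly the observation that $\Kb[2j]$ is supported in a window of width $d+1$, so at most $\lceil (d+1)/2\rceil$ summands meet any given degree — and that taking cohomology of an honest direct sum of complexes reduces the computation to the classical finiteness of Koszul homology of a system of parameters. If instead one wishes to follow the route suggested by Proposition \ref{filt-deg2}, one can pass to a complete intersection $A$ with $\dim A=\dim R$ admitting a surjection $A\to R$ (such an $A$ exists because a regular local ring is \CM, so the defining ideal of $R$ contains a regular sequence of the appropriate length), take the complete resolution of an MCM $A$-module of complexity one that is free on $\Spec^0(A)$ — a minimal $2$-periodic acyclic complex of free $A$-modules — and base change along $A\to R$; the result stays minimal and $2$-periodic, and acquires finite length cohomology because at every non-maximal prime of $A$ that complete resolution is contractible and contractibility is preserved under base change. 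But the Koszul tiling above is shorter and self-contained.
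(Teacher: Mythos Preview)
Your Koszul tiling argument is correct and complete: the complex $\Fb=\bigoplus_{j\in\Z}\Kb(\bx;R)[2j]$ is degreewise finitely generated free, minimal, literally equal to its own shift by $2$, and has finite-length cohomology because Koszul homology on a system of parameters is killed by an $\m$-primary ideal. This proves the theorem with $s=1$ for every Noetherian local ring, with no hypotheses needed.

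This is a genuinely different and far more elementary route than the paper's. The paper realises $R$ as a quotient of an auxiliary singular complete intersection $A$ with $\dim A=\dim R$, then uses the machinery built in earlier sections: the functor $\CMS^0(A)\to\Kc(\proj R)$ given by tensoring complete resolutions is shown to be nonzero, and Corollary~\ref{vb-cor} (periodicity of the Verdier layers $V_i(A)$) produces a periodic object in its image; finite-length cohomology follows because the input module lies in $\CMS^0(A)$. What the paper's approach buys is that the theorem becomes a genuine \emph{application} of the stable-category results, tying it to the theme of the paper; your construction, by contrast, is self-contained and sidesteps all of that. A small bonus of your argument: it yields $\beta(\Fb)=2^{d-1}$, already improving on the $2^d$ of Theorem~\ref{exp}, though still above the conjectural bound $d$ for $d\ge 3$.

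Your closing sketch of the alternative route is close in spirit to the paper's argument, but note that the paper does not simply pick a complexity-one module and tensor; it invokes Corollary~\ref{vb-cor} on the whole functor to locate the periodic image, and the passage from $\Fb\cong\Fb[2]$ in $\Kc(\proj R)$ to a strict equality $\Gb=\Gb[2]$ is handled separately via Proposition~\ref{2-periodic}. Your direct construction makes that last step unnecessary.
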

\begin{proof}
  If $R$ is a singular complete intersection then it has an MCM two periodic module say $E$. The its complete resolution $\Fb_E$ does the job.

  So assume that $R$ is not a singular complete intersection.  We also first assume $R$ is NOT regular. By hypothesis  $R$ is a quotient of a regular local ring say $(Q, \q)$. Set $k = Q/\q$. We choose a regular sequence $f_1,\ldots, f_c \in \q^2$ such that $f_i \in \ann_Q R$ and $c = \dim Q - \dim R$. Set $A = Q/(f_1,\ldots, f_c)$. Note $A$ is a singular complete intersection.
It follows that
  \begin{enumerate}
    \item  $R$ is a quotient of $A$.
    \item $\dim R = \dim A$. Set $d = \dim A$.
    \item $R$ is not free as an $A$-module.
  \end{enumerate}
  Note we have a natural triangulated map $\Kc_{ac}(\proj A) \rt \Kc(\proj R)$ given by $-\otimes_A R$.
  Thus we have a triangulated map $\eta \colon \CMS^0(A) \rt \Kc(\proj(R)$ obtained by composing the previous map with the inclusion
  $\CMS^0(A)  \hookrightarrow \CMS(A) \cong \Kc_{ac}(A)$.
  As $R$ is not a free $A$-module we get that $\Tor_i^A(\Om_A^{d}(k), R) \neq 0$
for all $i \geq 1$. In particular
   $\eta(\Om_A^{d}(k)) \neq 0$. By Corollary \ref{vb-cor} there exists $0 \neq \Fb \in \Kc(\proj R))$ with
  $\Fb[2s] \cong \Fb$ for some $s \geq 1$ and $\Fb = \eta(X)$ for some $X$ in $\CMS^0(A)$. If $k$ is infinite then we can choose $s = 1$.
  We note that $H^i(\Fb) = \Tor^{A}_i(X, R)$ for $i \geq 1$. It follows that $\ell((H^i(\Fb))) < \infty$ for all $i$. Notice by construction $\Fb$ is a minimal complex.
  Therefore by \ref{hom-minimal} we get $\Fb \cong \Fb[2s] $ in $\Cc(\md(R))$.
  If $R$ is regular choose $A = R[X]/(X^2)$ and choose $X = \Om^{d}_A(k)$. Then same argument as before yields a complex $\Fb$ with $\Fb \cong \Fb[2]$.

  Furthermore if $\Fb[2] \cong \Fb$ in $\Cc(\md R)$ then we can by Proposition \ref{2-periodic} construct a minimal complex $\Gb$ with $\Gb = \Gb[2]$.
\end{proof}
We now state a result which we used in the previous Theorem.
\begin{proposition}\label{2-periodic}
Let  $(A,\m)$ be local and assume $\Fb \in \Cc(\proj A)$ with $\Fb \cong \Fb[2]$ and $H^i(\Fb)$ has finite length for all $i$. Then there exists $\Gb \in \Cc(\proj A)$ with
$\Gb = \Gb[2]$ and $H^i(\Gb)$ has finite length for all $i$.
\end{proposition}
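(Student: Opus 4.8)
The plan is to turn the given chain isomorphism $\alpha\colon\Fb\rt\Fb[2]$ into an honest equality by assembling a new complex $\Gb$ out of only the two modules $\Fb^0$ and $\Fb^1$. Write $\alpha^n\colon\Fb^n\xrightarrow{\sim}\Fb^{n+2}$ for the components of $\alpha$ (I use the convention $(\Fb[2])^n=\Fb^{n+2}$; as the shift is by an even integer there is no sign, and the opposite convention is handled by replacing $\alpha$ with $\alpha^{-1}$). These are $A$-linear isomorphisms satisfying the chain-map relations $\partial_\Fb^{n+2}\circ\alpha^n=\alpha^{n+1}\circ\partial_\Fb^n$ for all $n\in\Z$.

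The construction I have in mind is: set $\Gb^n=\Fb^0$ for $n$ even and $\Gb^n=\Fb^1$ for $n$ odd, and for every $q\in\Z$ put $\partial_\Gb^{2q}=\partial_\Fb^0\colon\Fb^0\rt\Fb^1$ and $\partial_\Gb^{2q+1}=(\alpha^0)^{-1}\circ\partial_\Fb^1\colon\Fb^1\rt\Fb^2\rt\Fb^0$. First I would check that $\Gb$ is a complex: $\partial_\Gb^{2q+1}\circ\partial_\Gb^{2q}=(\alpha^0)^{-1}\partial_\Fb^1\partial_\Fb^0=0$ trivially, and $\partial_\Gb^{2q+2}\circ\partial_\Gb^{2q+1}=\partial_\Fb^0\circ(\alpha^0)^{-1}\circ\partial_\Fb^1$ vanishes after rewriting $\partial_\Fb^0=(\alpha^1)^{-1}\partial_\Fb^2\alpha^0$ (the chain-map relation at $n=0$) and using $\partial_\Fb^2\partial_\Fb^1=0$. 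Next, since $\Gb^n$ and $\partial_\Gb^n$ depend only on the parity of $n$, we get $\Gb[2]=\Gb$ on the nose, and each $\Gb^n$ equals $\Fb^0$ or $\Fb^1$, so $\Gb\in\Cc(\proj A)$ (and is a complex of finitely generated free modules whenever $\Fb$ is).

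Then I would compute the cohomology. In odd degrees, injectivity of $(\alpha^0)^{-1}$ gives $\ker\partial_\Gb^{2q+1}=\ker\partial_\Fb^1$, whence $H^{2q+1}(\Gb)=\ker\partial_\Fb^1/\image\partial_\Fb^0=H^1(\Fb)$. In even degrees, the chain-map relation at $n=-1$, namely $\partial_\Fb^1\circ\alpha^{-1}=\alpha^0\circ\partial_\Fb^{-1}$, together with surjectivity of $\alpha^{-1}$, yields $\image\partial_\Fb^1=\alpha^0(\image\partial_\Fb^{-1})$, hence $(\alpha^0)^{-1}(\image\partial_\Fb^1)=\image\partial_\Fb^{-1}$, and therefore $H^{2q}(\Gb)=\ker\partial_\Fb^0/(\alpha^0)^{-1}(\image\partial_\Fb^1)=\ker\partial_\Fb^0/\image\partial_\Fb^{-1}=H^0(\Fb)$. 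Since $H^0(\Fb)$ and $H^1(\Fb)$ have finite length by hypothesis, every $H^i(\Gb)$ has finite length. I would also record that if $\Fb$ is minimal then so is $\Gb$: $\partial_\Fb^0(\Fb^0)\sub\m\Fb^1$ and $\partial_\Fb^1(\Fb^1)\sub\m\Fb^2$, and an $A$-linear isomorphism carries $\m\Fb^2$ onto $\m\Fb^0$, so $\partial_\Gb^{2q+1}(\Fb^1)\sub\m\Fb^0$; this minimality is what is actually invoked in the proof of Theorem \ref{periodic-complex-exam}.

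I do not expect a genuine obstacle here — the argument is pure strictification/bookkeeping. The only point requiring a moment of thought is the choice of the twisted differential $\partial_\Gb^{2q+1}=(\alpha^0)^{-1}\partial_\Fb^1$, which must be arranged so that the complex condition \emph{and} the finite-length-cohomology condition hold at once; the single place where the chain-map relations are genuinely exploited is the identity $\image\partial_\Fb^1=\alpha^0(\image\partial_\Fb^{-1})$ that feeds the even-degree cohomology computation.
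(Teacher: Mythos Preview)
Your proposal is correct and is essentially the same argument as the paper's: both build a strictly $2$-periodic complex from two consecutive terms of $\Fb$, taking one differential unchanged and the other as a differential of $\Fb$ conjugated by a component of the given isomorphism, and then compute the two cohomology groups using the chain-map relations. The only cosmetic difference is that you work with $\Fb^0,\Fb^1$ and the twisted map $(\alpha^0)^{-1}\partial_\Fb^1$, while the paper uses $\Fb^1,\Fb^2$ with $\partial_\Fb^0\psi_0^{-1}$ and $\partial_\Fb^1$; the two constructions are isomorphic via $\psi_0$ in even degrees, and your extra remark on minimality is exactly what is needed downstream.
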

\begin{proof}
  Let $\psi \colon \Fb \rt \Fb[2]$ be an isomorphism. We have a commutative diagram:
  \[
  \xymatrix
{
\Fb
\colon
\ \ar@{->}[r]
  & F^{0}
    \ar@{->}[d]^{\psi_0}
\ar@{->}[r]^{\alpha^0}
& F^{1}
    \ar@{->}[d]^{\psi_1}
\ar@{->}[r]^{\alpha^1}
 & F^{2}
 \ar@{->}[d]^{\psi_2}
 \ar@{->}[r]
 &\
 \\
 \Fb[2]
 \colon
 \ \ar@{->}[r]
   & F^{2}
\ar@{->}[r]^{\alpha^2}
& F^{3}
\ar@{->}[r]^{\alpha^3}
&F^4
\ar@{->}[r]
&\
 }
\]
Set $X = F^2$ and $Y = F^1$.
Also set $u = \alpha^0 \circ \psi_0^{-1}$ and $v = \alpha^1$.
Note
\begin{align*}
  v\circ u &= \alpha^1 \circ (\alpha^0 \circ \psi^{-1}_0) = (\alpha^1 \circ \alpha^0) \circ \psi^{-1}_0 = 0.\\
   u\circ v &= (\alpha^0 \circ \psi^{-1}_0) \circ \alpha ^1 = (\psi_1^{-1} \circ \alpha^2) \circ \alpha^1 = \psi_1^{-1} \circ ( \alpha^2 \circ \alpha^1) = 0.
   \end{align*}
   So we have a circular complex
   \[
   \Gb \colon  \cdots \xrightarrow{v} X \xrightarrow{u} Y \xrightarrow{v} X \xrightarrow{u} \cdots
   \]
   Thus $\Gb = \Gb[2]$.

   We note that $Z_Y = \ker \alpha^1$. As $\psi_0^{-1}$ is bijective we get $B_Y = \image \alpha^0$. It follows that $H_Y = H^1(\Fb)$ and so has finite length.
   Also note that $B_X = \image \alpha_1$. Also $u = \psi_1^{-1} \circ \alpha^2$. So $Z_X = \ker \alpha^2$. Thus  $H_X = H^2(\Fb)$ and so has finite length.
\end{proof}
\s\label{mambo} From now on we will work with minimal complexes $\Fb \in \Kc(\proj(A))$ with $\Fb[2] = \Fb$ and $H^i(\Fb)$ having finite length for all $i \in \Z$.
We first prove:
\begin{lemma}
  \label{rank}
  (with hypotheses as in \ref{mambo}). If $d = \dim A > 0$ then $\rank \Fb^i = \rank \Fb^{i+1}$ for all $i$.
\end{lemma}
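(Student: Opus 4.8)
The plan is to use the $2$-periodicity to reduce to a two-term situation and then localise at a minimal prime of $A$, where lengths are finite and additive. Since $\Fb = \Fb[2]$, up to the obvious identification $\Fb$ is given by two finitely generated free $A$-modules $X = \Fb^{0}$ and $Y = \Fb^{1}$ together with maps $u = \partial^{0} \colon X \rt Y$ and $v = \partial^{1} \colon Y \rt X$ satisfying $v\circ u = 0$ and $u\circ v = 0$; its only two cohomology modules are $H_X = \ker u / \image v$ and $H_Y = \ker v / \image u$, both of finite length by hypothesis. As $\rank \Fb^{i}$ equals $\rank X$ for $i$ even and $\rank Y$ for $i$ odd, the assertion $\rank \Fb^{i} = \rank \Fb^{i+1}$ for all $i$ is precisely the statement $\rank X = \rank Y$, and that is what I would prove.

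Next I would choose a minimal prime $Q$ of $A$. Because $d = \dim A > 0$, the maximal ideal $\m$ is not a minimal prime, so $Q \neq \m$; this is the only place the hypothesis $d > 0$ enters, and it cannot be dropped (for $d = 0$ the complex $\cdots \rt R \rt 0 \rt R \rt 0 \rt \cdots$ is a counterexample, cf.\ Remark~(a) following Theorem~\ref{periodic-complex}). A finite length module is supported only at $\m$, so $(H_X)_Q = (H_Y)_Q = 0$, which means $\ker u_Q = \image v_Q$ and $\ker v_Q = \image u_Q$; in other words the localised complex $\Fb_Q$ is acyclic. Moreover $A_Q$ is a zero-dimensional Noetherian local ring, hence Artinian, so every finitely generated $A_Q$-module has finite length, $\lambda := \ell_{A_Q}(A_Q)$ is a positive integer, and $\ell_{A_Q}(\Fb^{n}_Q) = \lambda \cdot \rank \Fb^{n}$ for every $n$.

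Finally, over the Artinian ring $A_Q$ I would split the acyclic complex into short exact sequences: with $B = \image v_Q = \ker u_Q \subseteq X_Q$ and $C = \image u_Q = \ker v_Q \subseteq Y_Q$, the sequences $0 \rt B \rt X_Q \xrightarrow{u_Q} C \rt 0$ and $0 \rt C \rt Y_Q \xrightarrow{v_Q} B \rt 0$ are exact, so additivity of length gives $\ell_{A_Q}(X_Q) = \ell_{A_Q}(B) + \ell_{A_Q}(C) = \ell_{A_Q}(Y_Q)$. Dividing by $\lambda > 0$ yields $\rank X = \rank Y$, as required. I do not anticipate a real obstacle: the only slightly delicate point is producing a minimal prime distinct from $\m$, which is exactly what $d > 0$ supplies, and the rest is routine length bookkeeping over an Artinian local ring.
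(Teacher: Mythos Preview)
Your proof is correct and follows essentially the same approach as the paper: both localise at a minimal prime $\q \neq \m$ (using $d>0$), observe that the localised complex becomes acyclic because the finite-length cohomology vanishes there, and then deduce $\rank X = \rank Y$ from additivity of length over the Artinian ring $A_\q$. The only cosmetic difference is that the paper packages the length count into a single four-term exact sequence $0 \rt \ker(\beta)_\q \rt Y_\q \rt X_\q \rt \image(\alpha)_\q \rt 0$ together with $\ker(\beta)_\q = \image(\alpha)_\q$, while you split it into two short exact sequences; the content is identical.
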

\begin{proof}
Let $\q$ be a minimal prime of $A$ and let $B = A_\q$.
  We  write $\Fb$ as
  \[
  \cdots \rt  X \xrightarrow{\alpha} Y \xrightarrow{\beta} X \xrightarrow{\alpha} Y \rt \cdots
  \]
  As $\ell(H^i(\Fb)) < \infty $  for all $i$ we get that $\ker(\alpha)_\q = \image(\beta)_\q$ and
$\ker(\beta)_\q = \image(\alpha)_\q$.
We consider the complex
\[
\Cc\colon 0 \rt \ker(\beta) \rt Y \xrightarrow{\beta} X \rt \image(\alpha) \rt 0.
\]
Note $\Cc_\q$ is exact. Counting lengths we get  $\rank_B Y_\q = \rank_B X_\q$. The result follows.
\end{proof}

 \emph{Construction of periodic minimal complexes over \CM \ local rings of dimension $1,2,3$.}\\
We first consider the case when $\dim A = 1$.
\begin{construction}
($\dim A = 1$: Let $x \in \m$ be an $A$-regular element). Consider $\Fb_x$ where $\Fb_x^i = A$ for all $i \in \Z$ and
\begin{enumerate}
  \item $\partial^i = 0$ for $i$ even.
  \item $\partial^i = $ multiplication by  $x$ for $i$ odd.
\end{enumerate}
Then $H^i(\Fb_x) = A/(x)$ for $i$ even and $H^i(\Fb_x) = 0$ for $i$ odd.
\end{construction}

Next we consider the case when $\dim A = 2$.
\begin{construction}
($\dim A = 2$.) Let $x_1,x_2\in \m$ be an $A$-regular sequence and let $u_1,u_2 \in \m$ be another regular sequence.  Set $I = (x_1, x_2)$ and $J = (u_1, u_2)$. Consider $\Fb_{I, J}$ where $\Fb_{I, J}^i = A^2$ for all $i \in \Z$. For convenience set $\Fb_{I, J}^i = X$ for $i$ even with a basis $e_1, e_2$ and $\Fb_{I, J}^i = Y$ for $i$ odd with  a basis $f_1, f_2$. Also set
$p = u_2f_1 - u_1f_2 \in Y$ and $q = x_2e_1 - x_1e_2 \in X$.
\begin{enumerate}
  \item For $i$ even set $\partial^i(e_j) = x_jp$ for $j = 1,2$.
  \item For $i$ odd set $\partial^i(f_j) = u_jq$ for $j = 1,2$.
\end{enumerate}
It is easily verified that $\Fb_{I, J}$  is indeed a complex. To compute cohomology first note that if $\partial^0(ae_1 + be_2) = 0$ then $(ax_1 + bx_2)p = 0$. As $u_2$ is $A$-regular we get $ax_1 + bx_2 = 0$. As $x_1, x_2$ is a regular sequence we get that $ae_1 + be_2 \in Aq$. So $Z^0 = Aq$. Clearly $B^0 = Jq$. It follows that $H^0(\Fb_{I, J} ) \cong A/J$.
By symmetry we get $H^1( \Fb_{I, J})\cong A/I$.
\end{construction}

Finally we consider the case when $\dim A = 3$.
\begin{construction}
($\dim A = 3$.) Let $\bx = x_1,x_2, x_3\in \m$ be an $A$-regular sequence.
Consider $\Fb_{\bx}$ where $\Fb_{\bx}^i = A^3$ for all $i \in \Z$. For convenience set $\Fb_{\bx}^i = X$ for $i$ even with a basis $e_1, e_2,e_3$ and $\Fb_{\bx}^i = Y$ for $i$ odd with  a basis $f_1, f_2, f_3$. Also set
$p   =  -x_3e_1 + x_2e_2 - x_1e_3 \in X$.
\begin{enumerate}
  \item For $i$ odd set $\partial^i(f_j) = x_jp$ for $j = 1,2,3$.
  \item For $i$ even set $\partial^i(e_1) = -x_2f_1 + x_1f_2$, $\partial^i(e_2) = -x_3f_1 + x_1f_3$ and $\partial^i(e_3) = -x_3f_2 + x_2f_3$.
\end{enumerate}
\end{construction}
It is easily verified that $\Fb_{\bx}$  is indeed a complex. Also note the differentials are precisely as in the Koszul complex on $x_1, x_2, x_3$. It follows that $H^0(\Fb_\bx) \cong A/(\bx)$ and $H^1(\Fb_\bx) = 0$.

\s We know that if $\dim A > 0$ and $\Fb$ is a minimal complex of free $A$-modules with $\Fb = \Fb[2]$ and $\ell(H^i(\Fb)) < \infty $ for all $i \in \Z$ then $\rank{\Fb^i}$ is constant for all $i \in  \Z$. We set this constant value by $\beta(\Fb)$. For regular local rings we show the following:
\begin{theorem}\label{regular-periodic}
Let $(A,\m)$ be a regular local ring and let $\Fb \neq 0$ be a minimal complex of free $A$-modules with $\Fb = \Fb[2]$ and $\ell(H^i(\Fb)) < \infty $ for all $i \in \Z$.
\begin{enumerate}[\rm (1)]
  \item If $\dim A = 2$ then $\beta(\Fb) \geq 2$.
  \item If $\dim A = 3$ then $\beta(\Fb) \geq 3$.
  \item If $\dim A = 4$ then $\beta(\Fb) \geq 4$.
\end{enumerate}
\end{theorem}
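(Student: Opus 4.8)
The plan is to establish one claim that yields all three parts at once: \emph{if $b:=\beta(\Fb)\le 3$ then $b\ge d$} (for $d\in\{2,3,4\}$ this suffices, since otherwise $\beta(\Fb)\ge 4\ge d$, and $\beta(\Fb)\ge 1$ because $\Fb\ne 0$). So assume $b\le 3$ and, for contradiction, $b<d$; then $d\ge 2$. By Lemma \ref{rank} we may write $\Fb$ as $\cdots\to A^b\xrightarrow{\alpha}A^b\xrightarrow{\beta}A^b\to\cdots$ with $\alpha\beta=\beta\alpha=0$ and all entries of $\alpha,\beta$ in $\m$; its cohomology modules are $H_1=\ker\beta/\image\alpha$ and $H_0=\ker\alpha/\image\beta$, both of finite length, and not both zero --- otherwise $\Fb$ would be acyclic, hence, being a minimal complex of free modules over the regular ring $A$, equal to $0$. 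Localising at the generic point of $\Spec A$ makes $\Fb$ exact, so $\rank\alpha+\rank\beta=b$; after possibly replacing $\Fb$ by $\Fb[1]$ (which interchanges $\alpha$ and $\beta$) we may assume $r:=\rank\alpha\le\rank\beta$, whence $r\le\lfloor b/2\rfloor\le 1$.

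If $r=0$, then $\alpha=0$ since $A$ is a domain, so $\ker\beta=H_1$ is finite length and torsion-free, hence $0$; then $\operatorname{coker}\beta=H_0$ has finite length, so its zeroth Fitting ideal $(\det\beta)$ is $\m$-primary --- impossible, as a nonzero principal ideal has height $\le 1<d$.

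The substantive case is $r=1$, where the first step is a normal form for $\alpha$: its column space over $K=\operatorname{Frac}(A)$ is a line, and the intersection of this line with $A^b$ is a saturated rank-one submodule, hence reflexive, hence --- a regular local ring being a UFD, over which rank-one reflexive modules are free --- free; writing $\mathbf{g}$ for a generator one obtains $\alpha=\mathbf{g}\,\mathbf{c}^{T}$ for some primitive $\mathbf{c}=(c_1,\dots,c_b)\in A^b$. Then $\image\alpha\cong I_{\mathbf{c}}:=(c_1,\dots,c_b)$, so $\mu(\image\alpha)\le b$, while $\ker\alpha=\ker(\mathbf{c}^{T}\colon A^b\to A)$; moreover $\ker\beta$, being a second syzygy up to a free summand, is reflexive of rank $b-\rank\beta=1$, hence $\ker\beta\cong A$. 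Now split on $H_1$. If $H_1\ne 0$, then $\image\alpha$ sits inside $\ker\beta\cong A$ with finite colength, so $I_{\mathbf{c}}\cong\image\alpha$ is isomorphic to an $\m$-primary ideal $J$; Krull's height theorem then gives $d=\htt J\le\mu(J)=\mu(I_{\mathbf{c}})\le b$, contradicting $b<d$. If $H_1=0$, then $I_{\mathbf{c}}\cong\image\alpha=\ker\beta$ is free, hence (being primitive) $I_{\mathbf{c}}=A$, so $\mathbf{c}^{T}$ is a split epimorphism and $\ker\alpha\cong A^{b-1}$; since $\alpha\beta=0$ gives $\image\beta\subseteq\ker\alpha$, the module $H_0$ is the cokernel of an induced map $\beta'\colon A^b\to A^{b-1}$, which is nonzero (else $\Fb$ is acyclic, so $0$) and of finite length, so its zeroth Fitting ideal $I_{b-1}(\beta')$ --- generated by the $b$ maximal minors of $\beta'$ --- is $\m$-primary, giving $d=\htt I_{b-1}(\beta')\le b$, again a contradiction. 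This completes the argument.

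I expect the crux to be the structural step in the case $r=1$: obtaining the factorisation $\alpha=\mathbf{g}\,\mathbf{c}^{T}$ over $A$ rather than merely over $K$, for which the key input is that saturated rank-one submodules of a free module over a UFD are free, together with the reflexivity of $\ker\beta$; the remainder is careful bookkeeping of which cohomology module is being controlled in each subcase and the Fitting-ideal support identity $\operatorname{Supp}(\operatorname{coker}\phi)=V(\operatorname{Fitt}_0(\operatorname{coker}\phi))$. The argument is uniform in $d$, but it closes only because $b\le 3$ forces $r\le 1$ and hence $\ker\beta$ to be free of rank one; for $b\ge 4$ one must contend with rank-two reflexive modules that need not be free, which is presumably why Conjecture \ref{mine}(1) is open for general $d$.
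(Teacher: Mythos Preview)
Your argument is essentially correct and takes a genuinely different route from the paper, but there is one slip to fix. You cannot arrange \emph{both} $\mathbf{g}$ and $\mathbf{c}$ to be primitive in the factorisation $\alpha=\mathbf{g}\,\mathbf{c}^{T}$: your construction via the saturated column space makes $\mathbf{g}$ primitive, but then $\mathbf{c}$ need not be (take $\alpha$ the $1\times 1$ matrix $(xy)$ over $k[[x,y]]$). Fortunately this does not damage the $H_1=0$ subcase. There you know $I_{\mathbf{c}}\cong\image\alpha=\ker\beta\cong A$, so $I_{\mathbf{c}}$ is free of rank one; hence the exact sequence $0\to\ker\mathbf{c}^{T}\to A^{b}\xrightarrow{\mathbf{c}^{T}}I_{\mathbf{c}}\to 0$ splits, and $\ker\alpha=\ker\mathbf{c}^{T}\cong A^{b-1}$ as you need, without ever asserting $I_{\mathbf{c}}=A$. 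With this correction everything goes through. (Incidentally, in that last subcase the Eagon--Northcott bound $\htt I_{b-1}(\beta')\le 2$ gives a sharper contradiction than Krull's bound $\le b$, though either suffices.)

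The paper's proof is organised very differently. It treats the three dimensions one at a time: for $d=3$ and $d=4$ it first passes to $A/(x)$ for a regular parameter $x$ to inherit the bound $\beta(\Fb)\ge d-1$ from the previous case, and then excludes $\beta(\Fb)=d-1$ by ad hoc arguments. For $b=2$, $d=3$ it shows $H^{i}(\Fb)\ne 0$ for all $i$ by a depth count, writes $\image\partial^{-1}\cong J_{0}=(u,v)$, and proves by an explicit GCD computation that $\m\notin\Ass A/J_{0}$. For $b=3$, $d=4$ it writes the relevant ideal as $aK$, shows $K=A$ by localising at a minimal prime of $K$ (where $\Fb$ becomes exact and the ideal becomes principal), and finishes with depth inequalities. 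Your approach is more uniform: the two structural inputs are that $\ker\beta$ is a rank-one second syzygy over a UFD, hence free, and Krull's height theorem applied either to $\image\alpha$ sitting in $\ker\beta\cong A$ or to the Fitting ideal of $H_{0}$. What your method buys is a single argument covering all three cases and a transparent explanation of why $b\le 3$ is the threshold (namely $r\le\lfloor b/2\rfloor\le 1$, forcing $\ker\beta$ to have rank one); what the paper's method buys is that it avoids invoking reflexivity of second syzygies and the Fitting-ideal support identity, working instead with bare-hands depth and associated-prime computations.
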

To prove this Theorem we need the following:
\begin{lemma}\label{depth2}
Let $(A,\m)$ be a regular local ring with $\dim A \geq 2$ and let $\Fb \neq 0$ be a minimal complex of free $A$-modules with $\Fb = \Fb[2]$ and $\ell(H^i(\Fb)) < \infty $ for all $i \in \Z$.
Then
\begin{enumerate}[\rm (1)]
  \item $H^*(\Fb) \neq 0$.
  \item $\partial^i \neq 0$ for all $i$.
\end{enumerate}
\end{lemma}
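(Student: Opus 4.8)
The plan is to treat the two assertions separately; both reduce to standard facts about regular local rings.

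\emph{Assertion (1).} I would argue by contradiction. If $H^*(\Fb) = 0$, then $\Fb$ is an acyclic complex of finitely generated free $A$-modules, hence an object of $\Kc_{ac}(\proj A)$. Since $A$ is regular, $D_{sg}(A) = 0$, so by Buchweitz's equivalence $\CMS(A) \cong \Kc_{ac}(\proj A)$ (see \cite[4.4.1]{Bu}) the category $\Kc_{ac}(\proj A)$ vanishes; in particular $\Fb \cong 0$ in $\Kc(\proj A)$. As $\Fb$ is minimal and the zero complex is trivially minimal, Lemma \ref{hom-minimal} forces $\Fb = 0$ in $\Cc(\md(A))$, contradicting $\Fb \neq 0$. (This argument uses only that $A$ is regular, not $\dim A \geq 2$.)

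\emph{Assertion (2).} I would first use $2$-periodicity to reduce to a single potential vanishing. Write $\Fb$ as
\[
\cdots \to X \xrightarrow{\alpha} Y \xrightarrow{\beta} X \xrightarrow{\alpha} Y \to \cdots,
\]
where by Lemma \ref{rank} (using $\dim A > 0$) the modules $X, Y$ are free of the same rank and $\beta\alpha = \alpha\beta = 0$. If $\partial^{i_0} = 0$ for some $i_0$, then all differentials of that parity vanish; replacing $\Fb$ by $\Fb[1]$ if necessary (this preserves minimality, $2$-periodicity, finiteness of the lengths $\ell(H^i)$, and non-vanishing), we may assume $\alpha = 0$. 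Since $\Fb \neq 0$, we then have $X \cong Y \cong A^r$ with $r = \beta(\Fb) \geq 1$, and the cohomology is read off from the four-term sequence $0 \to \ker\beta \to Y \xrightarrow{\beta} X \to \operatorname{coker}\beta \to 0$: namely $H^{\mathrm{odd}}(\Fb) \cong \ker\beta$ and $H^{\mathrm{even}}(\Fb) \cong \operatorname{coker}\beta$, and both have finite length by hypothesis.

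The crux is then a short homological argument. The module $\ker\beta$ is a submodule of the free module $Y$ over the domain $A$, hence torsion-free; a nonzero torsion-free module over a domain of positive dimension cannot have finite length, so $\ker\beta = 0$. Thus $0 \to Y \xrightarrow{\beta} X \to \operatorname{coker}\beta \to 0$ is a free resolution, whence $\projdim_A \operatorname{coker}\beta \leq 1$. But $\operatorname{coker}\beta$ has finite length; were it nonzero, then $\depth_A \operatorname{coker}\beta = 0$, and the Auslander-Buchsbaum formula would give $\projdim_A \operatorname{coker}\beta = \depth A = \dim A \geq 2$, a contradiction. Hence $\operatorname{coker}\beta = 0$, i.e. $\beta$ is surjective; but minimality gives $\beta(Y) \subseteq \m X \subsetneq X$ (as $X \neq 0$), again a contradiction. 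Therefore no $\partial^i$ can vanish.

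I expect no genuine obstacle here: the only delicate points are the bookkeeping in the reduction to $\alpha = 0$ — constant ranks via Lemma \ref{rank}, the harmless replacement of $\Fb$ by $\Fb[1]$, and excluding the degenerate case $X = 0$ — while the homological inputs (Buchweitz's theorem, torsion-freeness of submodules of free modules over a domain, and Auslander-Buchsbaum) are all completely standard.
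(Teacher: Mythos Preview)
Your proof is correct and follows essentially the same approach as the paper. For (1) the arguments are identical (Buchweitz's equivalence plus minimality), and for (2) both proofs first use torsion-freeness to force $\ker\beta=0$ and then a depth computation on $\operatorname{coker}\beta$; the only minor difference is that the paper invokes part (1) to get $H^1\neq 0$ and then the depth lemma, whereas you bypass (1) by using Auslander--Buchsbaum to force $\operatorname{coker}\beta=0$ and reach the contradiction via minimality instead.
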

\begin{proof}
(1) Suppose if possible $H^*(\Fb) = 0$. Then $\Fb \in \Kc_{ac}(\proj A)$. But \\  $\Kc_{ac}(\proj A) \cong \CMS(A)$. As $A$ is regular local, any MCM $A$-module is free. So $\CMS(A) = 0$. Therefore $\Fb$ is contractible. As $\Fb$ is minimal, it can be easily shown that if $\Fb$ is contractible then $\Fb = 0$, a contradiction.

  (2) Suppose if possible $\partial^{-1} = 0$. If $\ker \partial^0 \neq 0$ then it has positive depth. It follows that $\depth H^0(\Fb) > 0$ a contradiction. So $\ker \partial^0 = 0$.  So $H^0(\Fb) = 0$.  By (1) we have that $H^1(\Fb) \neq 0$. We have an exact sequence
  \[
  0 \rt \Fb^0 \xrightarrow{\partial^0} \Fb^1 \rt H^1(\Fb) \rt 0.
  \]
  By depth Lemma we get $\depth H^1(\Fb) > 0$, a contradiction.  Thus $\partial^{-1} \neq 0$. By symmetry $\partial^0 \neq 0$.
\end{proof}

We now give
\begin{proof}[Proof of Theorem \ref{regular-periodic}]
  (1) Suppose if possible there exists $\Fb$ with $\beta(\Fb) = 1$.
  Then we have
  \[
  \Fb \colon \cdots \rt A \xrightarrow{x} A \xrightarrow{y}  A \xrightarrow{x} A \xrightarrow{y} \rt \cdots
  \]
  Then $xy = 0$. As $A$ is a domain we get $x = 0$ or $y = 0$. Say $y = 0$. Then $H^1(\Fb) = A/(x)$ which does not have finite length as $\dim A = 2$; which is a contradiction.

  (2) Let $x$ be a regular parameter. Then $A/(x)$ is regular local of dimension $2$. Note $\Fb/x \Fb$ satisfies the hypothesis of (1). It follows that $\beta(\Fb) = \beta(\Fb/x \Fb) \geq 2$. We prove that $\beta(\Fb) = 2$ is not possible.

  Suppose if possible  there exists $\Fb$ with $\beta(\Fb) = 2$.\\ By Lemma \ref{depth2} we get that $\image \partial^i \neq 0$ for all $i$. So $\rank \image \partial^i = 1$ for all $i$. As $\ell(H^i(\Fb)) < \infty $ for all $i$ we get that $\rank \ker \partial^i = 1$ for all $i$. Thus $\ker \partial^i $ is isomorphic to an ideal $I_i$ in $A$. Note this isomorphism maps $\image \partial^{i-1} $ to an ideal $J_{i} \subseteq I_i$ of $A$. We note that $J_0$ is generated by $\leq 2$ elements. Say $J_0 = (u,v)$.

  Claim-(i) $H^i(\Fb) \neq 0$ for all $i \in \Z$. \\
  Suppose if possible $H^0(\Fb) = 0$. By Lemma \ref{depth2} we get $H^{-1}(\Fb) \neq 0$. We have an exact sequence
  \[
  0 \rt I_{-1} \rt \Fb^{-1} \xrightarrow{ \partial^{-1}} \Fb^0 \rt \image \partial^0 \rt 0.
  \]
  By depth Lemma we get $I_{-1}$ is free and so $\cong A$. As $J_{-1}$ is generated by $2$ elements we get that $H^{-1}(\Fb)$ is \emph{not} of finite length; a contradiction.
  Thus $H^0(\Fb) \neq 0$. Similarly $H^1(\Fb) \neq 0$. As $\Fb$ is $2$-periodic we get $H^i(\Fb) \neq 0$ for all $i \in \Z$.

  As $A$ is regular local it is in particular a UFD. Suppose $u = au_1$ and $v = av_1$ where $u_1$ and $v_1$ do not have any common factors. \\
  Claim-(ii) $\m \notin \Ass A/J_0$. \\
  If we prove claim (ii) then result follows from claim (i) as $H^0(\Fb)$ is a submodule of $A/J_0$.
  If $u_1 =1$ or $v_1 =1$ then $J_0 = (a)$ and clearly this implies Claim (ii).
  So $u_1 \neq 1$ and $v_1 \neq 1$. Notice $\htt(u_1, v_1) \geq 2$ and so $= 2$. Thus $u_1, v_1$ is an $A$-regular sequence. If $a = 1$ then also clearly the result follows.
Thus also $a \neq 1$. For $p,q \in A$  we write $p\mid q$ if $p$ divides $q$ and $p \nmid q$ otherwise. Choose a prime  element $p \in \m$ such that $p \nmid a$. Suppose if possible $\m \in \Ass A/J_0$. Say $\m = (0 \colon \ov{t})$ where $t \notin J_0$.
So $pt \in J_0 = (au_1, av_1)$. It follows that $a \mid t$. Say $t = at_1$. Then check that $\m t_1 \subseteq (u_1, v_1)$. As $u_1, v_1$ is an $A$-regular sequence we get $t_1 \in (u_1, v_1)$. This implies that $t = at_1 \in J_0$ which is a contradiction. So claim (ii) holds and as discussed before this proves our result.

(3)  Let $x$ be a regular parameter. Then $A/(x)$ is regular local of dimension $3$. Note $\Fb/x \Fb$ satisfies the hypothesis of (2). It follows that $\beta(\Fb) = \beta(\Fb/x \Fb) \geq 3$. We prove that $\beta(\Fb) = 3$ is not possible.

Suppose if possible  there exists $\Fb$ with $\beta(\Fb) = 2$.\\
As all cohomology modules have finite length we have
\[
\rank \image \partial^{i-1} + \rank \ker \partial^{i-1} = \rank \ker \partial^i + \rank \image \partial^i = 3.
\]
By Lemma \ref{depth2} we get that $\image \partial^i \neq 0$ for all $i$. So without any loss of generality we may assume $\rank \image \partial^{-1} = 1$.  As $H^0(\Fb)$ has finite length we get $\rank \ker \partial^0 = 1$.  Thus $\ker \partial^0 $ is isomorphic to an ideal $I$ in $A$. Note this isomorphism maps $\image \partial^{-1} $ to an ideal $J \subseteq I$ of $A$. We note that $J$ is generated by $\leq 3$ elements. Say $J = (u_1, u_2, u_3)$. As $A$ is a UFD we may choose $a$ which is the  greatest common divisor of $u_1, u_2,u_3$. Say $u_i = av_i$  for $i = 1, 2,3$. Set $K = (v_1, v_2,v_3)$. Then $J = aK$ and either $K = A$ or $ 2 \leq \htt K \leq 3$ (the first inequality is easily seen by taking a primary decomposition of $K$).

Claim (iii) $K = A$. \\
Assume the claim for the time being. Then $J \cong A$ is free. So $\depth J = 4$. As $\depth I  \geq 2$ we get that if $H^0(\Fb) \neq 0$ then $\depth H^0(\Fb) \geq 1$; which is a contradiction. So $H^0(\Fb) = 0$. So $I = J \cong A$. By  Lemma \ref{depth2} we get $H^1(\Fb) \neq 0$.  We have an exact sequence
\[
0 \rt I \rt \Fb^0 \rt \image \partial^0 \rt 0.
\]
Counting depths we have $\depth \image \partial^0 \geq 3$. Also $\depth \ker \partial^1 \geq 2$. So \\  $\depth H^1(\Fb) \geq 2$ which is a contradiction. Thus it suffices to prove Claim (iii).

\emph{Proof of Claim (iii):} Suppose if possible $K \neq A$. Let $P$ be a minimal prime of $K$. We have $2 \leq \htt P \leq 3$. Note $\Fb_P$ is exact. As $A_P$ is regular local we get $J_P$ is free. In particular $J_P$ is principal. But $J_P = (a)_P K_P$ is definitely not principal as $\htt K_P \geq 2$. So claim (iii) is true and as shown earlier this implies our result.
\end{proof}

\begin{remark}
  We do not know an example of a $2$-periodic complex $\Fb$ over a regular local ring of dimension $4$ such that $\beta(\Fb) = 4$.
\end{remark}
Finally we show:
\begin{theorem}
\label{exp}
Let $(A,\m)$ be a regular local ring of dimension $d \geq 1$. Then there exists a $2$-periodic minimal complex $\Fb$ with finite length cohomology such that
$\beta(\Fb) = 2^d$.
\end{theorem}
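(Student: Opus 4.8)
The plan is to build the required complex explicitly by folding the Koszul complex of a regular system of parameters into a constant (hence $2$-periodic) complex. First I would fix a regular system of parameters $\bx = x_1,\ldots,x_d$ of $A$; since $A$ is regular local this is an $A$-regular sequence. Let $(K_\bullet,\partial^K)$ be the Koszul complex $K_\bullet(\bx;A)$, so $K_i$ is free of rank $\binom{d}{i}$ and $(\partial^K)^2=0$. Set $L=\bigoplus_{i=0}^{d}K_i$, a finitely generated free $A$-module of rank $\sum_{i=0}^d\binom{d}{i}=2^d$, and let $\delta\colon L\rt L$ be the Koszul differential regarded as an endomorphism of $L$, i.e. $\delta|_{K_i}=\partial^K_i\colon K_i\rt K_{i-1}$ (with $\delta|_{K_0}=0$). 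Then $\delta^2=0$, every entry of $\delta$ with respect to the evident basis is of the form $\pm x_j$ and hence lies in $\m$, and $\delta\neq 0$ because $d\geq 1$. Define $\Fb$ by $\Fb^n=L$ and $\partial^n_{\Fb}=\delta$ for all $n\in\Z$. Then $\Fb$ is a nonzero complex of finitely generated free $A$-modules, $\Fb=\Fb[2]$, it is minimal since $\delta(L)\subseteq\m L$, and $\rank\Fb^n=2^d$ for every $n$, so $\beta(\Fb)=2^d$.

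The only real content is the computation of $H^\bullet(\Fb)$, and here I would argue as follows. Since $\delta$ respects the grading $L=\bigoplus_i K_i$ (it drops the Koszul degree by one), $\ker\delta=\bigoplus_{i=0}^d\ker\partial^K_i$ and $\image\delta=\bigoplus_{i=1}^d\image\partial^K_i$, where $\image\partial^K_i\subseteq K_{i-1}$. Now $\bx$ is a regular sequence, so $K_\bullet$ is acyclic in positive homological degrees: $\ker\partial^K_i=\image\partial^K_{i+1}$ for all $i\geq 1$, while $\ker\partial^K_0=K_0=A$ and $\image\partial^K_1=(x_1,\ldots,x_d)=\m$ inside $K_0$. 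Comparing the two direct sums summand by summand inside each $K_i$: for $i\geq 1$ the part of $\ker\delta$ lying in $K_i$ equals $\image\partial^K_{i+1}$, which is exactly the part of $\image\delta$ lying in $K_i$; and the part lying in $K_0$ is $A$ for $\ker\delta$ and $\m$ for $\image\delta$. Hence
\[
H^n(\Fb)=\ker\delta/\image\delta\cong A/\m=k\qquad(n\in\Z),
\]
and these modules have finite length. This shows $\Fb$ satisfies all the requirements of Theorem \ref{exp}.

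I do not expect a serious obstacle once one hits on the idea of \emph{folding the Koszul resolution of $k$ into a constant complex}: the $2$-periodicity and minimality are then immediate, and the cohomology computation is just the statement that collapsing the Koszul complex this way kills everything except $H_0(K_\bullet)=k$. The one step to handle carefully is the bookkeeping in that computation, namely matching the tails of the two direct sums after the reindexing $i\mapsto i+1$. It is also worth noting that this is consistent with the lower-rank examples already constructed: the coarser ``even part versus odd part'' fold of $K_\bullet$ yields $\beta=2^{d-1}$, and the explicit complexes for $d=1,2,3$ have $\beta=1,2,3$, in line with Conjecture \ref{mine}.
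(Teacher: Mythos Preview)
Your argument is correct. The folded Koszul complex does exactly what is needed: $\delta^2=0$ gives a complex, the entries $\pm x_j$ give minimality, the rank count gives $\beta(\Fb)=2^d$, and your graded computation of $\ker\delta/\image\delta\cong\bigoplus_j H_j(K_\bullet(\bx;A))=k$ is clean and correct.

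The paper proceeds quite differently. It passes to the auxiliary hypersurface $B=A[Y]/(Y^2)$, takes the complete resolution $\Gb$ of $\Om_B^d(k)$, and sets $\Fb=\Gb\otimes_B A$; the fact that this is $2$-periodic with finite length cohomology is an instance of the general machinery behind Theorem~\ref{periodic-complex-exam} and Corollary~\ref{vb-cor}. The rank computation is then reduced to showing $\beta_d^B(k)=2^d$, which is done via a change-of-rings formula of Takahashi after killing the regular sequence $\bx$ in $B$. Your construction is considerably more elementary and self-contained: no auxiliary ring, no complete resolutions, no Betti-number identities. What the paper's route buys is consistency with the general framework it has already set up --- the complex it produces is visibly of the form $\eta(X)$ for $X\in\CMS^0(B)$ --- whereas your complex is produced ad hoc but with essentially no overhead. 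Your closing remark about the even/odd fold yielding $\beta=2^{d-1}$ is also correct (and in fact improves on $2^d$ for $d\geq 2$, though that is not what the theorem asks for).
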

\begin{proof}
  Set $(B,\n) = (A[Y]/(Y^2), (\m, Y))$ and $k = A/\m = B/\n$.
  Let $\Gb $ be a minimal complete resolution of $\Om^d_B(k)$. Then same argument as in proof of Theorem \ref{periodic-complex-exam} shows that $\Fb = \Gb \otimes_B A$ is a minimal $2$-periodic complex with finite length cohomology. Thus it suffices to prove $\beta(\Fb) = \beta(\Gb) = 2^d$. Equivalently it suffices to prove that $\beta_i(\Om^d_B(k)) = 2^d$ for all $i$. As $\Om^d_B(k)$ does not have a free module as a direct summand (see \cite[1.3]{D})
it suffices to show $\mu(\Om^d_B(k)) = \beta_d^B(k) = 2^d$.

   Let $\bx = x_1,\ldots, x_d$ be a regular system of parameters of $A$. Then note that $\bx$  considered in $B$ is a regular sequence in $\n \setminus \n^2$. Set $C = B/\bx B$.
   By \cite[5.3]{T} we get
   \[
   \beta_d^B(k)  = \sum_{i= 0}^{d} \binom{d}{i} \beta_i^C(k).
   \]
   Now notice $C = k[Y]/(Y^2)$. Then the minimal resolution of $k$ over $C$ is given as:
   \[
   \cdots \rt C \xrightarrow{Y}  C \xrightarrow{Y}  C \xrightarrow{Y} k \rt 0.
   \]
   So $\beta_i^C(k) = 1$ for all $i$. It follows that $\beta_d^B(k) = 2^d$.
\end{proof}

\end{document}